\theoremstyle{plain}
\newtheorem{thm}{Theorem}[section]
\newtheorem{lem}[thm]{Lemma}
\newtheorem{cor}[thm]{Corollary}
\newtheorem{prop}[thm]{Proposition}
\newtheorem{question}[thm]{Question}
\theoremstyle{definition}
\newtheorem{defn}[thm]{Definition}
\newtheorem{exmp}[thm]{Example}
\theoremstyle{definition}
\newtheorem{note}[thm]{Note}
\newtheorem{rk}[thm]{Remark}
\newcommand{\bC}{{\mathbb C}}
\newcommand{\bF}{{\mathbb F}}
\newcommand{\bN}{{\mathbb N}}
\newcommand{\bP}{{\mathbb P}}
\newcommand{\bQ}{{\mathbb Q}}
\newcommand{\bR}{{\mathbb R}}
\newcommand{\bS}{{\mathbb S}}
\newcommand{\bZ}{{\mathbb Z}}
\newcommand{\cA}{{\mathcal A}}
\newcommand{\cC}{{\mathcal C}}
\newcommand{\cL}{{\mathcal L}}
\newcommand{\cM}{{\mathcal M}}
\newcommand{\cN}{{\mathcal N}}
\newcommand{\cU}{{\mathcal U}}
\newcommand{\Q}{\mathbb{Q}}
\newcommand{\sheafhom}{\mathcal{H} \kern -.5pt \mathit{om}}
\newcommand{\derivedsheafhom}{\mathcal{R}\mathcal{H} \kern -.5pt \mathit{om}}
\newcommand{\colim}{\operatornamewithlimits{colim}\limits}
\newcommand{\tate}[1]{\widehat{#1}_{S^1}^*}
\newcommand{\Sp}{\operatorname{Sp}}
\newcommand{\Fun}{\operatorname{Fun}}
\newcommand{\Top}{\operatorname{Top}}
\numberwithin{equation}{section}
\numberwithin{figure}{section}
\title[Generalized homology from Floer homology]{Recovering generalized homology from Floer homology: the complex oriented case}
\author{Laurent C\^{o}t\'{e}}
\address{Universit\"at Bonn, Mathematical Institute, Bonn, Germany}
\email{lcote@math.uni-bonn.de}
\author{Yusuf Bar{\i}\c{s} Kartal}
\address {Department of Mathematics, National University of Singapore, Singapore}
\email {ybkartal@nus.edu.sg}
\date{}
\begin{document}
\begin{abstract}
We associate an invariant called the completed Tate cohomology to a filtered circle-equivariant spectrum and a complex oriented cohomology theory. We show that when the filtered spectrum is the spectral symplectic cohomology of a Liouville manifold, this invariant depends only on the stable homotopy type of the underlying manifold. We make explicit computations for several complex oriented cohomology theories, including Eilenberg--Maclane spectra, Morava K-theories, their integral counterparts, and complex K-theory. We show that the result for Eilenberg--Maclane spectra depends only on the rational homology, and we use the computations for Morava K-theory to recover the integral homology (as an ungraded group). In a different direction, we use the completed Tate cohomology computations for the complex K-theory to recover the complex K-theory of the underlying manifold from its equivariant filtered Floer homotopy type. A key Floer theoretic input is the computation of local equivariant Floer theory near the orbit of an autonomous Hamiltonian, which may be of independent interest.
\end{abstract}

\maketitle

\section{Introduction}
\subsection{Context} Given a Liouville manifold $M$, one can associate to it an invariant $SH(M)$ called \emph{the symplectic cohomology}. This is a version of Hamiltonian the Floer cohomology for open manifolds. It depends implicitly on a choice of coefficient ring as well as (purely topological) grading and orientation data.

In contrast to Hamiltonian Floer cohomology on closed manifolds, symplectic cohomology does not in general remember any information about the homotopy type of $M$. For example, any subcritical Weinstein manifold has vanishing symplectic cohomology. In particular, if $M= N \times \mathbb{C}$ for a Liouville manifold $N$, then it follows from \cite{cieliebak2002handle} that $SH(M)=0$. Therefore, the topology of $M$ can be arbitrarily complicated, while the symplectic cohomology vanishes.  

However, conjectures of Treumann \cite{treumann2019complex} inspired  by string theory and mirror symmetry suggest that the topological $K$-theory of the underlying manifold should be determined by Floer theoretic invariants such as the wrapped Fukaya category. This is impossible as we have seen above: vanishing symplectic cohomology implies vanishing wrapped Fukaya category. However, Treumann suggests that once the extra data of an \emph{action filtration} is remembered, one can reconstruct complex $K$-theory from these invariants.\footnote{Strictly speaking, Treumann's conjecture is concerned only with cotangent bundles \cite[p.\ 15]{treumann2019complex}.} 

Separately, the work of Albers--Cieliebak--Frauenfelder \cite{albers2016symplectic} and independently Zhao \cite{zhaoperiodic} shows that it is possible to recover rational information about the topology of $M$ by remembering extra structures on symplectic cochains. The relevant structures are (i) the $S^1$-action corresponding to ``loop rotation'' and (ii) the action filtration. \cite{albers2016symplectic} and \cite{zhaoperiodic} use these structures to build a version of ``symplectic Tate (co)homology'' with respect to the $S^1$-action. They prove that the resulting theory recovers the \emph{rational} cohomology of $M$; however, it is insensitive to torsion information. In particular, most of the information about the homology and the complex $K$-theory of $M$ is lost.

The goal of the present paper is to use methods of equivariant (Floer) homotopy theory (\cite{cotekartalequivariantfloerhomotopy}) and chromatic homotopy theory to recover much more information about the stable homotopy type of a Liouville manifold $M$. Surprisingly, one can recover from the filtered $S^1$-equivariant symplectic cohomology of $M$ over the sphere spectrum, not only the integral homology and the complex $K$-theory, but also the Morava $K$-theories of $M$, which can be seen as the building blocks of the entire stable homotopy type of $M$ (see \cite{barthelbeaudrychromatic} for a broad overview of the subject).  
%
%
%
%
%
\subsection{Extra structures on spectral symplectic cohomology}  Classically, symplectic cohomology is constructed as a colimit of Floer cohomology groups associated to a sequence of Hamiltonians with slope increasing to infinity. In \cite{largethesis}, Large lifted this construction to the sphere spectrum $\bS$ under appropriate topological assumptions. 

In \cite{cotekartalequivariantfloerhomotopy}, the authors further defined an $S^1$-equivariant model $SH_S(M,\bS)$ for symplectic cohomology, where the $S^1$-action corresponds to ``loop rotation''. Analogously to \cite{largethesis}, the authors first construct an $S^1$-equivariant spectrum  $HF_S(H,\bS)$ associated to a (family of) Hamiltonian(s) linear at infinity, and take a colimit as the slope goes to infinity. 

By choosing a cofinal sequence of Hamiltonians, one can naturally endow $SH_S(M,\bS)$ with the structure of a \emph{filtered equivariant spectrum}, a notion which will be made precise in \Cref{sec:filteredeqspec}. While the filtration depends on the choice of the sequence, two such sequences produce equivalent filtered spectra. We emphasize that $SH_S(M,\bS)$, viewed as a filtered equivariant spectrum, is a true invariant of $M$ as a Liouville manifold: it does not depend on any additional data such as  e.g.\ a Hamiltonian or a contact-type hypersurface. 

In summary, one can endow spectral symplectic cohomology with both an $S^1$-action corresponding to loop rotation, and with a (suitably interpreted) action filtration. 

\subsection{Summary of results}
To a complex oriented cohomology theory $R$ and a filtered $S^1$-equivariant spectrum $X$, we shall associate an invariant $\tate{R}(X)$ which we call the \emph{completed Tate cohomology}. The central result of this paper is the following computation:
\begingroup
\def\thethm{\ref*{thm:tatecohforsh}}
\begin{thm}
If $M$ is a stably framed Liouville manifold and $R$ is a complex oriented ring spectrum,
\begin{equation}
    \tate{R}(SH_S(M,\bS)) \simeq \tate{R}(\Sigma^\infty (M/\partial_\infty M)) 
\end{equation}
where $\Sigma^\infty (M/ \partial_\infty M)$ is endowed with the trivial filtration and trivial $S^1$ action.
\end{thm}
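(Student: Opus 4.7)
My plan is to mimic the Albers--Cieliebak--Frauenfelder/Zhao argument in the equivariant homotopy setting, using the action filtration and the complex orientation of $R$ to show that only the constant orbit stratum survives completed Tate cohomology.

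First, I would choose a cofinal sequence of Hamiltonians $\{H_n\}$ where $H_n$ equals a fixed small $C^2$-small Morse function $f$ on a compact domain $M_0\subset M$ and is autonomous and linear of slope $\lambda_n\to\infty$ on the cylindrical end, with the slope Hamiltonian Morse--Bott nondegenerate along $S^1$-families coming from Reeb orbits. For such $H_n$ the 1-periodic orbits split cleanly into constants (critical points of $f$ in $M_0$) and nonconstant $S^1$-families; these strata are separated by action. By the construction of \cite{cotekartalequivariantfloerhomotopy}, $HF_S(H_n,\bS)$ is a filtered $S^1$-equivariant spectrum whose associated graded splits as a wedge over these orbit strata, and $SH_S(M,\bS)=\colim_n HF_S(H_n,\bS)$ in filtered equivariant spectra.

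Second, I would identify the two pieces of the associated graded. The constant-orbit stratum carries the trivial $S^1$-action, and, since $M$ is stably framed, standard Morse-theoretic arguments identify it with the filtered Thom spectrum of the tangent bundle over the Morse CW structure of $M$, which is $\Sigma^\infty(M/\partial_\infty M)$ up to a canonical (stable) shift. For the nonconstant stratum I would invoke the local equivariant Floer computation advertised in the abstract: near a Morse--Bott $S^1$-family of transverse Reeb orbits with covering multiplicity $k$, the local piece is $\Sigma^\infty_+(S^1/(\Z/k))$ smashed with a Thom spectrum for a \emph{complex} $S^1$-representation $V$ (the normal symplectic bundle of the orbit in $M$ together with the Conley--Zehnder data). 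Complex orientability of the normal bundle is automatic because the linearization of the Hamiltonian flow lives in $U(n)$.

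Third, I would compute $\tate{R}$ on each stratum. On the constant stratum with trivial $S^1$-action, $\tate{R}$ reduces to $\tate{R}(\Sigma^\infty(M/\partial_\infty M))$, which is exactly the right-hand side of the theorem. For a nonconstant stratum the local piece is, after smashing with $R$, of the form $R\wedge \Sigma^\infty_+(S^1/(\Z/k))\wedge \mathrm{Th}(V)$ with $V$ a nontrivial complex representation. The Tate construction $\widehat{R}^{S^1}_*$ of an induced spectrum $R\wedge\Sigma^\infty_+(S^1/(\Z/k))$ vanishes by a standard free-action/transfer argument (for $k=1$ the $S^1$-action is free, for $k>1$ one uses the $\Z/k$-Euler class in $R$, which is invertible in the Tate spectrum since $R$ is complex oriented). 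Smashing further with $\mathrm{Th}(V)$ preserves this vanishing because $V$ has a complex structure and $R$ is complex oriented, giving a Thom isomorphism that identifies $\mathrm{Th}(V)$ with an equivariant suspension. Thus each nonconstant stratum is Tate-acyclic for $R$.

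Fourth, I would assemble these stratum computations into a statement about $HF_S(H_n,\bS)$, and then pass to the limit. Because the filtration on $SH_S(M,\bS)$ is exhaustive and the non-constant strata accumulate only at action $+\infty$, and because \emph{completed} Tate cohomology is by design compatible with completion along the filtration, the nonconstant contributions are killed in the limit, and only the constant-stratum term survives. This gives the desired equivalence $\tate{R}(SH_S(M,\bS))\simeq \tate{R}(\Sigma^\infty(M/\partial_\infty M))$.

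The main obstacle will be Paragraph~3, i.e.\ promoting the vanishing of Tate cohomology on the local equivariant Floer stratum from a heuristic statement about induced $S^1$-spectra to a rigorous statement at the level of filtered spectra. One has to correctly identify the genuine $S^1$-equivariant local model (not just its underlying spectrum), check that the twisting $V$ really carries a complex structure compatible with the complex orientation of $R$, and verify that the Tate construction commutes with the colimit in the cofinal sequence in the completed sense. The complex orientation of $R$ is used essentially both to invert the $\Z/k$-Euler class on multiply-covered orbits and to obtain the equivariant Thom isomorphism for $V$; without it, only a rational statement (as in \cite{albers2016symplectic, zhaoperiodic}) would be available.
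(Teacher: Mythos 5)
Your overall strategy matches the paper's: choose approximately autonomous Hamiltonians so the action filtration on $HF_S(H,\bS)$ has the constant stratum $\Sigma^\infty(M/\partial_\infty M)$ (via an equivariant PSS map) and nonconstant strata of the form $(S^1/C_k)^\nu$ for virtual $S^1$-equivariant bundles $\nu$, then kill the nonconstant strata after applying $\tate{R}$. But your Paragraph~3 has a gap in how it justifies that killing, and the gap is exactly where the novelty of this paper sits. For $k>1$ the $S^1$-action on $S^1/C_k$ is not free, and inverting only $u=[1]_R(u)$ (the \emph{ordinary} Tate construction) does not in general annihilate $R^*(BC_k)$: for $R=H\bZ$, $[k]_R(u)=ku$, which is not invertible after inverting $u$. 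The paper's $\tate{R}$ is by definition the $u$-adically completed localization at the \emph{entire set} $\mathfrak{F}_R=\{[1]_R(u),[2]_R(u),\dots\}$, so $[k]_R(u)$ is inverted by fiat, not "since $R$ is complex oriented." Complex orientability is needed only to \emph{write down} $[k]_R(u)$ via the formal group law. Indeed the theorem would be false for the ordinary Tate: $[p]_{H\bF_p}(u)=0$, and correspondingly the paper proves $\tate{H\bF_p}(SH_S(M,\bS))=0$. You have not pinned down the definition of the left-hand side, and without the $\mathfrak{F}_R$-localization the argument does not close.

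A secondary issue: you invoke an equivariant Thom isomorphism for the twist $\nu$, asserting $\nu$ has a compatible complex structure so that $R$-orientability handles it. This is both unnecessary and delicate (equivariant Thom isomorphisms for complex-oriented $R$ need more than a complex structure on $V$). The paper instead observes that $(S^1/C_k)^\nu$ is a comodule over $\Sigma^\infty(S^1/C_k)_+$ via the diagonal, so $R^*(((S^1/C_k)^\nu)_{hS^1})$ is a module over $R^*((S^1/C_k)_{hS^1})\simeq R^*(BC_k)$, on which $[k]_R(u)$ acts by zero by a Gysin-sequence argument; inverting $[k]_R(u)$ then kills the whole module, for \emph{any} virtual $\nu$. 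Finally, the nonconstant contributions are killed \emph{for each fixed finite-slope Hamiltonian}, before any passage to the colimit over slopes, which is how the paper actually assembles the filtration argument --- the outer $\lim_k$ in the definition of $\tate{R}$ is then harmless because each term is already identified with $\tate{R}(\Sigma^\infty(M/\partial_\infty M))$ compatibly with the PSS map.
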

\addtocounter{thm}{-1}
\endgroup
In other words, $\tate{R}(SH_S(M,\bS))$ only depends on the homotopy type of $M$. Moreover, as we will see, $\tate{R}(\Sigma^\infty (M/\partial_\infty M))$ is a good approximation to $R^*(M,\partial M)$ for many $R$, including Morava $K$-theories. We refer to \Cref{subsection:proof-summary} for a summary of the proof of \Cref{thm:tatecohforsh}.

Let us briefly explain the definition of $\tate{R}(-)$, postponing the details to \Cref{subsection:completed-tate}. First of all, recall that the complex orientation on $R$ determines an isomorphism $R^*[[u]]=R^*(BS^1)$ (where $R^*=R^*(pt)$), and a formal group law $F_R\in R^*[[u_1,u_2]]$. The element $[n]_R(u)\in R^*[[u]]$ is, by definition, obtained by iterating $u$ under $F_R$, $n$ times. Given an $S^1$-equivariant filtered spectrum $X$ with filtration $X \simeq \colim(X_1\to X_2\to \dots)$, the completed Tate cohomology $\tate{R}(X)$ is defined as the limit of the $R^*((X_i)_{hS^1})$ localized at each $[n]_R(u)$ and $u$-adically completed. 

We compute $\tate{R}(SH_S(M,\bS))$ explicitly in a variety of examples. For the Eilenberg--Maclane spectra $R=H\bQ$, $H\bZ$, $H\bF_p$, we have the following analogues of \cite[Thm.\ 1.1(c) \& Sec.\ 5.1]{albers2016symplectic} and \cite[Thm.\ 1.1 \& Sec.\ 8]{zhaoperiodic}:
\begingroup
\def\thethm{\ref*{cor:eilenbergmaclanetatesh}}
\begin{cor}
We have $\tate{H\bQ}(SH_S(M,\bS))\simeq \tate{H\bZ}(SH_S(M,\bS))\simeq H_{2n-*}(M,\bQ((u)))$. On the other hand  $\tate{H\bF_p}(SH_S(M,\bS))\simeq 0$.
\end{cor}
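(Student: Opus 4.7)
The plan is to apply Theorem~\ref{thm:tatecohforsh}, which reduces each computation to $\tate{R}(\Sigma^\infty(M/\partial_\infty M))$ for the trivial filtration and trivial $S^1$-action. With the filtration trivial, the inverse system in the definition of $\tate{R}$ is eventually constant, so it suffices to compute the $u$-adic completion of the localization of $R^*(X_{hS^1})$ at the multiplicative system generated by $\{[n]_R(u) : n \geq 1\}$. With the $S^1$-action trivial, $(\Sigma^\infty(M/\partial_\infty M))_{hS^1} \simeq \Sigma^\infty(M/\partial_\infty M) \wedge BS^1_+$, and since $\Sigma^\infty(M/\partial_\infty M)$ is a finite spectrum (Liouville manifolds are of finite type), one has $R^*(X_{hS^1}) \cong R^*(M/\partial_\infty M)[[u]]$.

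From here the three cases are purely algebraic. For $R = H\bQ$, the formal group law is additive so $[n]_{H\bQ}(u) = nu$; since each $n$ is already a unit, localizing amounts to inverting $u$, and the resulting ring $H^*(M/\partial_\infty M; \bQ)((u))$ is already $u$-adically complete. For $R = H\bZ$, again $[n]_{H\bZ}(u) = nu$, and inverting all such elements forces the inversion of every positive integer together with $u$; since $H^*(M/\partial_\infty M; \bZ)$ is finitely generated, this yields $H^*(M/\partial_\infty M; \bZ) \otimes_\bZ \bQ((u)) \cong H^*(M/\partial_\infty M; \bQ)((u))$, the same answer. Finally, Poincaré--Lefschetz duality for the stably framed Liouville manifold $M$ of real dimension $2n$ gives $H^*(M/\partial_\infty M; \bQ) \cong H_{2n-*}(M; \bQ)$, converting the answer into $H_{2n-*}(M; \bQ((u)))$.

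For $R = H\bF_p$ the formal group law remains additive, but now $[p]_{H\bF_p}(u) = pu = 0$ in $\bF_p[[u]]$. Localizing a commutative ring at a multiplicative set containing $0$ produces the zero ring, so $\tate{H\bF_p}(\Sigma^\infty(M/\partial_\infty M)) \simeq 0$.

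The argument is essentially bookkeeping once Theorem~\ref{thm:tatecohforsh} is in hand, so there is no serious obstacle. The only minor care points are the identification $R^*(X \wedge BS^1_+) \cong R^*(X)[[u]]$ (for which finiteness of the spectrum suffices) and the interaction of localization with $u$-adic completion in the integer-coefficient case; both are routine, and all of the substantive input is carried by the main theorem and the additivity of the formal group law for Eilenberg--Maclane spectra.
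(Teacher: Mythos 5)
Your proof is correct and follows essentially the same route as the paper: apply \Cref{thm:tatecohforsh} to reduce to the trivial-action/trivial-filtration case, then use the additive formal group law $[n]_{HA}(u)=nu$ to identify the localization (this is exactly the content of \Cref{exmp:eilenmac}, to which the paper simply defers). The only cosmetic difference is that you spell out the algebra that the paper packages into the example, including the (correct) observation that for $H\bF_p$ one localizes at a set containing $0$.
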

\addtocounter{thm}{-1}
\endgroup

Note that for a commutative ring $A$, $\tate{HA}(SH_S(M,\bS))$ depends only on the $S^1$-equivariant (co)filtered homotopy type of $SH^*(M,A)$, which does not require Floer homotopy theory to define. For $A=\bQ$, this reaffirms the conclusion of \cite{zhaoperiodic, albers2016symplectic} that the symplectic cohomology with its $S^1$-action and filtration recovers the rational cohomology. For $A=\bZ$ this can be interpreted as a generalization of the computations for a disc and annulus in \cite[Sec.\ 8]{zhaoperiodic}. In this way, we lose the torsion information. See also \cite[Sec.\ 5.1]{albers2016symplectic} for related computations.

To access the torsion in the homology of $M$, we compute $\tate{R}(SH_S(M,\bS))$ for the (generalized) Morava $K$-theories $K_{p^k}(m)$:
\begingroup
\def\thethm{\ref*{cor:deformedmoravash}}
\begin{cor}
$\tate{K_{p^k}(m)}(SH_S(M,\bS))\simeq K_{p^k}(m)_{2n-*} (M)((u))$.
\end{cor}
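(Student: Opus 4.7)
The plan is to apply \Cref{thm:tatecohforsh} and reduce the calculation to a direct computation on the underlying spectrum. By the main theorem,
\[
\tate{K_{p^k}(m)}(SH_S(M,\bS)) \simeq \tate{K_{p^k}(m)}(\Sigma^\infty(M/\partial_\infty M)),
\]
with the right-hand side carrying the trivial filtration and trivial $S^1$-action. So it suffices to evaluate $\tate{R}(Y)$ on $R = K_{p^k}(m)$ for the trivially structured spectrum $Y = \Sigma^\infty(M/\partial_\infty M)$.

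Next I would compute $\tate{R}(Y)$ for such a trivially structured $Y$. The trivial filtration collapses the limit in the definition of $\tate{R}$ to a single term, and the trivial $S^1$-action gives $Y_{hS^1} \simeq Y \wedge BS^1_+$. Using the complex orientation of $R$, one has $R^*(Y \wedge BS^1_+) \cong R^*(Y)[[u]]$, either from a direct Milnor exact sequence / Atiyah--Hirzebruch computation when $R^*(Y)$ is of finite type (which holds for a Liouville manifold), or as a completed tensor product in general. For the $[n]_R(u)$-localizations: when $(n,p) = 1$, $[n]_R(u)$ has invertible leading coefficient $n$ and is already a unit in $R^*[[u]]$; when $n = p^j$, the height-$m$ property of the formal group law forces $[p^j]_R(u)$ to be congruent to a unit multiple of $u^{p^{jm}}$ modulo $(p, u^{p^{jm}+1})$, so after $u$-adic completion the localization at $[p^j]_R(u)$ collapses to inverting $u$. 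Hence the entire localize-then-complete procedure simply produces
\[
\tate{R}(Y) \simeq R^*(Y)((u)).
\]

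Finally, to convert $R^*(M/\partial_\infty M)$ into $R_{2n-*}(M)$, I would invoke Atiyah / Poincar\'e--Lefschetz duality: since $M$ is stably framed of real dimension $2n$, the tangent bundle is stably trivial, so $\Sigma^\infty(M/\partial_\infty M)$ is Spanier--Whitehead dual to $\Sigma^{-2n}\Sigma^\infty M_+$, and consequently $R^*(M/\partial_\infty M) \cong R_{2n-*}(M)$ for any $R$. Combining the two identifications yields the asserted formula. The main technical obstacle is the invertibility analysis of $[p^j]_R(u)$ in the integral lift $K_{p^k}(m)$, where the formal group law is more complicated than in the classical mod-$p$ Morava $K$-theory case; this should nonetheless follow from a Weierstrass-preparation argument applied to the $u$-adically complete ring $R^*[[u]]$, with the dominant term controlled by the $v_m u^{p^{jm}}$ contribution inherited from the mod-$p$ reduction.
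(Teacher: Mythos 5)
Your proof follows essentially the same route as the paper: reduce via \Cref{thm:tatecohforsh} to the trivially filtered, trivially acted-on spectrum $\Sigma^\infty(M/\partial_\infty M)$, then show that for $R = K_{p^k}(m)$ inverting all $[n]_R(u)$ amounts to inverting $u$ (which is exactly \Cref{exmp:deformedmorava}), and finish with Poincar\'e duality. One small slip: when $(n,p)=1$, $[n]_R(u)=nu+\cdots$ is \emph{not} already a unit in $R^*[[u]]$ (it vanishes modulo $u$), but rather $u$ times a unit; the conclusion that the whole localization collapses to inverting $u$ still stands, and this is precisely how the paper phrases it.
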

\addtocounter{thm}{-1}
\endgroup
\Cref{cor:eilenbergmaclanetatesh} and \Cref{cor:deformedmoravash} also explain the term ``completed Tate cohomology'' (normally, to define Tate cohomology, one would localize $R^*((X_i)_{hS^1})$ at $[1]_R(u)=u$ only). For $R=H\bQ$, $K_{p^k}(m)$, this produces the same answer.

This can be used to show
\begingroup
\def\thethm{\ref*{thm:hlgrecovery}}
\begin{thm}
For $2(p^m-1)>dim_\bR(M)$ (or for $4(p^m-1)>dim_\bR(M)$ for Weinstein $M$), the groups $\tate{K_{p^k}(m)}(SH_S(M,\bS))$ and $H_{2n-*}(M,\bZ[v_m,v_m^{-1}]/p^k)((u))$ are isomorphic. 
\end{thm}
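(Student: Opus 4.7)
\emph{Proof plan.} The strategy is to combine \Cref{cor:deformedmoravash} with an Atiyah--Hirzebruch spectral sequence (AHSS) collapse argument. By \Cref{cor:deformedmoravash} one already has
\[
\tate{K_{p^k}(m)}(SH_S(M,\bS)) \;\simeq\; K_{p^k}(m)_{2n-*}(M)((u)),
\]
so it suffices to establish a natural isomorphism
$K_{p^k}(m)_*(M)\cong H_*(M;\bZ[v_m,v_m^{-1}]/p^k)$
under the stated dimension hypothesis, after which the factor $((u))$ is restored tautologically.

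For this I would invoke the AHSS
\[
E^2_{p,q}=H_p(M;\pi_q K_{p^k}(m)) \;\Longrightarrow\; K_{p^k}(m)_{p+q}(M),
\]
whose differentials have bidegree $(-r,r-1)$. Since the coefficient ring $\pi_* K_{p^k}(m)=\bZ/p^k[v_m,v_m^{-1}]$ is concentrated in degrees divisible by $2(p^m-1)$, a differential $d_r$ can connect nonzero entries only when $r-1$ is a positive multiple of $2(p^m-1)$; hence the first possibly nontrivial differential is $d_{2p^m-1}$. The dimension hypothesis is tailored precisely to annihilate it: a Liouville manifold of real dimension $2n$ has $H_p(M)=0$ for $p>2n$, while a Weinstein manifold of the same dimension additionally satisfies $H_p(M)=0$ for $p>n$, since it admits a plurisubharmonic Morse function with critical points of index $\leq n$. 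Under the assumption $2(p^m-1)>2n$ (respectively $4(p^m-1)>2n$), for every $p$ with $H_p(M)\neq 0$ the target degree $p-(2p^m-1)$ is negative, so $d_{2p^m-1}=0$; the same reasoning kills every higher $d_r$. Therefore the AHSS collapses at $E^2$, identifying the associated graded of $K_{p^k}(m)_*(M)$ with $H_*(M;\bZ/p^k[v_m,v_m^{-1}])$.

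The main remaining obstacle, which I expect to be the delicate point, is the potential extension problem when $k\geq 2$: collapse of the AHSS yields only an associated-graded identification, and $\bZ/p^k$-module extensions need not split a priori. I would resolve this in two steps. First, in the base case $k=1$ all modules in sight are $\bF_p$-vector spaces, so the collapse immediately yields the desired isomorphism. Second, I would bootstrap from $k=1$ to general $k$ via a Bockstein-type induction, comparing the AHSSes for successive $K_{p^\ell}(m)$ via the cofiber sequences relating them at the level of spectra; the dimension bound forces the relevant connecting operations to vanish for degree reasons, so the associated-graded identification upgrades to a genuine isomorphism of $\bZ/p^k$-modules.
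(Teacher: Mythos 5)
Your proof is essentially the same as the paper's: both invoke \Cref{cor:deformedmoravash} and then collapse an Atiyah--Hirzebruch spectral sequence using the sparseness of $\pi_*K_{p^k}(m)$ together with the dimension bound. The paper runs the cohomological AHSS for the pair $(M,\partial_\infty M)$ and then passes to homology by Poincar\'e--Lefschetz duality; you run the homological AHSS for $M$ directly, which is a harmless reformulation since \Cref{cor:deformedmoravash} already lands you in $K_{p^k}(m)$-homology.

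The one place where your write-up adds unnecessary and somewhat shaky machinery is the proposed Bockstein induction on $k$ to handle extensions. This is not needed, and the correct resolution is much simpler and is exactly what the dimension bound buys you: in each total degree $\ell$ the $E^\infty$-page has at most one nonzero entry, so there is no extension problem at all, for any $k$. Indeed, a contributing bidegree $(p,q)$ with $p+q=\ell$ must satisfy $H_p(M)\neq 0$, so $p$ lies in an interval of length $2n$ (or $n$ in the Weinstein case), and $q=\ell-p$ must be divisible by $2(p^m-1)$. As $p$ ranges over its allowed interval, $q$ ranges over an interval of length $2n$ (resp.\ $n$), which is strictly shorter than the period $2(p^m-1)$ (resp.\ $< 2(p^m-1)$, since $4(p^m-1)>2n$ gives $2(p^m-1)>n$); hence at most one value of $q$ is a multiple of the period. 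So the filtration on $K_{p^k}(m)_\ell(M)$ has a single jump and $K_{p^k}(m)_\ell(M)\cong H_{p_0}(M;\pi_{q_0}K_{p^k}(m))$ for the unique such $(p_0,q_0)$. This also makes the identification canonical, which matters for the naturality in $k$ used in \Cref{cor:fullhlgrecovery}. Your proposed ``connecting operations vanish for degree reasons'' argument would need to be made precise, and you would have to track compatibility of the AHSS filtrations along the maps $K_{p^{\ell+1}}(m)\to K_{p^\ell}(m)$; the observation above sidesteps all of that.
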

\addtocounter{thm}{-1}
\endgroup
Combining \Cref{thm:hlgrecovery} with the universal coefficients theorem, we show that the integral cohomology can be fully recovered:
\begingroup
\def\thethm{\ref*{cor:fullhlgrecovery}}
\begin{cor}
The filtered $S^1$-equivariant homotopy type of $SH_S(M,\bS)$ determines $H_*(M,\bZ)$.	
\end{cor}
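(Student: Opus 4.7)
The plan is to reduce to \Cref{thm:hlgrecovery} applied one prime at a time and then invoke the classification of finitely generated abelian groups. Since $M$ is a Liouville manifold it has the homotopy type of a finite CW complex, so $H_*(M,\bZ)$ is finitely generated. It therefore suffices, for each prime $p$, to recover both the free rank $r_i$ and the $p$-primary torsion subgroup $T_{i,p}$ of $H_i(M,\bZ)$ from the filtered $S^1$-equivariant homotopy type of $SH_S(M,\bS)$.

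Fix a prime $p$ and choose $m$ large enough to satisfy the dimension hypothesis of \Cref{thm:hlgrecovery}; crucially, $m$ depends only on $p$ and $\dim M$, not on $k$. For every $k \geq 1$, \Cref{thm:hlgrecovery} identifies $\tate{K_{p^k}(m)}(SH_S(M,\bS))$ with $H_{2n-*}(M, \bZ[v_m, v_m^{-1}]/p^k)((u))$ as (graded) groups. Since $\bZ[v_m, v_m^{-1}]/p^k$ is a free graded $\bZ/p^k$-module and $((u))$ contributes further free summands, the graded pieces of the right-hand side determine the abelian groups $H_i(M, \bZ/p^k)$ for all $i$ and all $k$.

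Now apply the universal coefficients theorem:
\[
H_i(M, \bZ/p^k) \;\cong\; (\bZ/p^k)^{r_i} \;\oplus\; (T_{i,p} \otimes \bZ/p^k) \;\oplus\; \mathrm{Tor}(T_{i-1,p}, \bZ/p^k).
\]
For $k$ larger than the exponents of the finite $p$-groups $T_{i,p}$ and $T_{i-1,p}$, the last two summands stabilize to $T_{i,p}$ and $T_{i-1,p}$ respectively, while the first summand is exactly the maximal $\bZ/p^k$-free factor. Hence $r_i$ and the isomorphism class of the finite abelian $p$-group $T_{i,p} \oplus T_{i-1,p}$ can both be read off for $k$ sufficiently large.

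Finally, disentangle each $T_{i,p}$ by induction on $i$, starting from $T_{-1,p} = 0$. By the uniqueness of the invariant factor decomposition of a finite abelian $p$-group, $T_{i,p}$ is determined from the pair $(T_{i,p} \oplus T_{i-1,p}, \, T_{i-1,p})$ by cancelling common invariant factors. Assembling over all primes $p$ reconstructs $H_*(M,\bZ)$. The only substantive input is \Cref{thm:hlgrecovery} itself; the remainder is routine commutative algebra, and the only minor technical point is correctly bookkeeping the grading when extracting $H_i(M, \bZ/p^k)$ from the Laurent expression $H_{2n-*}(M, \bZ[v_m, v_m^{-1}]/p^k)((u))$.
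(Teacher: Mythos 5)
There is a genuine gap at the claim that ``the graded pieces of the right-hand side determine the abelian groups $H_i(M,\bZ/p^k)$ for all $i$.'' In $H_{2n-*}(M,\bZ[v_m,v_m^{-1}]/p^k)((u))$, the element $u$ has degree $-2$ and is invertible in the Laurent-series ring, so multiplication by $u$ identifies the degree-$d$ and degree-$(d-2)$ graded pieces. Consequently the graded module is $2$-periodic and its graded pieces only yield the parity aggregates
\[
\bigoplus_{i\ \mathrm{even}} H_i(M,\bZ/p^k),
\qquad
\bigoplus_{i\ \mathrm{odd}} H_i(M,\bZ/p^k),
\]
never the individual groups $H_i(M,\bZ/p^k)$. (The $v_m$-grading is also even, of degree $2(p^m-1)$, so it does not help.) This is fatal to your induction on $i$: the step ``determine $T_{i,p}$ from $(T_{i,p}\oplus T_{i-1,p},\, T_{i-1,p})$'' requires knowing each $H_i(M,\bZ/p^k)$ separately, which you do not have. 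You flag this as a ``minor technical point'' of bookkeeping, but it is in fact the crux, and as written the method would purport to recover the \emph{graded} integral homology --- strictly more than the corollary (and the abstract) actually claim, which should itself have been a warning sign.

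The paper avoids the issue by working ungraded throughout: it takes the inverse limit over $k$ of the isomorphism in \Cref{thm:hlgrecovery}, obtaining a module over $\bZ_p[v_m,v_m^{-1}]((u))^\wedge$, and then reads off the cyclic-module decomposition. Each $\bZ$ summand of $H_*(M,\bZ)$ contributes one free cyclic factor and each $\bZ/p^l$ summand contributes \emph{two} copies of $\bZ_p[v_m,v_m^{-1}]((u))^\wedge/p^l$ (one from $\otimes$, one from $\mathrm{Tor}$, just as in your universal coefficients computation). Ranging over primes $p$ recovers $H_*(M,\bZ)$ as an ungraded abelian group. Your overall strategy --- Morava $K$-theory for varying $k$, universal coefficients, stabilization in $k$, then assemble over primes --- is the same in spirit, and the cancellation step you invoke (determining $T_p$ from $T_p\oplus T_p$) is exactly what is needed; but it must be run on the ungraded total homology, not on individual $H_i$'s, and the induction on $i$ must be dropped.
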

\addtocounter{thm}{-1}
\endgroup
As noted, the invariant $\tate{R}(SH(M,\bS))$ depends on the filtration in a rather loose sense. Also, we choose to work with $SH(M,\bS)$ for simplicity, but the invariant $\tate{R}(SH(M,\bS))$ can be defined under milder assumptions on $M$, when $R$ is complex oriented. Our results can likely be extended to this generality but we will not pursue this. 

We also show we can recover complex $K$-theory:
\begingroup
\def\thethm{\ref*{thm:kutatesh}}
\begin{thm}
    After simultaneously completing at every prime $p$,  the groups $\tate{KU}(SH_S(M,\bS))$ and $KU_{2n-*} (M)((u))$ become isomorphic. In other words, $\tate{KU}(SH_S(M,\bS))^\wedge\simeq KU_{2n-*} (M)((u))^\wedge$. As a result, the filtered $S^1$-equivariant homotopy type of $SH_S(M,\bS)$ determines $KU_*(M)$ (and hence $KU^*(M)$ by the universal coefficients theorem). 
\end{thm}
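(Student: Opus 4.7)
The plan is to combine \Cref{thm:tatecohforsh} with an explicit computation of the completed Tate cohomology for a trivial $S^1$-spectrum with trivial filtration. By \Cref{thm:tatecohforsh} there is an equivalence
\[
\tate{KU}(SH_S(M,\bS)) \simeq \tate{KU}(\Sigma^\infty(M/\partial_\infty M)),
\]
which reduces the problem to a purely algebraic computation. Since $KU$ is complex oriented and the right-hand side has trivial filtration and trivial $S^1$-action, the complex orientation identifies $KU^*(Y_{hS^1}) = KU^*(Y)[[u]]$; the completed Tate cohomology is then obtained from $KU^*(Y)[[u]]$ by localizing at every $[n]_{KU}(u) = (1+u)^n - 1$ and then $u$-adically completing.

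The main technical obstacle is analyzing this localization-and-completion procedure for the multiplicative formal group law. Unlike in the situations covered by \Cref{cor:eilenbergmaclanetatesh} or \Cref{cor:deformedmoravash}, infinitely many of the $[n](u)$ carry nontrivial information that does not vanish after inverting $u$. Working one prime at a time: after $p$-completion, for $n$ coprime to $p$ the series $[n](u) = nu + O(u^2)$ is a unit multiple of $u$, so localizing at it is subsumed by inverting $[1](u) = u$; whereas for $n = p^k$, the distinguished series $[p^k](u) \equiv u^{p^k}\pmod{p}$ is a Weierstrass-type element, and the combined effect of inverting it and then $u$-adically completing in the appropriate topological sense produces $KU^*((u))^\wedge_p$. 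Extending this from $Y = S^0$ to $Y = \Sigma^\infty(M/\partial_\infty M)$ requires a K\"unneth-style argument, which is clean because $KU^*(M/\partial_\infty M)$ is finitely generated over $KU^*$ (using that $M$ has the homotopy type of a finite CW complex). Assembling the $p$-completions across all primes yields the first assertion of the theorem.

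To put this into the form stated, the stable framing of $M$ combined with Atiyah duality gives $\Sigma^\infty(M/\partial_\infty M) \simeq \Sigma^{2n}\Sigma^\infty M_+$, so that $KU^*(M/\partial_\infty M) \cong KU_{2n-*}(M)$. For the recovery statement, $KU_*(M)$ is a finitely generated graded abelian group, hence is determined by its profinite completion together with its rationalization via the arithmetic fracture square. Extracting the coefficient of $u^0$ from $KU_{2n-*}(M)((u))^\wedge$ recovers $\prod_p KU_{2n-*}(M)^\wedge_p$, while the rational data is already encoded in $SH_S(M,\bS)$ by \Cref{cor:eilenbergmaclanetatesh}. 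Piecing these together determines $KU_*(M)$, and $KU^*(M)$ then follows by the universal coefficients theorem for $K$-theory.
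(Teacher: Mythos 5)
Your argument for the isomorphism $\tate{KU}(SH_S(M,\bS))^\wedge\simeq KU_{2n-*}(M)((u))^\wedge$ is essentially the paper's: reduce via \Cref{thm:tatecohforsh} to a trivial-action spectrum and analyze $\mathfrak{F}_{KU}$ for the multiplicative formal group law prime-by-prime. The only cosmetic difference is that you work with the $p$-adic completion directly while \Cref{exmp:ku} in the paper reduces mod $p^k$ and exploits nilpotence of $p$ in $KU/p^k$; both are in the same spirit and both gloss over the same commutation-of-limits subtleties.

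Two points where your proposal diverges from, or is less careful than, the paper:

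First, the duality claim is wrong as stated. You write $\Sigma^\infty(M/\partial_\infty M)\simeq \Sigma^{2n}\Sigma^\infty M_+$, but Atiyah duality for a stably framed compact manifold-with-boundary gives $\Sigma^\infty(M/\partial_\infty M)\simeq \Sigma^{2n}D(\Sigma^\infty M_+)$, the Spanier--Whitehead \emph{dual} of $M_+$, not $M_+$ itself. The downstream isomorphism $KU^*(M/\partial_\infty M)\cong KU_{2n-*}(M)$ is of course still correct (this is exactly Poincar\'e duality, which is what \Cref{thm:tatecohforsh} invokes), but your stated spectrum-level equivalence does not hold for general $M$.

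Second, the recovery step is where you take a genuinely different route, and it has a gap. You propose ``extracting the coefficient of $u^0$'' from $KU_{2n-*}(M)((u))^\wedge$ to obtain $\prod_p KU_{2n-*}(M)^\wedge_p$, and then invoking the arithmetic fracture square together with the rational information from \Cref{cor:eilenbergmaclanetatesh}. The problem is that ``coefficient of $u^0$'' is not a well-defined operation on an abstract graded $\tate{KU}(\bS)^\wedge$-module: after identifying degrees via $\beta$ and $u$, the degree-zero part is an entire $\bZ((\beta^{-1}u))^\wedge$-module, not a single group, and there is no canonical way to single out a ``$u^0$ slice.'' The paper avoids this entirely by a direct module-theoretic decomposition: writing $KU_0(M)=T\oplus F$ with $T$ torsion and $F$ free, one has $KU_0(M)((\beta^{-1}u))^\wedge\cong T((\beta^{-1}u))\oplus (F\otimes_\bZ\bZ((\beta^{-1}u))^\wedge)$, where the torsion submodule recovers $T$ (by the structure theorem over $\bZ((\beta^{-1}u))$) and the free quotient has rank $\operatorname{rk}(F)$ over $\bZ((\beta^{-1}u))^\wedge$. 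This needs neither the fracture square nor the rational Tate computation. Your argument could be repaired by replacing the coefficient-extraction step with the same torsion/rank analysis, at which point the fracture square and the rational data become superfluous.
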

\addtocounter{thm}{-1}
\endgroup

In other words \Cref{thm:kutatesh} says that the (co)filtered equivariant homotopy type of ``symplectic $K$-homology'' determines the $K$-theory of the underlying manifold. (Recall that for an abelian group $A$, $A^\wedge:=\lim_{n\in\bN}A/nA$.) This result is consistent with Treumann's proposal mentioned earlier; see \Cref{rk:treumann} for further discussions.

It is natural to ask whether it is possible to recover more information about $M$ from symplectic cohomology with coefficients in various ring spectra. 
\begin{question}\label{question:intro}
  Given a Liouville manifold $M$, does $SH_S(M; \mathbb{S})$ with its action filtration remember the stable homotopy type of $M$? 
\end{question}
As we mentioned in the beginning, the Morava $K$-theories are the building blocks of spectra. As a result, it may already be possible to obtain an affirmative answer to \Cref{question:intro} by combining the Floer theoretic input of this paper with other 
tools of chromatic homotopy theory, such as the chromatic tower and the chromatic fracture square.

\begin{rk}[Potential dynamical applications]
The constructions in this paper generalize to any autonomous Hamiltonian $H$ that is linear at infinity (with respect to any choice of Liouville form), and associate to it an $S^1$-equivariant spectrum. Moreover, the local Floer homology calculation \Cref{prop:localequiv} states that the $S^1$-equivariant homotopy type in a small action window near a non-constant orbit $X\cong S^1/C_k$ of $H$ is a Thom spectrum $X^\nu$, where $\nu$ is an $S^1$-equivariant virtual bundle over $X$. While the non-equivariant versions of this statement are straightforward (c.f. \cite{cieliebakhoferwysockiapplsymplchlg}), the $S^1$-equivariant version requires the use of new machinery.

It seems reasonable to expect that the local equivariant Floer cohomology calculation could be useful in dynamical applications. Namely, an easy calculation shows that the $S^1$-equivariant $R$-cohomology $R_{S^1}^*(X^\nu):=R_{S^1}^*((X^\nu)_{hS^1})$ is equal (up to a shift) to (i) $R^*[[u]]/([k]_R(u))$ if $\nu$ is trivial (i.e. $X$ is a \emph{good orbit}), (ii) $R^*[[u]]/([k]_R(u)/[k/2]_R(u))$ is $\nu$ is non-trivial (i.e. $X$ is a \emph{bad orbit}, note that $k$ is even in this case). Therefore, the local equivariant Floer cohomology retains information about the multiplicity $k$ of the orbits, as well as the bad orbits. For instance, if one ignores the bad orbits, and considers the kernel of the localization at $u$, one discards all multiple orbits, and is left with the simple periodic orbits. One expects this to be helpful in answering questions related to simple periodic orbits (a major example of such a question is Hofer-Wysocki-Zehnder's two or infinity conjecture, which is recently proven in \cite{dancgtwoinfty}). In a different direction, the same calculation shows that bad orbits do not contribute to the equivariant Floer cohomology when $2\in R^*$ is invertible, and they behave differently than the good orbits when $R$ is $2$-local (e.g. $R=H\bF_2$ or $K_2(n)$). Therefore, it is reasonable to anticipate that this calculation will be useful in addressing questions related to the existence of bad orbits.
\end{rk}

\subsection*{Acknowledgements} 
The authors would like to thank Irakli Patchkoria, Tobias Barthel, Andrew Blumberg, Sanath Devalapurkar, Yash Deshmukh, Kaif Hilman, Jonas McCandless, Ivan Smith, Marco Volpe and Hiro Tanaka for useful discussions and comments. The authors would also like to thank Tim Large for a particularly inspiring seminar talk, from which they learned that equivariant localization works for Morava K-theory (see \Cref{exmp:ordinarymorava}). Finally, the authors thank the anonymous referee for many useful comments.

This work was completed while the first author was visiting the Max Planck Institute for Mathematics, which he thanks for its hospitality. The first author was also partially supported by the National Science Foundation under Grant No. DMS-2305257 and by the Simons Collaboration on Homological Mirror symmetry; and the second author was supported by ERC Starting Grant 850713 (Homological mirror symmetry, Hodge theory, and symplectic topology), and the Simons Collaboration on Homological Mirror Symmetry.

\section{Filtered equivariant spectra and completed Tate cohomology}\label{sec:filteredeqspec}
\subsection{Reminders in homotopy theory}

This paper considers Floer theoretic invariants defined over ring spectra. Since the modern foundations for ring spectra are written using infinity categorical language, some amount of this language is unavoidable in our paper. However, we have included some explanatory remarks (e.g. \Cref{remark:symplectic-context}) for geometrically minded readers. 

\subsubsection{Conventions} 
Following the standard convention, we will often write category when we mean infinity category, colimit when we mean homotopy colimit, etc. 

Let $\Sp$ be the infinity category of spectra. This is a symmetric monoidal category; we typically denote the monoidal product (i.e.\ the smash product) by $-\otimes -$ and denote the internal hom (i.e.\ the function spectrum) by $F(-, -)$.  A \emph{ring spectrum} (or \emph{multiplicative cohomology theory}) is a spectrum $R \in \Sp$ equipped with a multiplication $R \otimes R \to R$ which is homotopically associative and unital. Following the usual convention, we will not distinguish between generalized cohomology theories and their representing spectra. 

Let $\Top$ be the (infinity) category of spaces and let $\Top_*$ be the (infinity) category of pointed spaces. There are natural functors $(-)_+: \Top \to \Top_*$ and $\Sigma^\infty(-): \Top_* \to \Sp$ (left adjoint to the forgetful functor, resp.\ to $\Omega^\infty(-)$). We typically denote their composition by $\Sigma^\infty (-)_+$.

Given a spectrum $X$, its $R$-cohomology is the graded abelian group $R^*(X):= \pi_{-*} F(X, R)$. If $Y \in \Top_*$, we let $R^*(Y):= R^*(\Sigma^\infty Y)$. If $Z \in \Top$, then $R^*(Z):= R^*(\Sigma^\infty Z_+)$. We define the reduced cohomology $\tilde{R}^*(Z):= \ker(R^*(Z_+) \to R^*(*_+))$, where $*_+ \to Z_+$ collapses $*_+$ to the basepoint $Z_+$.  We let $R^*:= R^*(\mathbb{S})= \pi_{-*}R$ be the \emph{coefficients} of the spectrum $R$.  We adopt analogous conventions for $R$-homology.




\subsubsection{Borel $G$-spectra} Let $G$ be a compact Lie group. The functor category $\Fun(BG, \Sp):= \Sp^{BG}$ shall be called the category of \emph{Borel $G$-spectra}. It inherits a symmetric monoidal structure from $\Sp$, where the monoidal unit is the sphere with the trivial $G$-action. We will often refer to the objects of $\Sp^{BG}$ as \emph{$G$-equivariant spectra}.\footnote{There are other non-equivalent objects in homotopy theory which also deserve this name, but they will never be considered in this paper.} 

Given a Borel $G$-spectrum $X: BG \to \Sp$, we let 
\begin{equation}\label{equation:fixed-orbits}
    X_{hG}:= \colim_{BG} X \in \Sp \hspace{1cm} X^{hG}:= \lim_{BG} X \in \Sp
\end{equation} 
be respectively the \emph{homotopy quotient} (or \emph{homotopy orbits}) and the \emph{homotopy fixed points}. More concretely, $X_{hG}$ can be defined as $X\otimes_G EG_+$, and $X^{hG}$ can be defined as $Map^G(EG_+,X)$. It is often also useful to note that $(-)_{hG}$ and $(-)^{hG}$ are the left (resp.\ right) adjoints of the pullback $\pi^*: \Sp \to \Sp^{BG}$ of the map $\pi: BG \to *$ giving a spectrum the trivial $G$ action. Note finally the identity $F(X_{hG}, R)\simeq F(X, R)^{hG}$, which follows from \eqref{equation:fixed-orbits} by abstract nonsense. 

\begin{lem}\label{lem:module1}
If $R$ is an $E_k$ ring spectrum, $k \geq 1$, then $F(\bS_{hG}, R)\simeq F(\Sigma^\infty  BG_+, R)$ is also an $E_k$ ring spectrum. For any $X \in \Sp^{BG}$, $F(X_{hG}, R) $ is a module over $F(\bS_{hG}, R)$.
\end{lem}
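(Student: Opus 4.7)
The plan is to reduce the statement to a formal fact about lax symmetric monoidal functors. First, unwind the definition to identify $\bS_{hG} \simeq \Sigma^\infty BG_+$: indeed, $EG_+ \wedge_G \bS \simeq (EG/G)_+ \simeq BG_+$, and after taking $\pi_{-*}F(-, R)$ this recovers $R^*(BG)$ by definition. Combining this with the identity $F(X_{hG}, R) \simeq F(X, R)^{hG}$ already noted in the excerpt, the claim reduces to showing that $R^{hG}$ is an $E_k$-ring spectrum and that $F(X, R)^{hG}$ is a module over it.

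The main step is to invoke the standard fact that the projection $\pi: BG \to *$ induces a strong symmetric monoidal pullback $\pi^{*}: \Sp \to \Sp^{BG}$ whose right adjoint is $(-)^{hG}$. Right adjoints of strong symmetric monoidal functors are automatically lax symmetric monoidal, and lax symmetric monoidal functors preserve $E_k$-algebra objects and modules over them. The spectrum $R$ regarded as $\pi^{*}R \in \Sp^{BG}$ is an $E_k$-algebra in $\Sp^{BG}$ by functoriality, and for any $X \in \Sp^{BG}$ the internal hom $F(X, R)$ carries a natural $R$-module structure in $\Sp^{BG}$ coming from post-composition with the multiplication $R \otimes R \to R$. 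Applying $(-)^{hG}$ then yields the desired $E_k$-algebra structure on $R^{hG} \simeq F(\bS_{hG}, R)$ and the compatible module structure on $F(X, R)^{hG} \simeq F(X_{hG}, R)$.

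Since this argument is entirely formal, I do not expect any serious obstacle. The only point requiring care is the invocation of the symmetric monoidal structures and the lax symmetric monoidality of the adjunction at the $\infty$-categorical level, which is standard. A more concrete alternative, should one wish to avoid this formalism, is to build the multiplication by hand from the cocommutative coalgebra structure on $\Sigma^\infty BG_+$ (induced by the diagonal $BG \to BG \times BG$) together with the Kunneth pairing $F(Y, R) \otimes F(Z, R) \to F(Y \otimes Z, R)$ coming from the $E_k$-structure on $R$, and to construct the module action using the comodule structure of $X_{hG}$ over $\Sigma^\infty BG_+$ arising from the Borel construction.
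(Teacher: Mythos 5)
Your proof is correct. Your main argument takes the ``dual'' route to the paper's: the paper uses the \emph{left} adjoint $(-)_{hG}$ of the strong monoidal $\pi^*$, notes that a left adjoint of a (strong) monoidal functor is oplax (co)monoidal, concludes that $\bS_{hG}$ is a cocommutative coalgebra (and $X_{hG}$ a comodule over it), and then dualizes via $F(-,R)$ to get the ring and module structures. You instead use the \emph{right} adjoint $(-)^{hG}$, note that it is lax symmetric monoidal, and observe directly that lax symmetric monoidal functors preserve $E_k$-algebras and modules, so $R^{hG}\simeq F(\bS_{hG},R)$ inherits its $E_k$-structure without passing through coalgebras; the identification $F(X_{hG},R)\simeq F(X,R)^{hG}$ then delivers the module structure. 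The two arguments are formally equivalent, but yours avoids the intermediate dualization and is, if anything, slightly more direct. Your closing ``more concrete alternative'' is essentially word-for-word the paper's actual argument.

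One minor point you could flag if you want to be careful: lax \emph{symmetric} monoidal functors preserve $E_k$-algebras for all $k$ because the $E_k$-operads map to $E_\infty$; this is standard but worth a sentence if the target audience is not assumed to know it. Otherwise the reasoning is complete.
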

\begin{proof}
By virtue of being the monoidal unit, the sphere $\bS$ with trivial $G$-action forms a co-commutative co-algebra. By virtue of being a left-adjoint of a lax monoidal functor, $(-)_{hG}$ is oplax monoidal, i.e. there are natural maps $(X\otimes Y)_{hG}\to X_{hG}\otimes Y_{hG}$ (satisfying some higher coherence conditions \cite{haugseng2023lax, torii2023perfect}). Hence $\bS_{hG}$ is a co-commutative co-algebra, and we have canonical maps $F(\bS_{hG}, R) \otimes F(\bS_{hG}, R) \to F(\bS_{hG} \otimes \bS_{hG}, R \otimes R) \to F(\bS_{hG},R \otimes R) \to F(\bS_{hG}, R)$. Similarly, $X_{hG}$ is a co-module over $\bS_{hG}$, leading to a $F(\bS_{hG}, R)$-module structure on $F(X_{hG}, R)$.
\end{proof}

By similar considerations, one can argue that if $Y$ is a $G$-space, then the diagonal makes $\Sigma^\infty Y_+$ into a co-algebra, and hence $F(Y_{hG}, R)$ is an algebra.

The cohomology $R^*(X_{hG})= \pi_*F(X_{hG}, R)$ is the \emph{$R$-valued equivariant cohomology}. Note that if $X$ is a \emph{space}, then $\Sigma^\infty (X_{hG})_+= (\Sigma^\infty X_+)_{hG}$ and we have $R^*((\Sigma^\infty X_+)_{hS^1})= R^*(X_{hG})= R^*(EG \times_G X)$.  

\subsubsection{Complex orientations}

\begin{defn}
    A ring spectrum is said to be \emph{complex orientable} if the pull-back $R^*(BS^1)=R^*(\bC\bP^\infty)\to R^*(\bC\bP^1)$ is surjective. A complex orientation is a lift of $1\in \pi_0(\bC\bP^1)\cong \tilde R^2(\bC\bP^1)$ to $R^2(BS^1)$. A choice of such a lift specifies an isomorphism $R^*(BS^1)\cong R^*[[u]]$, where $u$ is a variable of degree $-2$.
\end{defn}

A complex orientation defines a formal group law $F_R\in R^*[[u_1,u_2]]$. Concretely, $F_R$ is the image of $u$ under the pull-back $R^*[[u]]=R^*(BS^1)\to R^*[[u_1,u_2]]=R^*(BS^1\times BS^1)$ of the classifying map $BS^1\times BS^1\to BS^1$ corresponding to the tensor product of universal line bundles. One can think of $u\in R^*[[u]] \cong R^*(BS^1)$ as the analogue of the first Chern class in the generalized cohomology theory $R$, and $F_R$ is the formula for the Chern class of tensor products of line bundles. 

Let $[n]_R(u)\in R^*[[u]]$ denote the $n^{th}$ iterate of $u$ under the formal group law $F_R$ (i.e. $[2]_R(u)=F_R(u,u)$, $[3]_R(u)=F_R([2]_R(u),u)$, $\dots$). 

\begin{lem}\label{lemma:0-action}
Let $Y:=S^1/C_k$ denote the homogeneous $S^1$-space with stabilizer given by the cyclic group $C_k, k> 0$.  Then $[k]_R(u) \in R^*(\mathbb{S}_{hS^1})= R^*[[u]]$ acts by zero on $R^*(Y_{hS^1})$, under the $R^*(\mathbb{S}_{hS^1})$-algebra structure induced by the map of $G$-spaces $Y \to *$.

\end{lem}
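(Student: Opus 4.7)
The plan is to show that the structure map $R^*[[u]] \to R^*(Y_{hC_k})$ factors through a ring in which $[k]_R(u)$ already vanishes.

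First, I will identify the relevant spaces. Since $C_k \hookrightarrow S^1$ acts freely on $ES^1$, one may take $EC_k = ES^1$, whence $BC_k = ES^1/C_k$ and the natural further quotient $BC_k \to BS^1$ is the standard map. Naturality of the Borel construction along the restriction from $S^1$ to $C_k$, applied to the equivariant map $Y \to *$, produces a commutative square
\begin{equation}
\begin{tikzcd}
Y_{hC_k} \arrow[r] \arrow[d] & BC_k \arrow[d] \\
Y_{hS^1} \arrow[r] & BS^1,
\end{tikzcd}
\end{equation}
so that the $R^*(\bS_{hS^1}) = R^*[[u]]$-algebra structure on $R^*(Y_{hC_k})$ factors through $R^*(BC_k)$.

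It therefore suffices to show that $[k]_R(u)$ maps to zero under the pullback $R^*(BS^1) \to R^*(BC_k)$. Let $\gamma$ be the universal complex line bundle on $BS^1$, so that $c_1^R(\gamma) = u$ by the definition of the complex orientation, and let $\lambda$ be its pullback to $BC_k$; equivalently, $\lambda$ is the line bundle associated to the defining character $C_k \hookrightarrow S^1 \subset \bC^\times$. The key input is that Chern classes of tensor products are computed by the formal group law, which by induction on the number of factors gives $c_1^R(\lambda^{\otimes k}) = [k]_R(c_1^R(\lambda))$. Since the $k$-th power of the defining character of $C_k$ is trivial, $\lambda^{\otimes k}$ is a trivial line bundle and hence $c_1^R(\lambda^{\otimes k}) = 0$. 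Consequently $[k]_R(u)$ vanishes in $R^*(BC_k)$ and acts by zero on any $R^*(BC_k)$-module, in particular on $R^*(Y_{hC_k})$.

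Everything beyond the formal group law identity is routine unraveling of definitions, so there is no serious obstacle; care is only needed to confirm that the identification of $u$ with $c_1^R(\gamma)$ is consistent with both the complex orientation convention and the convention defining $[k]_R$ as the iterated formal sum.
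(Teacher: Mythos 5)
Your proof is correct, but it takes a different route from the paper. The paper invokes the Gysin sequence for the circle bundle $S^1 \hookrightarrow BC_k \to BS^1$: the Euler class of this bundle is $[k]_R(u)$, and Euler classes pull back to zero on the total space of the associated sphere bundle. You instead argue directly: identifying the pullback of the universal line bundle to $BC_k$ with the line bundle $\lambda$ associated to the defining character of $C_k$, you use the formal group law identity $c_1^R(\lambda^{\otimes k}) = [k]_R(c_1^R(\lambda))$ together with the representation-theoretic observation that $\lambda^{\otimes k}$ is trivial. Both approaches ultimately rest on the same formal group law computation — the paper's Euler class identification is exactly your $c_1^R$ calculation — but yours avoids the Thom isomorphism and the Gysin sequence machinery, making it more elementary. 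The Gysin argument does yield a slightly stronger conclusion (the full exact sequence computing $R^*(BC_k)$), but that extra information is not used in the paper. One minor point of care that you handle correctly and the paper glosses over: as stated, the lemma speaks of $Y_{hC_k}$, which is not literally $BC_k$ (since $C_k$ acts trivially on $Y = S^1/C_k$ for abelian $S^1$, $Y_{hC_k} \simeq S^1 \times BC_k$, whereas it is $Y_{hS^1}$ that equals $BC_k$). Your commutative square factoring the module structure through $R^*(BC_k)$ makes the argument work for either reading, and this is consistent with how the lemma is applied in the proof of Lemma \ref{lem:nontrivaction}, which uses $Y_{hS^1}$.
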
 
\begin{proof}
    Note that $Y_{hS^1}\simeq BC_k$. It follows from the Gysin sequence 
    \begin{equation}
        R^*[[u]] \xrightarrow{ \cdot [k]_R(u)} R^*[[u]] \to R^*(BC_k) \to
    \end{equation} associated to the fibration $S^1 \hookrightarrow BC_k \to BS^1$ that $[k]_R(u)$ pulls back to zero in $R^*(Y_{hS^1})$. 
\end{proof}

\subsection{Filtered objects of stable $\infty$-categories}

In this section, we discuss filtrations on objects of stable $\infty$-categories. For motivation, geometrically minded readers may want to start with \Cref{remark:symplectic-context}.

Let $\mathcal{C}$ be a stable $\infty$ category. Then one can form the (infinity) category $\operatorname{Fun}(\mathbb{N}, \mathcal{C})$ of functors from (the nerve of) $\mathbb{N}$ to $\mathcal{C}$. An object is just a sequence $X_1 \to X_2 \to \dots$, and a $(1-)$morphism is a diagram of such sequences, etc. We write $\widetilde{\mathcal{C}}_{fil}:= \operatorname{Fun}(\mathbb{N}, \mathcal{C})$.

Given a filtration $X_1 \to X_2 \to \dots$ and a sequence $1\leq i_1< i_2< \dots $, there is a corresponding  \emph{subfiltration} $X_{i_1}\to X_{i_2}\to \dots$. There is a ``tautological'' morphism $(X_1 \to X_2 \to \dots ) \to (X_{i_1}\to X_{i_2}\to \dots)$ in $\widetilde{\mathcal{C}}_{fil}$. 

\begin{defn}\label{definition:equivalence}
Let $\mathcal{C}_{fil}$ be obtained from $\widetilde{\mathcal{C}}_{fil}$ by localizing at the tautological morphisms corresponding to every subfiltration. 
\end{defn}

The (homotopy) colimit defines a functor $\widetilde{\mathcal{C}}_{fil} \to \mathcal{C}$. By elementary properties of the colimit, it passes to a functor $\mathcal{C}_{fil} \to \mathcal{C}$.

A \emph{filtration} on an object of $X \in \mathcal{C}$ is simply a lift to $\mathcal{C}_{fil}$. We also call such a lift a \emph{filtered object}.
Two filtered objects are said to be \emph{equivalent} if they are equivalent in $\mathcal{C}_{fil}$. We will occasionally denote a filtered object by $(X_1 \to X_2 \to \dots \simeq X)$. 

Objects in $\mathcal{C}$ may have multiple non-equivalent filtrations: 
\begin{exmp}
Consider the sequence of $\bQ[u]$-modules given by $\dots\to \bQ[u]/(u^3)\to\bQ[u]/(u^2)\to \bQ[u]/(u)$. This is a filtration on $\bQ[[u]]$ in the category $\cC=Mod(\bQ[u])^{op}$. If this was equivalent to the trivial filtration, we would have
\begin{equation}
0=	\lim\limits_{k} \big(\bQ[u]/(u^k)\otimes_{\bQ[u]}^L\bQ[u,u^{-1}]\big)=\lim\limits_{k} \big(\bQ[[u]]\otimes_{\bQ[u]}^L\bQ[u,u^{-1}]\big)\neq 0 
\end{equation}
\end{exmp}
A concrete way to show that two filtrations $X_1\to X_2\to\dots $ and $X_1'\to X_2'\to\dots $ are equivalent is to find sequences $1\leq i_1\leq i_2 \dots $, $1\leq j_1\leq j_2 \dots $, and maps $X_k'\to X_{i_k}$, $X_k\to X_{j_k}'$ of filtrations such that the two sided compositions are equivalent to the natural subfiltration maps $X_k\to X_{i_{j_k}}$ and $X'_k\to X'_{j_{i_k}}$. One can also show that, for any sequence $1\leq i_1\leq i_2\leq \dots $ that is not strictly monotone, but $i_k\to\infty$, $X_{i_1}\to X_{i_2}\to \dots$ is still equivalent to $X_1\to X_2\to\dots $ (by a zigzag through a common subfiltration). Throughout the paper, unless stated otherwise, we only consider the filtrations $X_1\to X_2\to \dots$ such that all of $X_i$ are of finite type. 

\begin{rk}[Symplectic context]\label{remark:symplectic-context}
We briefly sketch how this abstract formalism is related to a very classical construction in symplectic geometry. 

Let $\mathcal{C}$ (temporarily) be the category of chain complexes. To define symplectic cohomology, one typically proceeds in two steps. First: consider a cofinal sequence of Hamiltonians $H_1, H_2, \dots$ giving rise to a sequence $CF(M; H_1) \to CF(M; H_2) \to \dots$, where $CF(M; H_i) \in \mathcal{C}$.\footnote{ Here we omit the choice of $J$ and ignore questions of genericity/transversality/etc} Second: take the colimit.  The \emph{proof} that the colimit does not depend on the choice of cofinal sequence of Hamiltonians (see e.g.\ \cite[(4a)]{seidel2008biased}) actually shows more, namely that $SH(M)$ is well-defined in $\mathcal{C}_{fil}$. 

Using the Borel construction and the foundations from \cite{largethesis, cotekartalequivariantfloerhomotopy}, one can define (under appropriate assumptions; see \Cref{subsubsection:data-borel}) $CF(M; H_i)$ as a filtered spectrum with an $S^1$-action; that is, one can take $\mathcal{C}= \operatorname{Sp}^{BS^1}$. Hence $SH(M)$ can be defined as an object in $\Sp^{BS^1}_{fil}$. 
\end{rk}
\begin{rk}
Commutative diagrams in any infinity categorical context are secretly understood to be homotopy coherent, meaning that arrows commute up to a coherent system of homotopies. Such subtleties are irrelevant for the purpose of defining symplectic cohomology as a filtered spectrum, because any functor $\mathbb{N} \to \operatorname{Ho}(\mathcal{C})$, for $\mathcal{C}$ an infinity category, lifts to a homotopy coherent diagram in an essentially unique way. 
\end{rk}

\subsection{Completed Tate cohomology}\label{subsection:completed-tate}
\subsubsection{The definition}

Given a complex-oriented cohomology theory $R$, we define
\begin{equation}
    \mathfrak{F}_R:= \{ [1]_R(u),[2]_R(u),[3]_R(u),\dots \} \subset R^*[[u]].
\end{equation}
By definition, $[k]_R(u)$ is homogeneous of degree $-2$. 

\begin{defn}\label{defn:completedtate}
Let $X \in \Sp^{BS^1}_{fil}$ be a filtered $S^1$-equivariant spectrum. Given a complex-oriented cohomology theory $R$, define the \emph{completed Tate cohomology} of $X$ to be the limit
\begin{equation}\label{eq:completedtate}
\tate{R}(X):=\lim_k	\mathfrak{F}_R^{-1}R^*((X_k)_{hS^1})=\lim_k	R^*((X_k)_{hS^1})[[1]_R(u)^{-1},[2]_R(u)^{-1},[3]_R(u)^{-1}\dots ]
\end{equation}
where the localization is completed with respect to the topology induced by the $u$-adic topology on $R^*((X_k)_{hS^1})$. This is different from the $u$-adic topology on the localization, which is coarse. See \Cref{note:topomod}.
\end{defn}
\begin{note}\label{note:topomod}
Recall that the completion of a filtered commutative group $M$ is defined to be $\lim_p M/F^pM$. This is equivalent to the topological completion for the topology defined by the neighborhood basis $\{F^pM\}$. If $M$ is graded, one takes the limit in the graded category too; thus, the completion is still graded. If $A$ is a commutative topological algebra, $S\subset A$ and $M$ is a topological module over $A$, then the localization $S^{-1}M$ inherits a topology. More precisely, $S^{-1}M$ is a colimit over a diagram with terms $M$, and the arrows given by multiplication with the elements of $S$. This induces a topology on $S^{-1}M$: explicitly a subset $U\subset S^{-1}M$ is open if and only if the set $\{m\in M:s^{-1}m\in  U\}$ is open for every $s\in S$. If $u\in A$, and $A$ is endowed with the topology generated by $u^kA$ (i.e. the $u$-adic topology), and $u\in S$, then the topology induced on $S^{-1}M$ is not the $u$-adic topology of the quotient ($u^kS^{-1}M=S^{-1}M$, so the $u$-adic topology is coarse).

For example, $\bQ[q]$ is filtered by $q^p\bQ[q], p\geq 0$, and this topology descends to $\Q[q,q^{-1}]$. Notice however, it is not generated by $q^p\bQ[q,q^{-1}], p\geq 0$, but rather by $q^p\bQ[q]\subset \bQ[q,q^{-1}]$, which also is the topology corresponding to the $u$-adic norm. The completion with respect to this topology is given by $\bQ((u))$. 
\end{note}

Observe
\begin{itemize}
	\item $\tate{R}(X)$ carries the structure of a complete $\tate{R}(\mathbb{S})$-module.
	\item $\tate{R}(X)$ is functorial in $X$ and sends finite colimits to limits. 
	\item $\tate{R}(X)$ is functorial in $R$. In other words, if $R\to R'$ is an homomorphism of complex oriented ring spectra (respecting complex orientations), then there is a natural map $\tate{R}(X)\to \tate{R'}(X)$. 
\end{itemize}
Be warned that localization does not commute with infinite limits, so one cannot in general pass the limit inside the localization in \eqref{eq:completedtate} (this is ultimately what makes the invariant useful). On the other hand, if the filtration is trivial/finite, then there is not need to pass to the limit. Hence we will also use the notation $\tate{R}(X_k)$ to refer to the terms in the limit in \eqref{eq:completedtate}.

\subsubsection{Completed Tate cohomology of the building blocks}

In this section, we compute the completed Tate cohomology of Thom spectra of bundles over homogeneous spaces over $S^1$, as well as spaces with trivial $S^1$-action. These are the basic building blocks of the spectra we are interested in. First
\begin{lem}\label{lem:nontrivaction}
Consider $S^1/C_k$, the homogeneous $S^1$-space with stabilizer given by the cyclic group $C_k, k> 0$. Consider a virtual equivariant vector bundle over $S^1/C_k$ and let $X$ denote the corresponding Thom spectrum. Then $\tate{R}(X)=0$. 
\end{lem}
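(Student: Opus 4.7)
The plan is to show that a single element of $\mathfrak{F}_R$, namely $[k]_R(u)$, already acts as zero on $R^*(X_{hS^1})$; inverting it then kills everything, and the (trivial) filtration plays no role.

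First, I would identify $X_{hS^1}$ as a Thom spectrum over the Borel construction $(S^1/C_k)_{hS^1} \simeq BC_k$. An $S^1$-equivariant (virtual) vector bundle over the homogeneous space $S^1/C_k$ is the same data as a (virtual) $C_k$-representation $V$, with the $S^1$-equivariant bundle being $S^1 \times_{C_k} V \to S^1/C_k$. Taking homotopy orbits of its Thom construction yields $X_{hS^1} \simeq \mathrm{Th}(ES^1 \times_{C_k} V)$, a Thom spectrum of a virtual bundle over $BC_k$.

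Second, a Thom spectrum $\mathrm{Th}(W)$ over a space $B$ is naturally a module spectrum over $\Sigma^\infty B_+$ (via the diagonal of $B$). Applying $R$-cohomology, $R^*(X_{hS^1})$ becomes a module over $R^*(BC_k)$. By complex orientation and the Gysin sequence exactly as used in \Cref{lemma:0-action}, we have $R^*(BC_k) \simeq R^*[[u]]/([k]_R(u))$, so $[k]_R(u)$ maps to zero in $R^*(BC_k)$. Consequently $[k]_R(u) \in R^*[[u]] = R^*(\bS_{hS^1})$ acts by zero on $R^*(X_{hS^1})$ via the $R^*(\bS_{hS^1})$-module structure coming from $X \to *$ (cf.\ \Cref{lem:module1}).

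Third, inverting $[k]_R(u)$ on a module on which it acts as zero gives the zero module, so $\mathfrak{F}_R^{-1} R^*(X_{hS^1}) = 0$. Completing zero yields zero, and since the filtration on $X$ is trivial there is no limit to compute. Hence $\tate{R}(X) = 0$. The only step I expect to require care is making precise the $\Sigma^\infty BC_{k+}$-module structure on $X_{hS^1}$ in the virtual case, but this is routine from the general theory of parametrized spectra (and can, if preferred, be reduced to the honest bundle case by stably adding a trivial complement).
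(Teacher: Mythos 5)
Your argument is correct and is essentially the paper's proof: both exploit the $R^*(BC_k)$-module structure on $R^*(X_{hS^1})$ coming from the Thom diagonal, together with the Gysin-sequence observation (\Cref{lemma:0-action}) that $[k]_R(u)$ maps to zero in $R^*(BC_k)$, so that inverting it kills the module. The only differences are cosmetic — the paper packages the Thom diagonal as an equivariant $\Sigma^\infty (S^1/C_k)_+$-comodule structure on $X$ and then passes to homotopy orbits via \Cref{lem:module1}, while you identify $X_{hS^1}$ directly as a Thom spectrum over $BC_k$ (and note that the Thom diagonal makes it a \emph{comodule}, not a module, over $\Sigma^\infty BC_{k+}$, though this does not affect the resulting $R^*(BC_k)$-module structure on cohomology) — plus one small imprecision: the claimed isomorphism $R^*(BC_k)\simeq R^*[[u]]/([k]_R(u))$ would require the Gysin connecting map to vanish, but the fact you actually use, that $[k]_R(u)$ pulls back to zero in $R^*(BC_k)$, follows directly from exactness of the Gysin sequence at the middle term.
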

\begin{proof}
Write $Y$ for the space $S^1/C_k$. A virtual equivariant vector bundle can be written as $E-\underline{V}$, where $E$ is an equivariant vector bundle and $\underline{V}$ is a topologically trivial equivariant bundle corresponding to a representation $V$ of $S^1$, see \cite{segalequivariantktheory}. The Thom spectrum is given by 
\begin{equation}
X=\Sigma_V^{-1}	Y^E=\bS^{-V}\otimes Y^E
\end{equation}
Similarly to \Cref{lem:module1}, $X$ is a comodule over $\Sigma^\infty Y_+$. More precisely, the diagonal $Y\to Y\times Y$ induces a map $Y^E\to Y^E\otimes \Sigma^\infty Y_+$ (to see this note that $E \boxplus 0$ pulls back to $E$ under the diagonal and use the functoriality of Thom spectra). This immediately extends to a coassociative coproduct 
\begin{equation}
X=	\bS^{-V}\otimes Y^E\to \bS^{-V}\otimes Y^E\otimes \Sigma^\infty Y_+=X\otimes \Sigma^\infty Y_+
\end{equation}
and as in the proof of \Cref{lem:module1}, this induces an $R^*(Y_{hS^1})$-module structure on $R^*(X_{hS^1})$. On the other hand, $Y_{hS^1}=BC_k$ and \Cref{lemma:0-action} ensures that $[k]_R(u)\in R^*[[u]]$ acts on $R^*(Y_{hS^1})$ by $0$. 
%
As a result, the $R^*[[u]]$-module structure on $R^*(X_{hS^1})$ refines to a $R^*[[u]]/([k]_R(u))$-module structure, and inverting $[k]_R(u)$ kills the group, and $\mathfrak{F}_R^{-1}R^*(X_{hS^1})=0$. 
%
\end{proof}

We also have
\begin{lem}\label{lem:trivaction}
If $X$ is a finite spectrum with trivial $S^1$-action, then 
\begin{equation}
\tate{R}(X)\simeq \mathfrak{F}_R^{-1}R^*(X)[[u]]=R^*(X)[[u]][[1]_R(u)^{-1},[2]_R(u)^{-1},[3]_R(u)^{-1}\dots ]
\end{equation}
where the localization is in the $u$-completed as before. 
\end{lem}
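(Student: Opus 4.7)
The plan is to reduce to a Künneth identity, exploiting both the trivial filtration and the trivial $S^1$-action. Since the filtration is trivial, the outer limit in \eqref{eq:completedtate} is vacuous, so it suffices to identify $R^*(X_{hS^1})$ with $R^*(X)[[u]]$ as an $R^*[[u]]$-module. Because the $S^1$-action on $X$ is trivial, $X$ is pulled back along $\pi\colon BS^1\to \ast$, and a standard computation of the left adjoint $\pi_!$ gives
\begin{equation}
    X_{hS^1}\simeq X\otimes \Sigma^\infty BS^1_+.
\end{equation}

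For the main step, I would use the complex-oriented splitting of $F(\Sigma^\infty \bC\bP^k_+, R)$. The powers $1,u,u^2,\dots,u^k$ of the complex orientation class assemble into a map of $R$-modules $\bigvee_{i=0}^k \Sigma^{2i} R \to F(\Sigma^\infty \bC\bP^k_+, R)$, which is an equivalence because it induces an isomorphism on $\pi_*$. Passing to the inverse limit along $BS^1=\colim_k \bC\bP^k$ yields
\begin{equation}
    F(\Sigma^\infty BS^1_+, R)\simeq \lim_k F(\Sigma^\infty \bC\bP^k_+, R)\simeq \prod_{i\geq 0}\Sigma^{2i} R.
\end{equation}
Combining the adjunction $F(X\otimes Y, R)\simeq F(X, F(Y, R))$ with the fact that the right adjoint $F(X,-)$ preserves products and that $\pi_*$ commutes with products of spectra, one obtains
\begin{equation}
    R^*(X_{hS^1})\simeq \pi_{-*}\prod_{i\geq 0}\Sigma^{2i} F(X, R) \simeq \prod_{i\geq 0} R^{*+2i}(X),
\end{equation}
which is naturally identified with $R^*(X)[[u]]$ via the degree $-2$ generator $u$. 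The resulting $R^*[[u]]$-module structure matches the one induced by the map $BS^1_+\to \ast$.

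Applying $\mathfrak{F}_R^{-1}$ and $u$-adically completing gives the claim. The main input is the complex-oriented splitting of $F(\Sigma^\infty \bC\bP^k_+, R)$, which is classical for complex-oriented theories; the remaining bookkeeping---commuting the inverse limit with the right adjoint $F(X,-)$ and checking that the $u$-adic topology is the one induced by the product decomposition---is formal. Finiteness of $X$ is not essential here except insofar as it ensures there are no pathologies when identifying $\prod_{i\geq 0} R^{*+2i}(X)$ with the formal power series ring $R^*(X)[[u]]$ in the target grading.
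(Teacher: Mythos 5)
Your proof is correct and follows the same route as the paper's: the paper's proof simply asserts "it is easy to check that $R^*(X_{hS^1})=R^*(X\wedge \Sigma^\infty BS^1_+)\simeq R^*(X)[[u]]$," and your argument via the complex-oriented splitting of $F(\Sigma^\infty\bC\bP^k_+,R)$, passing to the inverse limit, and using the $F(X,-)$ adjunction is exactly the standard way to make that "easy to check" precise. Your closing remark about finiteness is also consistent with the paper, which invokes finiteness not for the lemma itself but for the follow-up identification $\tate{R}(X)\simeq \tate{R}(\bS)\otimes_{R^*}R^*(X)$ (to avoid a completed tensor product).
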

More concisely, we can write this as $\tate{R}(X)\simeq \tate{R}(\bS)\otimes_{R^*}R^*(X)$. Since $X$ is assumed to be finite, tensor product is automatically complete. Otherwise, one would need to use the completed tensor product. 
\begin{proof}
It is easy to check that $R^*(X_{hS^1})=R^*(X\wedge \Sigma^\infty BS^1_+)\simeq R^*(X)[[u]]$. The result follows from this.
\end{proof}

\subsubsection{Computations for the trivial action}
In this section, we compute the completed Tate cohomology $\tate{R}(X)$ of a finite spectrum $X$ with trivial action and filtration for various $R$.

\begin{exmp}\label{exmp:eilenmac}
Assume $R=HA$, the Eilenberg--Maclane spectrum of a commutative ring $A$. Then, $R^*=A$ and $F_R(u_1,u_2)=u_1+u_2$, the \emph{additive formal group law}. Therefore, $[n]_R(u)=nu$, and inverting it is the same as inverting $n$ and $u$. As a result, if $X$ is endowed with a trivial action and filtration, then $\tate{R}(X)=H^*(X,A\otimes_\bZ \bQ)((u))$. In particular, if $n=0$ in $A$, for some $n\geq 2$, then $\tate{R}(X)=0$. On the other hand, if $R=H\bQ$ or $H\bZ$, then $\tate{R}(X)=H^*(X,\bQ)((u))$. 
\end{exmp}
If in \eqref{eq:completedtate} we were not taking the $u$-adically completed localization, $\tate{H\bZ}(X)$ and $H^*(X,\bQ)((u))$ would match only after $u$-adically completing the former.

An important example is the following:
\begin{exmp}\label{exmp:ordinarymorava}
Let $R=K_p(n)$ be a Morava $K$-theory. Then $R^*=\bF_p[v_n,v_n^{-1}]$. As $[k]_R(u)$ is always of the form $ku+h.o.t.$, when $k$ is coprime with $p$, $[k]_R(u)$ is $u$ multiplied with a unit. Moreover, $[p]_R(u)=v_nu^{p^n}$ (see \cite[Theorem 1.3]{wurglermorava} for instance). Hence, by iteration, we see that $[k]_R(u)$ is the same as a power of $u$ multiplied by a unit (if $p^l||k$, $[k]_R(u)=\frac{k}{p^l}v_n^{\frac{p^{nl}-1}{p^n-1}}u^{p^{nl}}+h.o.t.$).  Therefore inverting all $[k]_R(u)$ is equivalent to inverting $u$. In other words, for $X$ with a trivial action and trivial filtration $\tate{K_p(n)}(X)=K_p(n)^*(X)((u))$.  
\end{exmp}
\begin{exmp}
More generally, assume $F_R$ has height exactly $n$, i.e. $[p]_R(u)$ is $u^{p^n}$ multiplied by a unit. Also assume $p$ is nilpotent in $R^*$. Therefore, if $gcd(k,p)=1$, $k$ is invertible in $R^*$. Also, as we argued in \Cref{exmp:ordinarymorava}, $[k]_R(u)=ku+h.o.t.$, and invertibility of $k$ implies that $[k]_R(u)$ is equal to $u$ times a unit. Moreover, for a general $k$ such that $p^l||k$ (i.e. $l$ is the highest dividing power of $p$ in $k$), we have
\begin{equation}
    [k]_R(u)=[\frac{k}{p^l}]_R([p^l]_R(u))= [\frac{k}{p^l}]_R([p]_R([p]_R([p]_R(\dots[p]_R(u)\dots)))),
\end{equation}
i.e. $[k]_R(u)$ is obtained by an iterative application of $[\frac{k}{p^l}]_R(u)$ and of $[p]_R(u)$ $l$ times. As a result, $[k]_R(u)$ is equal to a power of $u$ multiplied by a unit. 
Therefore, inverting $u$ suffices to invert all $[k]_R(u)$. This implies $\tate{R}(X)=R^*(X)((u))$.
\end{exmp}
\begin{exmp}
Let $R=k_p(n)$ denote the connective Morava $K$-theory, which satisfies $R^*=\bF_p[v_n]$ and $[p]_R(u)=v_nu^{p^n}$. The calculation of $[k]_R(u)$ is the same as in \Cref{exmp:ordinarymorava}, but this time inverting these elements require inverting $u$ and $v_n$. Therefore, $\tate{k_p(n)}(X)=K_p(n)^*(X)((u))$, which is analogous to $H\bZ$ leading to rational cohomology in \Cref{exmp:eilenmac}. 
\end{exmp}
The following example will also be important in the subsequent sections:
\begin{exmp}\label{exmp:deformedmorava}
Let $\widetilde K_p(n)$ be the integral Morava $K$-theory, i.e. the complex oriented spectrum satisfying $\widetilde K_p(n)^*=\bZ_p[v_n,v_n^{-1}]$ and $[-p]_{\widetilde K_p(n)}(u)=-pu+v_nu^{p^n}$. 
Let $K_{p^k}(n)$ denote its quotient by $p^k$; see \cite[Proposition 5.14]{abouzaidmcleansmithglobal1}. For $R=K_{p^k}(n)$, the same formula $[-p]_R(u)=-pu+v_nu^{p^n}$ holds. For $p\not|m$, $[mp^l]_R(u)$ is of the form $u^{p^{nl}}+ O(p)+o(u^{p^{nl}})$ times a unit, where $O(p)$ denotes a multiple of $p$ and $o(u^{p^{nl}})$ denotes the sum of the terms such that the exponent of $u$ is larger than $p^{nl}$. One can also write this as $u^{p^{nl}}+ O(p)$ times a unit (as $u^{p^{nl}}+o(u^{p^{nl}})$ is $u^{p^{nl}}$ times a unit). As $p$ is nilpotent, after inverting $u=[1]_R(u)$, elements of the form $u^{p^{nl}}+ O(p)$ automatically become units. As a result, for $X$ with a trivial action and filtration, $\tate{R}(X)=R^*(X)((u))$. The same holds for $R=\widetilde K_p(n)$ after $p$-completing both sides, i.e. $\tate{R}(X)_p^\wedge=R^*(X)((u))_p^\wedge$. The reason is that in this case, where $m=p^lm_0$, where $p$ does not divide $m_0$, $[m]_R(u)$ is given by a power of $u$ times a unit times $m_0$ plus $O(p)$. After $p$-completion, this becomes the same as a power of $u$ times a unit. Hence, since $u$ is invertible, all $[m]_R(u)$ are.
\end{exmp}
The following example will be important in recovering information about complex $K$-theory from Floer theory: 
\begin{exmp}\label{exmp:ku}
Recall that $KU$, the complex $K$-theory spectrum, has $KU^*=\bZ[\beta,\beta^{-1}]$, where $|\beta|=2$ and $F_{KU}(u_1,u_2)=u_1+u_2+u_1u_2=(1+u_1)(1+u_2)-1$, the \emph{multiplicative formal group law}.
Let $p$ be a prime and $R=KU/p^k$ so that $R^*=\bZ/p^k[\beta,\beta^{-1}]$ and $[p^l]_R(u)={p^l\choose 1}u+{p^l\choose 2}u^2+\dots+u^{p^l}$, which implies $[mp^l]_R(u)=u^{p^l}+O(p)$ times a unit as in \Cref{exmp:deformedmorava}, for $p\not|m$. As a result, inverting $u$ suffices as before and for $X$ with a trivial action and filtration, $\tate{R}(X)=R^*(X)((u))$. The same holds for $KU$ after simultaneous completion of both sides at each $p$, i.e. $\tate{KU}(X)^\wedge=KU^*(X)((u))^\wedge$, where for an abelian group $A$, $A^\wedge$ denotes the inverse limit of $A/nA$, $n\in\bN$. 
\end{exmp}
One can make such concrete calculations for other complex oriented spectra, such as Brown--Peterson spectrum (or its variants such as its localization at $v_1\in BP^*$). This could possibly let one to recover information about $MU$ from Floer theory.


\section{$S^1$-equivariant spectral symplectic cohomology}
\subsection{Conventions}
Let $(M, \omega= d\lambda)$ be an exact symplectic manifold. Set $S^1= \bR/\bZ$. Given a Hamiltonian $H: S^1 \times M \to \bR$, we write $H_t(-)= H(t,-)$. The Hamiltonian vector field is defined by $\omega(X_t, -)= -dH_t$.  The $H$-perturbed action functional is the map
\begin{align}\label{equation:action-functional}
    \cA_H: \mathcal{L}M &\to \bR \\
    x &\mapsto \int_{S^1} (x^*\lambda - H_t(x(t)) )dt 
\end{align}
This is the opposite of conventions of \cite{abouzaid2013symplectic,seidel2008biased}. 
Given an auxiliary (possibly time-dependent) almost-complex structure $J$, we shall consider solutions $u: \bR \times S^1 \to M$ to Floer's equation
\begin{align}\label{equation:floer}
    \partial_s u + J(\partial_t u - X_{H_t})=0 
\end{align}
which are asymptotic to closed trajectories $x^\pm= \lim_{s \to \pm \infty}u(s,-)$ of the Hamiltonian $H_t$. We call $x^+$ the input and $x^-$ the output. Solutions to Floer's equation \eqref{equation:floer} are formally negative gradient trajectories of \eqref{equation:action-functional}, and we have the identity $\cA_H(x_-)  \leq \cA_H(x_+)$. In other words, the Floer trajectories decrease the action. 
More generally, if $(H^s)_{s \in \bR}$ is a family of Hamiltonians which is independent of $s$ for $s \in \bR-(0,1)$ and satisfies $\partial_s H^s \leq 0$, we can generalize \eqref{equation:floer} to the \emph{continuation equation} $\partial_s u+ J(\partial_t u - X^{H^s_t})=0$ and we have $\cA_{H_0}(x_-) \leq \cA_{H_1}(x_+)$, i.e. the continuation trajectories also decrease the action. 

\subsection{Liouville manifolds}
Recall that a \emph{Liouville manifold} $(M, \lambda)$ is an exact symplectic manifold with the property that there exists an embedding 
\begin{equation}
    \iota:(S^+(\partial_\infty M), \lambda_{can}) \hookrightarrow (M, \lambda), \iota^*\lambda= \lambda_{can}
\end{equation} 
of the positive symplectization of a co-oriented contact manifold $(\partial_\infty M, \xi_\infty)$ which covers a neighborhood of infinity.\footnote{Given a contact manifold $(Y, \xi)$, its symplectization $SY:= \{ \alpha \in T^*Y \mid \alpha(\xi) \neq 0\}$ is an exact symplectic manifold with respect to the canonical $1$-form $\lambda_{can}$. We say that $(Y, \xi)$ is co-orientable if $SY$ is disconnected, and a co-orientation amounts to a choice of connected component $S^+Y$ which we call the positive symplectization. } A choice of contact form $\ker \lambda_\infty= \xi_\infty$ on $\partial_\infty M$ induces a decomposition
\begin{equation}\label{equation:liouville-decomposition}
	M= \overline{M} \cup_{\partial \overline{M}} [1, \infty) \times \partial_\infty M,
\end{equation} 
where $\lambda$ restricts on $[1, \infty) \times \partial_\infty M$ to $r\lambda_\infty$. The coordinate $r \in [1, \infty)$ shall be called the \emph{Liouville parameter} and the vector field $Z$ on $M$ defined by the property $i_Z d\lambda = \lambda$ shall be called the \emph{Liouville vector field}. Note that for a given Liouville manifold $(M, \lambda)$, the data of contact form $\ker \lambda_\infty= \xi_\infty$ is equivalent to the data of a contact hypersurface transverse to $Z$. 

A Hamiltonian $H: M \to \mathbb{R}$ is said to be \emph{linear at infinity} if $Z H = H$ outside a compact set. Any Hamiltonian which satisfies $H(r, m)= C r$ outside a compact set, with respect to the (non-canonical) decomposition \eqref{equation:liouville-decomposition}, is manifestly linear at infinity. The constant $C$ is called the \emph{slope} of $H$. We emphasize that the notion of slope only makes sense once we have fixed a decomposition \eqref{equation:liouville-decomposition}, i.e.\ a contact form $\ker \lambda_\infty= \xi_\infty$. For instance, scaling $\lambda_\infty$ scales the slope by a corresponding amount. 

If $R$ is the Reeb vector field on $(\partial_\infty M, \lambda_\infty)$ and $H(r, m)= h(r)$, then $X_H= h'(r) R$ on the collar $[1, \infty) \times \partial_\infty M$. So closed time-$1$ orbits of $H$ on the level $\{r=r_0\} \subset [1, \infty) \times \partial_\infty M$ correspond to closed Reeb orbits of length $h'(r_0)$. The action of such an orbit is $r_0h'(r_0) -h(r_0)$.

A Liouville manifolds is said to be \emph{stably framed} if it is equipped with a fixed isomorphism $T_M\oplus\bC^k\cong\bC^{N+k}$ of symplectic vector bundles, for some $k\geq 0$. \emph{For the remainder of this paper, we restrict our attention to stably framed Liouville manifolds.} For this reason, we will typically drop the adjective ``stably framed''. The stable framing will be used to define $SH(M,\bS)$ and $SH_S(M,\bS)$. In fact, we will mostly need symplectic cohomology with coefficients in complex oriented spectra, an invariant which can be defined under weaker assumptions on $M$. 

\subsection{Review of (spectral) symplectic cohomology}\label{subsection:spectral-sh}
The construction of symplectic cohomology over the sphere spectrum was first carried out by Large \cite{largethesis}. However, our presentation will rather follow \cite{cotekartalequivariantfloerhomotopy} which follows different conventions.

Let $M$ be a Liouville manifold. Let $H: S^1 \times M \to \bR$ be a Hamiltonian which is non-degenerate and linear at infinity. Let $J: S^1 \to \mathcal{J}(M)$ be a generic family of cylindrical almost-complex structures. To this data, one can associate a \emph{flow category} $\cM_{H, J}$ such that
\begin{itemize}
	\item $ob(\cM_{H,J})$ is the set of $1$-periodic orbits of $H$
	\item the morphisms $\cM_{H,J}(x,y)$ is given by the moduli of Floer trajectories from $x$ to $y$ and it forms a manifold with corners with a stable trivialization of its tangent bundle
	\item the composition maps are smooth embeddings into boundary strata, which is covered by the images of the composition maps
\end{itemize}

By out standing assumption that $M$ is stably framed, $\cM_{H, J}$ can be naturally enhanced to a framed flow category.

To the frame flow category $\cM_{H,J}$, one associates a spectrum $HF(H; \bS):= |\cM_{H,J}|$ called the \emph{geometric realization} (we omit $J$ from the notation). To define this, one uses Pontryagin--Thom collapse and obtains a ``chain complex in spectra''. Then there is a realization construction for such complexes. 

Similarly to classical Floer theory, given a generic homotopy $(H_s, J_s)_{s \in \bR}$ where $\partial_s H_s \leq 0$, $(H_s, J_s)=(H,J)$ for $s\gg 0$ and $(H_s, J_s)=(H',J')$ for $s\ll 0$, one has continuation maps $HF(H;\bS)\to HF(H';\bS)$. 
To define these maps, one assembles the continuation trajectories into a \emph{framed flow bimodule} over $\cM_{H,J}$-$\cM_{H',J'}$, inducing a map of geometric realizations. 

Consider a sequence of Hamiltonians $H_1,H_2,\dots$, such that $H_{i+1}> H_i$ outside a compact subset of $M$ and such that the slope goes to infinity. We obtain a sequence $HF(H_1,\bS)\to HF(H_2,\bS)\to \dots$, and let \emph{the spectral symplectic cohomology} $SH(M,\bS)$ be the homotopy colimit of this sequence. By construction, it is a filtered spectrum. Any other choice of data (such as Hamiltonians $H_i$, almost complex structures, continuation data) gives a homotopy equivalent spectrum, and an equivalent filtered spectrum in the sense of \Cref{definition:equivalence}.

\begin{rk}
    A key contribution of \cite{largethesis}, which builds on \cite{foooexponential}, is smooth gluing. Given an \emph{exact} symplectic manifold with suitably generic Floer data, \cite{largethesis} proves the long-expected but highly non-trivial result that:
    \begin{itemize}
        \item[(i)] moduli spaces of $J$-holomorphic curves (including continuation maps, etc) naturally admit the structure of a smooth manifold with corners, with smooth evaluation maps; c.f. \cite[Sec.\ 6]{largethesis}.
        \item[(ii)] given a stable framing of the ambient manifold, one can coherently frame the above moduli spaces;  c.f. \cite[Sec.\ 7,8]{largethesis}.
    \end{itemize}
In \cite{cotekartalequivariantfloerhomotopy}, the authors further explained how to extend \cite{largethesis} to the setting of coupled moduli spaces arising from the Borel construction (this does not require Morse--Bott gluing). \Cref{subsec:equivariantPSS} and \Cref{subsec:localfloer} of this paper rely on the above results. In addition to \cite{largethesis} (which is publicly available but unpublished), we also refer the reader to \cite[Sec.\ 8]{porcelli2024bordism} which explains many of the key ideas from \cite{largethesis}.
\end{rk}
\subsection{Review of $S^1$-equivariant spectral symplectic cohomology} 
In \cite{cotekartalequivariantfloerhomotopy}, the authors extended the realization construction for flow categories to the Morse--Bott and equivariant settings, and used this to define $S^1$-equivariant versions of $HF(H,\bS)$ and $SH(M,\bS)$ by using what is often referred as the Borel construction. 

\subsubsection{Morse--Bott flow categories}

A \emph{(framed) Morse--Bott flow category} $\cM$ is similar to an ordinary flow category, except that one allows $ob(\cM)$ to be a finite disjoint union of smooth closed manifolds. If $X,Y\subset ob(\cM)$ denote two components and $\cM(X,Y)$ denotes the morphisms from a point on $X$ to one on $Y$. We assume the domain and target maps $\cM(X,Y)\to X,Y$ are smooth, and satisfy a transversality condition; see \cite[Def.\ 2.1]{cotekartalequivariantfloerhomotopy}. The motivating example is the category associated to a Morse--Bott function $f:N\to \bR$: the object space is the union of critical manifolds of $f$ and the morphisms are given by the broken negative gradient trajectories (with respect to a generic metric). The stable framing condition is generalized as a framing of the relative tangent bundle $T_{\cM(X,Y)}-T_X$ twisted by virtual bundles on $X$ and $Y$. See \cite{cotekartalequivariantfloerhomotopy} for more details. 

Given the data of a framed flow category, \cite{cotekartalequivariantfloerhomotopy} explains how to associate a spectrum $|\cM|$, also called the \emph{geometric realization}. If the category and the framings are equivariant with respect to a compact group action, $|\cM|$ is also a genuine equivariant spectrum.

\subsubsection{Data for the Borel construction}\label{subsubsection:data-borel}
Let $M$ be a Liouville manifold and fix a non-degenerate contact form $\lambda_\infty$ on $\partial_\infty M$. Let $a>0$ be a real number which is not equal to the length of a Reeb orbit for $\lambda_\infty$. 

Let $S:= S^\infty= \{(z_1, z_2, \dots) \mid \sum_i |z_i|^2=1,  z_i \in \mathbb{C}\}$. Then $S$ admits a free $S^1$-action with quotient $\pi: S \to \mathbb{CP}^\infty$. There is a standard Morse--Bott function $\tilde{f}: S \to \bR, \tilde{f}(z_1,z_2, \dots)= \sum_i i |z_i|$. It descends to a Morse function on $\mathbb{CP}^\infty$. We consider a generic metric $g$ on $\mathbb{CP}^\infty$ and choose an equivariant lift $\tilde{g}$ to $S^\infty$. The lift determines a connection on the principal $S^1$-bundle $S^\infty\to\mathbb{CP}^\infty$ and we can assume for convenience (by choosing $g$ suitably) that it is flat near the critical sets. Therefore, for every critical set $S^1\cong X\subset S^\infty$, we have a canonical local trivialization $U\times X\subset S^\infty\to \mathbb{CP}^\infty\supset U$.  
 


Let $H: S \times S^1 \times M \to \bR$ be a Hamiltonian having the following properties:
\begin{itemize}
    \item $H$ is invariant under the diagonal $S^1$ action.
    \item $H_z= H_z(-, -): S^1 \times M \to \bR$ has slope $a>0$ for all $z \in S$, and $H_z$ is non-degenerate for all $z \in crit(\tilde{f})$.
    \item in a canonically trivialized neighborhood $U\times X$ of a critical set $H_z$ does not depend on $z\in U$. In other words, $dH$ kills the horizontal directions.
\end{itemize}
Similarly, we let $J: S \times S^1 \to \operatorname{End}(TM)$ be a generic family of almost-complex structures invariant under the diagonal $S^1$ action, cylindrical outside a (uniform) compact subset of $M$, and constant near the critical sets. 

To this data, we associate an $S^1$-equivariant (framed) Morse--Bott flow category $\cM_{\tilde f, H}$ ($J$ is omitted from the notation). The object space is given by pairs $(a,x)$, $a\in crit(\tilde f)$, $x\in orb(H_a)$. They form a disjoint union of $S^1$-torsors. The morphisms are given by the pairs $(\gamma, u)$, where $\gamma:\bR\to S^\infty$ and $u:\bR \times S^1\to M$, satisfying the parameterized Floer equation
\begin{equation}\label{equation:borel-equation}
    \begin{cases}
         \dot{\gamma}+ \nabla_{\tilde{g}} \tilde{f}(\gamma) &= 0 \\
       \partial_s u + J_{\gamma(s)}(\partial_t u - X_{H_{\gamma(s)}}) &=0
    \end{cases}
\end{equation}
asymptotic to $(a_\pm, x_\pm)$. We define $HF_S(H,\bS):=|\cM_{\tilde f, H}|$. 

Similar to before, there are equivariant continuation maps $HF_S(H,\bS)\to HF_S(H',\bS)$, and we define $SH_S(M,\bS)$ to be colimit of $HF_S(H,\bS)$ as the slope of $H$ goes to infinity. We obtain a filtered equivariant spectrum, whose filtration is well-defined up to equivalence.

The construction gives a genuine $S^1$-equivariant spectrum such that $SH_S(M,\bS)\simeq SH(M,\bS)$ as non-equivariant spectra \cite{cotekartalequivariantfloerhomotopy}. However, this genuine equivariant structure is not the one one expects intuitively: in particular the geometric fixed points are trivial for any non-trivial subgroup of $S^1$. We are interested in the underlying Borel equivariant structure on $SH_S(M,\bS)$, while remembering the filtration. 
\begin{exmp}
When $M$ is a point, $SH_S(M,\bS)\simeq \Sigma^\infty S_+$ as genuine equivariant spectra. This follows from \cite[Proposition 4.7]{cotekartalequivariantfloerhomotopy}.
\end{exmp} 
\begin{rk}
Remembering both the filtration and the circle action on $SH_S(M; \bS)$ is crucial for our purposes. Indeed, it is presumably true $SH_S(M \times \mathbb{C}; \bS)\simeq 0$ as a spectrum. \footnote{If $SH(M; \mathbb{S})$ is bounded below as a spectrum, then the claim follows easily from Hurewicz and the equivalence $SH_S(M; \mathbb{S}) \simeq SH(M; \mathbb{S})$ as non-equivariant spectra \cite[Prop.\ 5.12]{cotekartalequivariantfloerhomotopy}. The general statement can presumably be proven using the methods in \cite{seidel2008biased} or \cite{cieliebak2002handle}, but we are not aware of a reference. This claim is not used in this paper.} This implies the same as a Borel equivariant spectrum. On the other hand, if we remember the filtration, but forget about the equivariant structure, we expect the filtration on $SH_S(M \times \mathbb{C})\simeq 0$ to be equivalent to the trivial one. This can presumably be established by the second argument of \cite[(3f)]{seidel2008biased}. In contrast, our main result will imply that $SH_S(M; \bS)$ is never zero as an object of $\Sp^{BS^1}_{fil}$, for any Liouville manifold $M$.
\end{rk}

\section{Approximately autonomous Hamiltonians and the completed Tate cohomology of $SH_S(M,\bS)$}

\subsection{The main theorem and its applications}
The goal of this section is to prove the following theorem:
\begin{thm}\label{thm:tatecohforsh}
Given a complex oriented ring spectrum $R$, 
\begin{equation}
    \tate{R}(SH_S(M,\bS)) \simeq \tate{R}(\Sigma^\infty (M/\partial_\infty M)) \simeq \mathfrak{F}_R^{-1}R^*(M,\partial_\infty M)[[u]]\simeq \mathfrak{F}_R^{-1}R_{2n-*}(M)[[u]]
\end{equation}
where $\Sigma^\infty (M/\partial_\infty M)$ is endowed with the trivial filtration and trivial $S^1$-action. Here, $2n=dim_\bR(M)$.
\end{thm}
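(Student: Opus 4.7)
The plan is to compute $\tate{R}(SH_S(M,\bS))$ by choosing a cofinal sequence of Hamiltonians whose orbit structure is geometrically transparent, and exploiting the vanishing phenomenon of \Cref{lem:nontrivaction}. I would work with an approximately autonomous cofinal family $H_n$, each of the form $h_n(r)$ on the cylindrical end for a convex function $h_n$ whose slope tends to infinity, and $C^2$-small and Morse on the compact part $\overline{M}$. After the minimal time-dependent perturbation needed to attain non-degeneracy, the $1$-periodic orbits of $H_n$ split into two classes: constant orbits lying over $crit(h_n|_{\overline{M}})$, whose action is small, and non-constant orbits concentrated on level sets $\{r = r_0\}$ where $h_n'(r_0)$ equals the period of a closed Reeb orbit of $(\partial_\infty M, \lambda_\infty)$. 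The second class comes in $S^1$-orbit families; if the underlying Reeb orbit is a $k$-fold cover of a simple orbit, the corresponding family is isomorphic to $S^1/C_k$ as an $S^1$-space, with $k\geq 1$.

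Next, I would refine the slope filtration by the action filtration on each $HF_S(H_n, \bS)$, producing cofiber sequences whose graded pieces are the local equivariant Floer spectra supported on each critical submanifold of $\cA_{H_n}$. The key local computation — the ``key Floer theoretic input'' advertised in the introduction — is that the local equivariant Floer homotopy type near a non-constant Reeb orbit family is equivalent, up to a shift, to the Thom spectrum of an equivariant virtual vector bundle over $S^1/C_k$. I would establish this by building a Morse--Bott model for the equivariant flow category in a neighborhood of such an orbit family, and computing the relevant stable framings using the stable framing of $M$. For the constant-orbit stratum, a standard identification of low-action Floer theory with Morse theory, combined with the stable framing, should yield that its total contribution is equivalent to a shift of $\Sigma^\infty(M/\partial_\infty M)$ consistent with the Poincar\'e--Lefschetz dualization $R_{2n-*}(M)\cong R^*(M,\partial_\infty M)$.

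Armed with this stratification, I would apply $\tate{R}(-)$ to the associated graded. By \Cref{lem:nontrivaction}, every non-constant Reeb stratum contributes zero, since its isotropy group is some $C_k$ with $k\geq 1$. Only the constant-orbit stratum survives, and by \Cref{lem:trivaction} it contributes $\tate{R}(\Sigma^\infty(M/\partial_\infty M))$, with the correct shift producing $\mathfrak{F}_R^{-1}R_{2n-*}(M)[[u]]$. It then remains to argue that the continuation maps between successive $H_n$ respect the action filtration and the local normal forms, so that the identifications at finite slope stabilize and agree in the limit over the filtration in the sense of \Cref{definition:equivalence}.

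The main obstacle I anticipate is the local equivariant Floer computation near a Reeb orbit family: one must produce an explicit Morse--Bott equivariant flow category near such a family, trivialize its framing data compatibly with the Borel parameterization by $S^\infty$, and verify that the resulting geometric realization is the stated Thom spectrum over $S^1/C_k$. A secondary subtlety is ensuring that the $u$-adic completion and the localization at $\mathfrak{F}_R$ interact well with the limit over the slope filtration, which requires finite-type control of each $HF_S(H_n, \bS)$ and a careful verification that vanishing on the associated graded implies vanishing after completed localization.
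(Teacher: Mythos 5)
Your proposal follows essentially the same strategy as the paper: approximately autonomous Hamiltonians in the style of Zhao, the action filtration producing cofiber sequences with graded pieces given by the constant-orbit stratum and by local equivariant Floer spectra over $S^1/C_k$ families, vanishing of the latter via \Cref{lem:nontrivaction} after completed localization, and an equivariant PSS-type argument identifying the constant stratum with $\Sigma^\infty(M/\partial_\infty M)$. The key Floer input (\Cref{subsec:localfloer}) and the equivariant PSS map (\Cref{subsec:equivariantPSS}) are exactly the steps you flag as the main analytic burden.

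One point where your phrasing is off and would not survive as written: you say the continuation maps between successive $H_n$ ``respect the action filtration and the local normal forms.'' They do not. Continuation maps only decrease action, and they freely mix the strata --- an orbit over a length-$\ell$ Reeb orbit in $H_n$ has no preferred target stratum in $H_{n+1}$, so a ``refined'' filtration interleaving slope and action levels does not exist as a filtration on $SH_S(M,\bS)$. The paper instead treats the finite action filtration as an auxiliary device \emph{within} each fixed slope level to show that the single map $\tate{R}(HF_S(H^{a_k}_\epsilon,\bS)) \to \tate{R}(\Sigma^\infty(M/\partial_\infty M))$ (induced by the equivariant PSS map, which is the inclusion of $F^0$) is an equivalence for each $k$. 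The compatibility across slopes is then obtained not from the action filtration but from the statement that the equivariant PSS map commutes with continuation (property \ref{item:f0eq}), so that after replacing the slope filtration by the equivalent one with $\Sigma^\infty(M/\partial_\infty M)$ prepended, all transition maps induce the identity on $\tate{R}$ and the defining limit in \eqref{eq:completedtate} is constant. This is a small but load-bearing adjustment: without routing through the PSS compatibility, the limit over $k$ in the definition of $\tate{R}$ would remain uncontrolled.
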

Recall that $\mathfrak{F}_R=\{[k]_R(u):k\in\bN\}$ and $\mathfrak{F}_R^{-1}R^*(M,\partial_\infty M)[[u]]$ is $R^*(M,\partial_\infty M)[[u]]$ localized at $[k]_R(u)$, for every $k\geq 1$. 
The second isomorphism follows from \Cref{lem:trivaction} and the third follows from Poincar\'e duality (which holds for $R$ as $TM$ is complex oriented). As we already noted, we could equivalently write the last term as $R_{2n-*}(M)\otimes_{R^*} \tate{R}(\bS)$.

Before moving onto the proof, we discuss some corollaries in the light of examples given in \Cref{sec:filteredeqspec}. 
\begin{cor}\label{cor:eilenbergmaclanetatesh}
We have $\tate{H\bQ}(SH_S(M,\bS))\simeq \tate{H\bZ}(SH_S(M,\bS))\simeq H_{2n-*}(M,\bQ((u)))$. On the other hand  $\tate{H\bF_p}(SH_S(M,\bS))\simeq 0$.
\end{cor}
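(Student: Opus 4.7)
The plan is to apply \Cref{thm:tatecohforsh} and reduce to the coefficient computation already carried out in \Cref{exmp:eilenmac}. By \Cref{thm:tatecohforsh}, for any complex oriented ring spectrum $R$ we have
\begin{equation}
\tate{R}(SH_S(M,\bS)) \;\simeq\; \mathfrak{F}_R^{-1} R_{2n-*}(M)[[u]],
\end{equation}
where $\mathfrak{F}_R = \{[k]_R(u) : k \geq 1\}$ and the localization is $u$-adically completed. So the task reduces to computing $\mathfrak{F}_R^{-1} R_{2n-*}(M)[[u]]$ for $R = H\bQ$, $H\bZ$, $H\bF_p$.

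Next I would invoke the fact that each Eilenberg--MacLane spectrum $R = HA$ carries the additive formal group law, so that $[k]_R(u) = ku$ as recorded in \Cref{exmp:eilenmac}. Inverting the family $\{ku : k \geq 1\}$ in $R_{2n-*}(M)[[u]]$ is therefore equivalent to inverting $u$ together with every positive integer, i.e.\ tensoring the coefficient group with $\bQ$ and then passing to $((u))$. Concretely,
\begin{equation}
\mathfrak{F}_{HA}^{-1}\big( H_{2n-*}(M,A)[[u]] \big) \;\simeq\; H_{2n-*}(M,\, (A \otimes_\bZ \bQ))((u)).
\end{equation}

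Plugging in the three cases then gives the desired conclusions: for $A = \bQ$ and for $A = \bZ$ one has $A \otimes_\bZ \bQ = \bQ$, producing $H_{2n-*}(M,\bQ((u)))$ in both cases; for $A = \bF_p$ one has $\bF_p \otimes_\bZ \bQ = 0$, so $\tate{H\bF_p}(SH_S(M,\bS)) \simeq 0$. Since all the real content has been packaged into \Cref{thm:tatecohforsh} and \Cref{exmp:eilenmac}, there is no substantive obstacle here; the only thing to be careful about is the order of operations (the $u$-adic completion is taken \emph{after} the localization, which is harmless once $A$ is rational since $\bQ[[u]][u^{-1}]$ is already $u$-adically complete in the relevant sense, yielding $\bQ((u))$).
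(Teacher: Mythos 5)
Your proof is correct and follows essentially the same route as the paper: the paper's own proof is simply the sentence "This follows immediately by applying Theorem \ref{thm:tatecohforsh} to Example \ref{exmp:eilenmac}," and you have filled in exactly the same two ingredients in exactly the same way (identifying $[k]_{HA}(u)=ku$ via the additive formal group law, then observing that inverting all $ku$ amounts to rationalizing $A$ and inverting $u$). Your closing remark about the order of localization and $u$-adic completion matches the caution the paper records in its discussion of Definition \ref{defn:completedtate}.
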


This follows immediately by applying \Cref{thm:tatecohforsh} to \Cref{exmp:eilenmac}. Note that for a commutative ring $A$, $\tate{HA}(SH_S(M,\bS))$ depends only on the $S^1$-equivariant (co)filtered homotopy type of $SH^*(M,A)$, which does not require Floer homotopy theory to define. For $A=\bQ$, this reaffirms the conclusion of \cite{zhaoperiodic, albers2016symplectic} that the symplectic cohomology with its $S^1$-action and filtration recovers the rational cohomology. For $A=\bZ$ this is a generalization of the computations for a disc and annulus in \cite[Sec.\ 8]{zhaoperiodic}. In this way, we lose the torsion information. See also \cite[Sec.\ 5.1]{albers2016symplectic} for related computations. 

Similarly, by applying \Cref{thm:tatecohforsh} to \Cref{exmp:ordinarymorava} and \Cref{exmp:deformedmorava}, we obtain
\begin{cor}\label{cor:deformedmoravash}
$\tate{K_{p^k}(m)}(SH_S(M,\bS))\simeq K_{p^k}(m)_{2n-*} (M)((u))$.
\end{cor}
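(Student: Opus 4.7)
The plan is to obtain \Cref{cor:deformedmoravash} as a direct specialization of \Cref{thm:tatecohforsh} to the ring spectrum $R=K_{p^k}(m)$, the only substantive step being to simplify the localized, completed module $\mathfrak{F}_R^{-1}R_{2n-*}(M)[[u]]$ appearing on the right hand side. By \Cref{thm:tatecohforsh}, we already have the chain of equivalences
\begin{equation}
\tate{K_{p^k}(m)}(SH_S(M,\bS)) \simeq \tate{K_{p^k}(m)}(\Sigma^\infty(M/\partial_\infty M)) \simeq \mathfrak{F}_{K_{p^k}(m)}^{-1} K_{p^k}(m)_{2n-*}(M)[[u]],
\end{equation}
where the last identification combines \Cref{lem:trivaction} (applicable because $\Sigma^\infty(M/\partial_\infty M)$ is endowed with the trivial $S^1$-action and trivial filtration) with Poincar\'e duality, which is available since the stable framing of $M$ provides a complex orientation compatible with $R$.

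Thus the only thing left to check is that, for $R=K_{p^k}(m)$, the localization at the entire family $\mathfrak{F}_R=\{[n]_R(u):n\geq 1\}$, taken inside the $u$-adically completed setting, collapses to the single localization $u^{-1}(-)$ (equivalently, $[1]_R(u)^{-1}(-)$). This is precisely the content of the second half of \Cref{exmp:deformedmorava}: writing any positive integer as $n=mp^l$ with $p\nmid m$, the iterated $p$-series identity $[-p]_R(u)=-pu+v_m u^{p^m}$ gives, after expansion, $[n]_R(u)=u^{p^{ml}}\cdot(\text{unit})+O(p)$, i.e.\ a power of $u$ times a unit, modulo $p$. Since $p^k=0$ in $R^*$, once $u$ has been inverted every element of the form ``power of $u$ plus something divisible by $p$'' is already a unit (as it differs from a unit by a nilpotent element), so the additional localizations in $\mathfrak{F}_R$ are automatic. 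Therefore
\begin{equation}
\mathfrak{F}_R^{-1} K_{p^k}(m)_{2n-*}(M)[[u]] \;\simeq\; K_{p^k}(m)_{2n-*}(M)[[u]][u^{-1}],
\end{equation}
and the $u$-adic completion of the right hand side is exactly $K_{p^k}(m)_{2n-*}(M)((u))$.

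There is no hard step in this deduction; the only point requiring any care is bookkeeping with the completion, since in general $u$-adic completion does not commute with localization (as emphasized after \Cref{defn:completedtate}). The key observation making things work is that $K_{p^k}(m)_{2n-*}(M)$ is a finitely generated module over $R^*=\bZ/p^k[v_m,v_m^{-1}]$ (indeed $M$ is of finite type, so $K_{p^k}(m)_*(M)$ is finitely generated, being a quotient of a finitely generated integral Morava $K$-homology), so completion commutes with the localization here and the right hand side is literally a Laurent series module $K_{p^k}(m)_{2n-*}(M)((u))$ with coefficients in the homology. Putting these observations together yields the stated isomorphism.
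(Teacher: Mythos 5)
Your proposal is correct and follows essentially the same route as the paper: apply \Cref{thm:tatecohforsh} to reduce to the trivial-action/trivial-filtration computation, then invoke the formal-group calculation of \Cref{exmp:deformedmorava} to see that localizing at all of $\mathfrak{F}_{K_{p^k}(m)}$ collapses to inverting $u$, yielding $K_{p^k}(m)_{2n-*}(M)((u))$. (One small notational caution: you reuse $m$ both for the Morava height and for the prime-to-$p$ factor of the integer $n=mp^l$, which makes the displayed exponent $p^{ml}$ misleading; the exponent should involve only the height.)
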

Our second main statement is the following theorem, which follows from \Cref{cor:deformedmoravash}:
\begin{thm}\label{thm:hlgrecovery}
For $2(p^m-1)>dim_\bR(M)$ (or for $4(p^m-1)>dim_\bR(M)$ for Weinstein $M$), the groups $\tate{K_{p^k}(m)}(SH_S(M,\bS))$ and $H_{2n-*}(M,\bZ[v_m,v_m^{-1}]/p^k)((u))$ are isomorphic. 
\end{thm}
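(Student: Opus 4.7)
The plan is to reduce via Corollary~\ref{cor:deformedmoravash} to showing that, under the stated dimension hypothesis, $K_{p^k}(m)_*(M) \cong H_*(M;\bZ[v_m,v_m^{-1}]/p^k)$ as graded abelian groups. Corollary~\ref{cor:deformedmoravash} already identifies $\tate{K_{p^k}(m)}(SH_S(M,\bS))$ with $K_{p^k}(m)_{2n-*}(M)((u))$, so once the displayed isomorphism is established for $K_{p^k}(m)$-homology of $M$, applying $(-)((u))$ and invoking Poincar\'e duality (available since $TM$ is complex-oriented by the stable framing) yields the claimed identification with $H_{2n-*}(M;\bZ[v_m,v_m^{-1}]/p^k)((u))$.

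I would prove the displayed isomorphism by arguing that the Atiyah--Hirzebruch spectral sequence
\begin{equation*}
    E^2_{s,t} = H_s(M;K_{p^k}(m)_t) \;\Longrightarrow\; K_{p^k}(m)_{s+t}(M)
\end{equation*}
collapses at $E^2$. The key input is that, just as for classical Morava $K(m)$ at $p$ (Wuergler, Yagita), the formula $[p]_R(u) = -pu + v_m u^{p^m}$ recalled in Example~\ref{exmp:deformedmorava} forces $d_r = 0$ for $2 \leq r < 2p^m-1$, and the first potentially nontrivial differential $d_{2p^m-1}$ has bidegree $(-(2p^m-1),\,2p^m-2)$. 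A general Liouville manifold $M$ of real dimension $2n$ deformation retracts onto its compact core, a $2n$-manifold with nonempty boundary, hence has the homotopy type of a CW complex of dimension at most $2n-1$; a Weinstein manifold retracts further onto its isotropic skeleton of real dimension at most $n$. Under the hypothesis $2(p^m-1) > \dim_{\bR}(M)$ (respectively $4(p^m-1) > \dim_{\bR}(M)$ for Weinstein $M$), the range of $s$ for which $E^2_{s,t} \neq 0$ is too narrow to allow both source and target of $d_{2p^m-1}$ to be simultaneously nonzero; the same degree count immediately kills all longer differentials. Hence $E^\infty = E^2$.

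The main obstacle I anticipate is upgrading the collapse to an honest isomorphism of graded abelian groups rather than merely of the associated graded of some filtration. Because $K_{p^k}(m)_* = \bZ[v_m,v_m^{-1}]/p^k$ is $2(p^m-1)$-periodic in the internal degree $t$, and the homological dimension of $M$ is strictly less than $2(p^m-1)$ (resp.\ $4(p^m-1)$), for each total degree $s+t$ there is at most one filtration level $s$ contributing to $E^\infty_{s,t}$, so the filtration on $K_{p^k}(m)_*(M)$ is trivial and no extension problem arises. Plugging the resulting isomorphism $K_{p^k}(m)_*(M) \cong H_*(M;\bZ[v_m,v_m^{-1}]/p^k)$ back into Corollary~\ref{cor:deformedmoravash} gives the theorem. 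A secondary subtlety is verifying that the leading differential computation, which is classical for $K(m)$, carries over verbatim to the integral mod-$p^k$ variant $K_{p^k}(m)$; this should follow essentially for free from the identical shape of the $p$-series in Example~\ref{exmp:deformedmorava}, but writing it out carefully is where I would spend the most attention.
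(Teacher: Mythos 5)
Your proposal is correct and follows essentially the same strategy as the paper: reduce via \Cref{cor:deformedmoravash} and Poincar\'e duality to showing that the Atiyah--Hirzebruch spectral sequence for $K_{p^k}(m)$ of $M$ collapses under the stated dimension hypothesis. The paper's proof is more terse (and works with the cohomological AHSS for the pair $(M,\partial_\infty M)$ rather than the homological one for $M$, which is the same thing by Lefschetz duality): it just observes that $|v_m|=2(p^m-1)$ exceeds the cohomological range, which by a pure degree count kills every differential and every extension at once. Your explicit handling of the extension problem is a worthwhile point the paper leaves implicit, and your resolution is the right one. One minor over-elaboration: the vanishing of $d_r$ for $2\le r<2p^m-1$ does not actually require any input from the $p$-series $[p]_R(u)$ or the Wuergler--Yagita computation --- it follows purely from the coefficients $K_{p^k}(m)_*=\bZ/p^k[v_m^{\pm1}]$ being concentrated in degrees divisible by $2(p^m-1)$, which forces $r\equiv 1\pmod{2(p^m-1)}$ for any nonzero $d_r$; the formal-group-law structure would only be needed to identify what $d_{2p^m-1}$ \emph{is}, which the argument never uses.
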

\begin{proof}
We borrow the idea from \cite[Lemma 5.15]{abouzaidmcleansmithglobal1}. If $2(p^m-1)>dim_\bR(M)$ or $4(p^m-1)>dim_\bR(M)$ and $M$ is Weinstein, $|v_m|=2(p^m-1)$ is larger than the range of the cohomology of $M$. Therefore the Atiyah--Hirzebruch spectral sequence degenerates and $K_{p^k}(m)^* (M,\partial_\infty M)\simeq H^*(M,\partial_\infty M,\bZ[v_m,v_m^{-1}]/p^k)$. The theorem now follows from \Cref{cor:deformedmoravash} and Poincar\'e duality. 
\end{proof}
In particular, $\tate{K_{p}(m)}(SH_S(M,\bS))\cong H_{2n-*}(M,\bF_p[v_m,v_m^{-1}])((u))$, whose rank over $\bF_p[v_m,v_m^{-1}]((u))$ gives the total dimension of $H_{2n-*}(M,\bF_p)$. However, one can recover more information: the isomorphism in \Cref{thm:hlgrecovery} is natural in $k$; hence, the inverse limits of $\tate{K_{p^k}(m)}(SH_S(M,\bS))$ and $H_{2n-*}(M,\bZ[v_m,v_m^{-1}]/p^k)((u))$ in $k$
are also isomorphic. The finitely generated abelian group $H_{2n-*}(M,\bZ)$ can be expressed as a direct sum of cyclic groups (in a non-unique way). If one uses the universal coefficients theorem, and takes a limit, 
\begin{equation}
\lim\limits_{k} H_{2n-*}(M,\bZ[v_m,v_m^{-1}]/p^k)((u))    
\end{equation} 
the $\bZ$ summands in $H_{2n-*}(M,\bZ)$ turn into $\bZ_p[v_m,v_m^{-1}]((u))^\wedge$ and $\bZ/p^l$ summands turn into 
\begin{equation}
    \bZ_p[v_m,v_m^{-1}]((u))^\wedge/p^l\oplus \bZ_p[v_m,v_m^{-1}]((u))^\wedge/p^l
\end{equation}
as modules over $\bZ_p[v_m,v_m^{-1}]((u))^\wedge$ (the $p$-adic completion of $\bZ_p[v_m,v_m^{-1}]((u))$). As a result, one can recover the $p$-torsion part within $H_*(M,\bZ)$, for every prime $p$. Combining this with \Cref{cor:eilenbergmaclanetatesh}, we obtain:
\begin{cor}\label{cor:fullhlgrecovery}
The filtered $S^1$-equivariant homotopy type of $SH_S(M,\bS)$ determines $H_*(M,\bZ)$.
\end{cor}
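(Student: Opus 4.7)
The strategy is to apply Theorem \ref{thm:hlgrecovery} for every prime $p$, pass to the inverse limit in $k$, and read off the cyclic summands of $H_*(M,\bZ)$ via the universal coefficients theorem. The key structural input is that $\tate{K_{p^k}(m)}(SH_S(M,\bS))$, together with its natural transitions under $\bZ/p^{k+1}\to\bZ/p^k$, is a functor of the filtered $S^1$-equivariant homotopy type of $SH_S(M,\bS)$, by functoriality of completed Tate cohomology.

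Fixing a prime $p$ and choosing $m$ with $2(p^m-1) > \dim_\bR M$ (or $4(p^m-1) > \dim_\bR M$ in the Weinstein case), Theorem \ref{thm:hlgrecovery} yields a natural isomorphism
\[
\tate{K_{p^k}(m)}(SH_S(M,\bS)) \;\cong\; H_{2n-*}(M, \bZ[v_m, v_m^{-1}]/p^k)((u)).
\]
Inverse limits of graded abelian groups are taken degreewise, so passing to $\lim_k$ on both sides produces an invariant of the filtered $S^1$-equivariant homotopy type, isomorphic to $\lim_k H_{2n-*}(M, \bZ[v_m, v_m^{-1}]/p^k)((u))$.

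Next I would compute this limit summand by summand using universal coefficients. A free $\bZ$ summand of $H_d(M,\bZ)$ contributes a single $\bZ[v_m,v_m^{-1}]/p^k$ in Tate degree $2n-d$ (the $\operatorname{Tor}$ term vanishes). A $\bZ/p^l$ summand in degree $d$ contributes $\bZ[v_m,v_m^{-1}]/p^{\min(k,l)}$ in Tate degree $2n-d$ (from $\otimes$) and an isomorphic copy in Tate degree $2n-d-1$ (from $\operatorname{Tor}$). Torsion at primes $q\neq p$ dies modulo $p^k$. Taking $k\to\infty$, free summands stabilize to copies of $\bZ_p[v_m,v_m^{-1}]((u))^\wedge$, and each $\bZ/p^l$ summand stabilizes to a pair of copies of $\bZ_p[v_m,v_m^{-1}]((u))^\wedge/p^l$, viewed as modules over $\bZ_p[v_m,v_m^{-1}]((u))^\wedge$.

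The cyclic decomposition of $H_*(M,\bZ)$ can then be read off from this module: the free rank in Tate degree $j$ equals the free rank of $H_{2n-j}(M,\bZ)$, while the $p^l$-torsion in Tate degree $j$ equals the sum of the $p^l$-torsions of $H_{2n-j}$ and $H_{2n-j-1}$; solving inductively starting from the top degree (where $H_{2n+1} = 0$) recovers the full $p$-primary torsion of every $H_d(M,\bZ)$. Running the procedure for every prime reconstructs $H_*(M,\bZ)$. The only delicate point I expect is degree bookkeeping: one must rule out collisions between contributions from different homological degrees of $M$ after multiplication by powers of $v_m$, and this is precisely why the assumption $|v_m| = 2(p^m-1) > \dim_\bR M$ is forced upon us. The remaining tensor/$\operatorname{Tor}$ ambiguity within a single Tate degree is handled by the inductive disentangling above, which is elementary linear algebra.
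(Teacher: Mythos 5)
Your overall strategy — apply Theorem \ref{thm:hlgrecovery} for each prime, use naturality in $k$ to pass to the inverse limit, and decompose via universal coefficients — is exactly the paper's, and the universal coefficients computation (a $\bZ$-summand of $H_d$ giving one copy of $\bZ_p[v_m,v_m^{-1}]((u))^\wedge$, a $\bZ/p^l$-summand giving two copies of $\bZ_p[v_m,v_m^{-1}]((u))^\wedge/p^l$ from $\otimes$ and $\operatorname{Tor}$) is also what the paper does. However, your final ``inductive disentangling'' step overclaims and does not work as stated. The issue is that $u$ has degree $-2$ and is invertible in $\bZ_p[v_m,v_m^{-1}]((u))^\wedge$, so this ring — and hence any graded module over it — is $2$-periodic. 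A cyclic summand $\bZ_p[v_m,v_m^{-1}]((u))^\wedge[-j]$ is isomorphic to $\bZ_p[v_m,v_m^{-1}]((u))^\wedge[-j']$ as a graded module whenever $j\equiv j'\pmod 2$, so the module only records the homological degree \emph{mod $2$}. There is no ``top degree'' visible in the module from which to start an induction: your assertion that ``the free rank in Tate degree $j$ equals the free rank of $H_{2n-j}(M,\bZ)$'' conflates a single homological degree with the whole parity class $\bigoplus_{d\equiv j}H_{2n-d}$. Worse, since the $\otimes$ and $\operatorname{Tor}$ contributions from a single $\bZ/p^l$-summand land in \emph{opposite} parities, the torsion counts in even and odd parity are forced to coincide, so one cannot even tell which parity a torsion summand originated from. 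This is precisely why the paper (see the abstract) only claims to recover $H_*(M,\bZ)$ \emph{as an ungraded group}; the paper's proof is careful to describe the answer only as a module, without attempting to reconstruct the individual degrees.

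Separately, you attribute the hypothesis $2(p^m-1)>\dim_\bR M$ to ``ruling out collisions between contributions from different homological degrees of $M$ after multiplication by powers of $v_m$.'' That is not its role: as the proof of Theorem \ref{thm:hlgrecovery} explains, the condition is used so that the Atiyah--Hirzebruch spectral sequence computing $K_{p^k}(m)^*(M,\partial_\infty M)$ degenerates, giving the isomorphism with ordinary cohomology. It does nothing to prevent the degree collisions caused by inverting $u$, which are unavoidable and are the reason only ungraded recovery is possible.
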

Notice that we only need the (filtered, $S^1$-equivariant) generalized cohomology of $SH_S(M,\bS)$ with respect to Morava $K$-theories. Therefore less information than $SH_S(M,\bS)$ is actually required. 

Another implication of \Cref{thm:tatecohforsh} is the following (recall that for an abelian group $A$, $A^\wedge:=\lim\limits_{n\in\bN}A/nA$):
\begin{thm}\label{thm:kutatesh}
After simultaneously completing at every prime $p$,  the groups $\tate{KU}(SH_S(M,\bS))$ and $KU_{2n-*} (M)((u))$ become isomorphic. In other words, $\tate{KU}(SH_S(M,\bS))^\wedge\simeq KU_{2n-*} (M)((u))^\wedge$. As a result, the filtered $S^1$-equivariant homotopy type of $SH_S(M,\bS)$ determines $KU_*(M)$ (and hence $KU^*(M)$ by the universal coefficients theorem). 
\end{thm}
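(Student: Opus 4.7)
The plan is to reduce the statement directly to \Cref{thm:tatecohforsh} combined with the computation of \Cref{exmp:ku}, and then recover $KU_*(M)$ by a decomposition argument analogous to \Cref{cor:fullhlgrecovery}.

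First I would apply \Cref{thm:tatecohforsh} with $R=KU$ to obtain
\begin{equation}
    \tate{KU}(SH_S(M,\bS)) \;\simeq\; \mathfrak{F}_{KU}^{-1} KU_{2n-*}(M)[[u]],
\end{equation}
where the localization at $\mathfrak{F}_{KU}=\{[k]_{KU}(u)\}_{k\geq 1}$ is taken after $u$-adic completion. The next step is to identify this with $KU_{2n-*}(M)((u))$ after profinite completion. This is exactly the content of \Cref{exmp:ku}: modulo $p^k$, each $[mp^l]_{KU}(u)$ equals $u^{p^l}$ times a unit up to multiples of $p$, so inverting $u$ in $KU/p^k$-cohomology automatically inverts the entire family $\mathfrak{F}_{KU}$. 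Applying this coefficientwise to $KU_{2n-*}(M)[[u]]$ (which is permissible because $KU_*(M)$ is finitely generated in each degree for $M$ of finite type, so no exotic completion issues arise) and then taking the inverse limit over $p^k$ and over all primes $p$ simultaneously yields
\begin{equation}
    \tate{KU}(SH_S(M,\bS))^\wedge \;\simeq\; KU_{2n-*}(M)((u))^\wedge,
\end{equation}
as claimed.

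For the second assertion, the remaining task is to extract $KU_*(M)$ from the completed module $KU_{2n-*}(M)((u))^\wedge$. Since $M$ is a stably framed Liouville manifold of finite type, each group $KU_j(M)$ is a finitely generated abelian group, hence decomposes as $\bZ^{a_j} \oplus \bigoplus_i \bZ/p_i^{l_i}$. The functor $(-)((u))^\wedge$ (i.e.\ tensor with $\bZ((u))^\wedge$ up to appropriate completion) preserves this decomposition and sends a $\bZ$-summand to $\bZ((u))^\wedge$ and a $\bZ/p^l$-summand to $\bZ/p^l((u))^\wedge$, viewed as modules over $\bZ((u))^\wedge$. Exactly as in the paragraph preceding \Cref{cor:fullhlgrecovery}, one can then read off the Betti numbers $a_j$ and the torsion orders $p_i^{l_i}$ from this module decomposition. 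This recovers $KU_*(M)$, and $KU^*(M)$ follows by the universal coefficient theorem for $KU$.

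The main obstacle, if any, is the handling of the double inverse limit implicit in ``simultaneous completion at every prime'' together with the $u$-adic completion built into the definition of $\tate{KU}(-)$: one must check that completing first in $u$ and then at primes (the order forced by \Cref{defn:completedtate} followed by $(-)^\wedge$) gives the same answer as the naive formula $KU_{2n-*}(M)((u))^\wedge$. This is where finite generation of $KU_*(M)$ in each degree is essential, since for finitely generated modules the relevant completions commute with the (finite) coefficient quotients $KU/p^k$ used in \Cref{exmp:ku}. Once this bookkeeping is in hand, the theorem follows formally from the cited results.
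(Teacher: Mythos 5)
Your proposal is correct and follows essentially the same route as the paper: apply Theorem~\ref{thm:tatecohforsh} together with Example~\ref{exmp:ku} to get the completed isomorphism, then use finite generation of $KU_*(M)$ to read off the answer from the structure of the completed module. The only stylistic difference is in the extraction step: the paper invokes Bott periodicity to collapse the graded module to degree zero, identifying $KU_*(M)((u))$ with $KU_0(M)((\beta^{-1}u))$ in even degrees and $KU_1(M)((\beta^{-1}u))$ in odd degrees, and then splits each into its torsion and free parts; you instead decompose each $KU_j(M)$ directly into cyclic summands. Both are equivalent bookkeeping, though the paper's use of $\beta^{-1}u$ (a degree-zero element) makes the graded-module structure a bit cleaner to track. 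Your observation that the order of completions ($u$-adic followed by profinite) must be checked to agree with the naive $KU_{2n-*}(M)((u))^\wedge$ is a legitimate point that the paper handles implicitly via the $KU/p^k$ reductions in Example~\ref{exmp:ku}, so your proposal is, if anything, slightly more explicit about that subtlety.
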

This result can be interpreted as saying that the (co)filtered equivariant homotopy type of ``symplectic $K$-homology'' determines the $K$-theory of the underlying manifold. 
\begin{proof}
The isomorphism statement follows from \Cref{thm:tatecohforsh} and \Cref{exmp:ku}. Moreover, as $M$ is of finite type, each $KU_i(M)$ is a finitely generated abelian group. $KU_* (M)((u))$ can be identified with $KU_0(M)((\beta^{-1}u))$ at even degrees and with $KU_1(M)((\beta^{-1}u))$ at odd degrees. Each of these groups split as $(T\oplus F)((\beta^{-1}u))$, where $T$ is the torsion part of $KU_0(M)$, resp. $KU_1(M)$, and $F$ is a finitely generated free abelian group. The completion procedure leaves $T((\beta^{-1}u))$ the same, and $F((\beta^{-1}u))$ turns into a finitely generated free module over $\bZ((\beta^{-1}u))^\wedge$ of the same rank (and the former is still the torsion part). It is clear that $T$ and $F$ are uniquely determined. 
\end{proof}
\begin{rk}\label{rk:treumann}
As we mentioned, \Cref{thm:kutatesh} is closely related to a string theory and mirror symmetry inspired conjecture of Treumann, which states that the Fukaya category determines the complex $K$-theory, see \cite{treumann2019complex}. 
This is clearly false for wrapped Fukaya categories, as there can be subcritical Weinstein manifolds with different complex $K$-theories; however, Treumann conjectures filtered versions of this claim. In other words, the filtered wrapped Fukaya category recovers the complex $K$-theory. \Cref{thm:kutatesh} provides evidence for this conjecture. When the Atiyah--Hirzebruch spectral sequence $H^*(SH_S(M,\bS), KU^*)\Rightarrow KU^*(SH_S(M,\bS))$ degenerates (e.g. when the ordinary symplectic cohomology is supported in even degrees), $KU^*(SH_S(M,\bS))$ is uniquely determined by the symplectic cohomology. Assuming the filtration can be chosen so that the same degeneration claim holds uniformly at every level, $KU^*(SH_S(M,\bS))$ is likely to be determined as a (co)filtered equivariant spectrum by the filtered equivariant symplectic cohomology. It is reasonable to expect that a filtered equivariant version of \cite{sheelthesis,ganatracyclic} holds, i.e. the filtered equivariant symplectic cohomology is determined by the filtered wrapped Fukaya category, and this would prove Treumann's conjecture.	
\end{rk}

\subsection{The strategy for the proof of \Cref{thm:tatecohforsh}}\label{subsection:proof-summary}
Our plan is to exhibit a cofinal sequence of suitably constructed linear Hamiltonians $H: S \times S^1 \times M \to \bR$. For such Hamiltonians, we will show that $HF_S(H,\bS)$ --as an $S^1$-equivariant spectrum-- has the following building blocks:
\begin{itemize}
    \item[(i)] a single copy of $\Sigma^\infty (M/\partial_\infty M)$
    \item[(ii)] Thom spectra of $S^1$-equivariant virtual bundles $\nu \to S^1/C_k$ for various $k$
\end{itemize}
More precisely, $HF_S(H,\bS)$ admits a finite filtration such that the lowest level is equivalent to $\Sigma^\infty (M/\partial_\infty M)$ (with trivial action) and the subsequent subquotients are equivalent to spectra of the form $(S^1/C_k)^\nu$. By \Cref{lem:nontrivaction}, the completed Tate cohomology of the latter vanishes, which implies that $\tate{R}(HF_S(H,\bS)) \simeq \tate{R}(\Sigma^\infty (M/\partial_\infty M))$. From this, we can conclude that $\tate{R}(SH_S(M,\bS)) \simeq \tate{R}(\Sigma^\infty (M/\partial_\infty M))$, as claimed. 
%

Fix a generic large slope $a>0$. One is tempted to use autonomous Hamiltonians $H\to\bR$ of slope $a$ that are small in the interior of $M$. Assuming that equivariant $HF_S(H,\bS)$ is defined, it can be filtered by action, where the constant orbits produce a building block equivalent to $\Sigma^\infty (M/\partial_\infty M)$ and others produce subquotients equivalent to some $(S^1/C_k)^\nu$. Even though our topological framework from \cite{cotekartalequivariantfloerhomotopy} allows the use of such Hamiltonians, to avoid further gluing analysis, we will follow \cite{zhaoperiodic} and use approximately autonomous Hamiltonians. In other words, we perturb autonomous Hamiltonians to non-autonomous ones near the non-constant orbits. The finite filtration we put on the corresponding spectra is a coarse version of the action filtration.

\subsection{Constructing approximately autonomous Hamiltonians}

\subsubsection{Perturbing an autonomous Hamiltonian}\label{subsubsection:perturbing-autonomous}
Fix a slope $a>0$ that is different from the length of any Reeb orbit of $\lambda_\infty$. Following \cite[\S5]{zhaoperiodic}, it will be convenient to fix a particular class of Hamiltonians of slope $a$ whose dynamics are simple to analyze. The following lemma is essentially proved in \cite[\S5]{zhaoperiodic}. 
\begin{lem} \label{lemma:orbits-characterization}
Given any $0<\epsilon_a \ll 1$, there exists a Hamiltonian $\overline{H}^a: M \to \bR$ with the following properties: 
\begin{equation}
	\overline{H}^a(x)=\begin{cases}
		-F(x),& \text{for }x\in int(\overline{M})\\
		(r-1)^2/2	,&	x=(r,y)\in [1+\epsilon_a,a+1-\epsilon_a]\times\partial\overline{M}\\
		 a(r-1)-a^2/2,& x=(r,y)\in [a+1,\infty)\times\partial\overline{M},
	\end{cases}
\end{equation}
where $F:  \overline{M} \to \bR_{\leq 0}$ is Morse,\footnote{Note that here we differ slightly from \cite[\S5]{zhaoperiodic}, whose Hamiltonian is non-positive in $\overline{M}$. We opted for a slightly different construction to simplify the proof of \Cref{lem:nonequivariant-pss}; see \Cref{remark:h-positive-reason}.} bounded of size $\epsilon_a$ (up to a universal constant) in the $C^2$ norm, and its critical points are contained in the interior. We may assume that the time-$1$ orbits of $\overline{H}^a$ all have different actions, and are of the following two types:
\begin{itemize}
    \item[(a)] critical points of $F$, all of which are non-degenerate (see \cite[Prop.\ 6.1.5]{audindamianmorse})
    \item[(b)] of the form $\{\ell+1 \}\times \gamma_0$, where $\ell\in (0,a)$ and $\gamma_0$ is a length $\ell$ Reeb orbit. Such orbits are transversally non-degenerate and come in $S^1$-families. Their action is $p_\ell:= \ell^2/2+\ell$. 
\qed
\end{itemize}
\end{lem}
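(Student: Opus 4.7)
My plan is to build $\overline{H}^a$ radially on the cylindrical end and to extend it smoothly by $-F$ across $\partial\overline{M}$, so that the orbit structure can be read directly from explicit formulas. On the collar $[1,\infty)\times\partial\overline{M}$ I set $\overline{H}^a(r,y) = h(r)$, where $h$ equals the prescribed parabolic profile $(r-1)^2/2$ on $[1+\epsilon_a,a+1-\epsilon_a]$ and the prescribed affine profile on $[a+1,\infty)$, joined by smooth monotone interpolations on $(1,1+\epsilon_a)$ and $(a+1-\epsilon_a,a+1)$. I shrink $\epsilon_a$ once and for all so that the intervals $(0,\epsilon_a)$ and $(a-\epsilon_a,a)$ contain no Reeb periods of $\lambda_\infty$, which is possible since the Reeb spectrum is discrete and avoids $a$. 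I arrange $h$ to be convex with $h'$ strictly increasing from $0$ to $a$. With $h(1)=h'(1)=0$ by construction, I extend across $\partial\overline{M}$ by $\overline{H}^a = -F$ for a Morse function $F$ supported in the interior of $\overline{M}$, with pairwise distinct critical values and $\|F\|_{C^2}$ of size $\epsilon_a$ up to a universal constant.

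\emph{Orbit analysis.} On the collar the Hamiltonian vector field is $X_{\overline{H}^a} = h'(r)\,R$, where $R$ is the Reeb vector field, so time-$1$ orbits over a Reeb orbit $\gamma_0$ of length $\ell$ correspond exactly to the levels $\{r : h'(r) = \ell\}$. By the choices above, such $r$ exist only in the parabolic region and are unique; this produces a single $S^1$-family $\{\ell+1\}\times\gamma_0$ per Reeb orbit with $\ell \in (\epsilon_a,a-\epsilon_a)$, with the $S^1$-action by reparameterization. The region $[a+1,\infty)$ contributes no orbits because $a$ is not a Reeb period, and the two transition regions contribute none because their $h'$ ranges lie in the Reeb-period-free windows. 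In the interior, since $\|F\|_{C^2}$ is small, the classical result cited as Audin--Damian Prop.\ 6.1.5 ensures that the only time-$1$ orbits of $X_{-F}$ are the non-degenerate critical points of $F$. Transverse non-degeneracy of each circle family then follows from non-degeneracy of the Reeb orbits of $\lambda_\infty$.

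\emph{Actions and distinctness.} Substituting into the standard formula $\cA_H(x) = rh'(r) - h(r)$ on the collar yields the action $\ell^2/2 + \ell = p_\ell$ for class (b); for a critical point $x$ of $F$ the action reduces to $\cA_{\overline{H}^a}(x) = F(x)$, since $x$ is a constant loop and so $\int x^*\lambda = 0$. Consequently class (a) actions lie in an interval of radius $\lesssim \epsilon_a$ around $0$ and are automatically separated from $p_\ell \ge p_{\epsilon_a} = \epsilon_a^2/2 + \epsilon_a > \epsilon_a$. A generic choice of $F$ makes the class (a) actions pairwise distinct, while within class (b) the map $\ell\mapsto p_\ell$ is strictly increasing, so distinct Reeb orbits give distinct actions.

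\emph{Main obstacle.} The only delicate point is the construction of $h$ on the two short transition intervals: its derivative must be strictly monotone, must remain in the Reeb-period-free windows $(0,\epsilon_a)$ and $(a-\epsilon_a,a)$, and must match derivatives smoothly with the two adjoining prescribed formulas. This is achievable by an elementary bump-function argument, and is really the only analytic input beyond standard Morse theory and the discreteness of the Reeb spectrum of $\lambda_\infty$.
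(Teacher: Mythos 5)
The paper does not actually supply a proof of this lemma; it simply states that it is ``essentially proved in Zhao [\S5]'' and ends with \verb|\qed|. Your write-up fills in the standard radial construction that Zhao (and Cieliebak--Hofer--Wysocki) use, so you and the paper are in the same framework, with you supplying the details the paper omits.

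Two points in your write-up deserve a correction or a caveat. First, the phrase ``a Morse function $F$ supported in the interior of $\overline{M}$'' is internally inconsistent: a function that vanishes identically on an open collar of $\partial\overline{M}$ has an open set of critical points and cannot be Morse. What the construction actually requires (and what makes $\overline{H}^a$ smooth across $r=1$) is a $C^2$-small Morse function $F$ on $\overline{M}$ whose critical points lie in the interior and which, in a collar of $\partial\overline{M}$, is \emph{radial}, i.e.\ a function of the Liouville coordinate $r$ alone, matching the interpolating profile $h$ with its first two derivatives at $r=1$. In particular you should not insist on $h(1)=h'(1)=0$; you only need $h$ and $h'$ small of size $\epsilon_a$ there, consistent with $\|F\|_{C^2}\lesssim\epsilon_a$. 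This does not change the orbit analysis because the transition interval is designed so that $h'$ stays inside a Reeb-period-free window.

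Second, your distinctness argument for class (b) actions, ``the map $\ell\mapsto p_\ell$ is strictly increasing, so distinct Reeb orbits give distinct actions,'' quietly assumes that distinct closed Reeb orbits have distinct periods. A non-degenerate contact form need not satisfy this; two geometrically distinct Reeb orbits can share the same length, producing two $S^1$-families with the same action $p_\ell$. The lemma's phrasing ``we may assume the time-$1$ orbits all have different actions'' is covering precisely this: either one arranges $\lambda_\infty$ generically so that all closed Reeb orbit lengths below slope $a$ are distinct, or one performs a small non-radial perturbation near one of the colliding families to break the tie. You should state this genericity hypothesis explicitly rather than deriving distinctness from monotonicity of $p_\ell$ alone. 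Relatedly, transverse non-degeneracy of the $S^1$-families uses not only non-degeneracy of the Reeb orbit but also $h''(r)\neq 0$ at the orbit level, which the parabolic profile provides; it is worth recording both ingredients. Finally, your separation of class (a) from class (b) actions uses ``$\lesssim\epsilon_a$'' for the former and ``$>\epsilon_a$'' for the latter with an unstated constant; the cleaner argument is that the class (a) actions are $O(\epsilon_a)$ while the smallest Reeb period $\ell_0>0$ is a fixed constant and $p_{\ell_0}>0$ does not shrink with $\epsilon_a$, so shrinking $\epsilon_a$ once settles it.

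With those adjustments, your argument is a correct and reasonably complete reconstruction of what the paper delegates to Zhao.
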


We now introduce time-dependent perturbations of  $\overline{H}^a$ near each critical manifold of type (b). Let $h_0: S^1\to [0,1]$ be a Morse function with a maximum at $1/2$ and minimum at $0$. For a non-constant $k$-fold orbit $\gamma$ of $\overline{H}^a$, define a function $h_t$ on $Im(\gamma)$ by $h_t(\gamma(t'))=h_0(k(t'-t))$ and extend $h_t$ to a small tubular neighborhood $V_\gamma$ of $Im(\gamma)$. Consider $\overline{H}^a_{\epsilon, \gamma}=\overline{H}^a+\epsilon h_t$, for a small $0< \epsilon \ll \epsilon_a$. It is shown in \cite[Prop.\ 2.2]{cieliebakhoferwysockiapplsymplchlg} that 
\begin{itemize}
	\item the Hamiltonian $\overline{H}^a_{\epsilon, \gamma}$ has two non-degenerate orbits within $V_\gamma$ given by $\gamma^-(t)=\gamma(t)$ and $\gamma^+(t)=\gamma(t+1/(2k))$ 
	\item $\cA_{\overline H^a_\epsilon}(\gamma^-)=\cA_{\overline H^a}(\gamma)$, $\cA_{\overline H^a_\epsilon}(\gamma^+)=\cA_{\overline H^a}(\gamma)-\epsilon$, and $ind(\gamma^-)=ind(\gamma^+)+1$
	\item there are exactly two Floer trajectories from $\gamma^-$ to $\gamma^+$ within $V_\gamma$ 
\end{itemize}
We make this perturbation for every non-constant orbit, and denote the resulting Hamiltonian by $\overline{H}^a_\epsilon: S^1 \times M \to \bR$. More precisely, as non-constant orbits come in circle families, we make a choice of representative $\gamma:S^1\to M$ for each of them. Then we apply the procedure above for each chosen orbit parametrization. For small enough $\epsilon$, the perturbation will not create new $1$-periodic orbits other than $\gamma^\pm$. In other words:
\begin{lem}\label{lemma:perturbed-orbits-characterization}
The time-$1$ orbits of $\overline{H}^a_\epsilon$ are of the following two types:
\begin{itemize}
    \item[(a)] the critical points of $F$
    \item[(b)] a pair of non-stationary orbits $\gamma^+$ and $\gamma^{-}$ for every Reeb orbit $\gamma$. If $\gamma$ has length $\ell$, then $\gamma^\pm$ have action arbitrarily close to $ p_\ell:= \ell^2/2 +\ell$. 
\qed
\end{itemize} 
\end{lem}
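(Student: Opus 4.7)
The plan is to bootstrap from the characterization of time-$1$ orbits of the unperturbed Hamiltonian $\overline{H}^a$ in \Cref{lemma:orbits-characterization}. Since $\overline{H}^a_\epsilon$ differs from $\overline{H}^a$ only on the disjoint tubular neighborhoods $\bigcup_\gamma V_\gamma$, I will split the analysis according to where a hypothetical time-$1$ orbit $x$ of $\overline{H}^a_\epsilon$ sits, and handle three cases separately.

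First, I would treat orbits lying entirely in $M \setminus \bigcup_\gamma V_\gamma$. On this set $\overline{H}^a_\epsilon = \overline{H}^a$, so such $x$ is automatically a time-$1$ orbit of $\overline{H}^a$ contained in the complement. By \Cref{lemma:orbits-characterization}, such orbits are exactly the critical points of $F$, which live in the interior of $\overline{M}$ and are disjoint from the collar region containing the $V_\gamma$; this reproduces the orbits of type (a).

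Second, for orbits entirely contained in a single $V_\gamma$, I would simply quote the local analysis of Cieliebak--Hofer--Wysocki \cite[Prop.\ 2.2]{cieliebakhoferwysockiapplsymplchlg}, already invoked in \Cref{subsubsection:perturbing-autonomous}, which for sufficiently small $\epsilon$ produces exactly the two non-degenerate orbits $\gamma^\pm$. The action estimate $\cA_{\overline H^a_\epsilon}(\gamma^\pm) = \cA_{\overline H^a}(\gamma) + O(\epsilon) = p_\ell + O(\epsilon)$ then follows from the explicit formula already recorded above, yielding orbits of type (b).

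The main obstacle is to rule out orbits that partially enter and exit some $V_\gamma$, and this is where I expect the argument to require the most care. The strategy is a contradiction-by-compactness: suppose that for every $\epsilon_n \to 0$ there exists a time-$1$ orbit $x_n$ of $\overline{H}^a_{\epsilon_n}$ which is neither a critical point of $F$ nor one of the $\gamma^\pm$. Since $\overline{H}^a_{\epsilon_n} \to \overline{H}^a$ in $C^2$, after passing to a subsequence $x_n$ converges (by Arzel\`a--Ascoli) to a time-$1$ orbit $x_\infty$ of $\overline{H}^a$. By \Cref{lemma:orbits-characterization}, $x_\infty$ is either a non-degenerate critical point of $F$ in the interior, or a Reeb orbit $\{\ell+1\}\times\gamma_0$ in a collar level. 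In the first case, the implicit function theorem applied to the non-degenerate orbit $x_\infty$ forces $x_n = x_\infty$ for large $n$, contradicting the assumption. In the second case, $x_n$ must eventually lie in $V_{\gamma_0}$, which returns us to the local situation handled above and forces $x_n \in \{\gamma_0^+, \gamma_0^-\}$, again a contradiction. Hence for all sufficiently small $\epsilon$, the list (a)--(b) is exhaustive. \qed
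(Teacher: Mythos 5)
Your argument is correct and follows the same route the paper takes implicitly: the paper states the lemma with no written proof (it ends in \qed), relying on the preceding paragraph's assertion that ``for small enough $\epsilon$, the perturbation will not create new $1$-periodic orbits other than $\gamma^\pm$'' together with the Cieliebak--Hofer--Wysocki local model; your case split and the Arzel\`a--Ascoli/IFT contradiction argument for ruling out spurious orbits are exactly the standard justification being suppressed.
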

In particular, the Hamiltonian $\overline{H}^a_\epsilon$ is non-degenerate and it has only finitely many orbits.

Choice of a generic almost complex structure $S^1\to M$ allows one to define a flow category $\cM_{\overline{H}^a_\epsilon,J}$ and its geometric realization $HF(\overline H^a_\epsilon,\bS)$. There is a finite, increasing action filtration on $\cM_{\overline H^a_\epsilon,J}$ and $HF(\overline H^a_\epsilon,\bS)$. More precisely, let $F^p\cM_{\overline H^a_\epsilon,J}$ denote the subcategory of $\cM_{\overline H^a_\epsilon,J}$ spanned by orbits of action at most $p$, and let $F^pHF(\overline H^a_\epsilon,\bS)$ denote its geometric realization. For $p\leq p'$, $F^p\cM_{\overline H^a_\epsilon,J}\hookrightarrow F^{p'}\cM_{\overline H^a_\epsilon,J}$ is an inclusion of framed flow categories in the sense of \cite{cotekartalequivariantfloerhomotopy}, and it induces a map $F^pHF(\overline H^a_\epsilon,\bS)\to F^{p'}HF(\overline H^a_\epsilon,\bS)$ of geometric realizations. We denote the homotopy cofiber of this map by $F^{(p,p']}HF(\overline H^a_\epsilon,\bS)$. It coincides with the geometric realization of the quotient flow category by \cite[Cor.\ 2.36]{cotekartalequivariantfloerhomotopy}.\footnote{Consider an appropriate fully faithful inclusion $\iota:\cM_1\hookrightarrow\cM_2$ of flow categories such that any morphism whose domain is in the image of $\iota$ is in the image of $\iota$ (in particular, so is its target). The quotient flow category $\cM_2/\cM_1$ is defined to be the flow category with objects $Ob(\cM_2)\setminus Ob(\cM_1)$ and all the morphisms of $\cM_2$ between them. Roughly, the inclusion and quotient can be thought as analogues of the truncations $\tau_{\geq 0}$ and $\tau_{< 0}$ of chain complexes. See \cite[Def.\ 2.33]{cotekartalequivariantfloerhomotopy}}

If $p$ is the action of $\gamma$, there exists a $\delta>0$ such that $\gamma^\pm$ have action within the window $(p-\delta,p+\delta)$. We assume $\epsilon$ is small enough, so that there exists a $\delta>0$ that works for every orbit and the action windows $(p-\delta,p+\delta)$ corresponding to different orbits do not intersect. We will show the following non-equivariant statements
\begin{enumerate}
	\item $F^0 HF(\overline H^a_\epsilon,\bS)\simeq \Sigma^\infty (M/\partial_\infty M)$ and the map $\Sigma^\infty (M/\partial_\infty M)\to HF(\overline H^a_\epsilon,\bS)$ is compatible with the continuation maps $HF(\overline H^a_\epsilon,\bS)\to HF(\overline H^{a'}_\epsilon,\bS)$
	\item\label{item:localfloer} if $p$ is the action of a non-constant $k$-fold orbit $\gamma$ of $\overline{H}^a$, then the spectrum $F^{(p-\delta,p+\delta]} HF(\overline H^a_\epsilon,\bS)$ 
	is equivalent to $Im(\gamma)^\nu\simeq (S^1/C_k)^\nu$, for a virtual vector bundle $\nu$ on $Im(\gamma)$.
\end{enumerate}
We will also establish equivariant versions of these two statements. For this, we first need to produce an equivariant version of $HF(\overline H^a_\epsilon,\bS)$ that admits a \emph{coarse action filtration} that roughly matches the filtration on $HF(\overline H^a_\epsilon,\bS)$. More precisely, we shall construct approximately autonomous Hamiltonians 
\begin{equation}\label{equation:equivariant-hams}
    H^a_\epsilon:S\times S^1\times M\to\bR
\end{equation} satisfying the required conditions recalled in \S3.4.2 (see \cite{cotekartalequivariantfloerhomotopy} for a more detailed discussion), and a filtration on $HF_S(H^a_\epsilon,\bS)$ such that 
\begin{enumerate}[label=(\theenumi\textquotesingle)]
	\item\label{item:f0eq} $F^0 HF_S(H^a_\epsilon,\bS) \simeq \Sigma^\infty (S\times M/S\times \partial_\infty M)\simeq \Sigma^\infty ( M/ \partial_\infty M)$ as $S^1$-equivariant spectra and the map $\Sigma^\infty  ( M/ \partial_\infty M)\to HF_S(H^a_\epsilon,\bS)$ is compatible with the continuation maps $HF_S(H^a_\epsilon,\bS)\to HF_S(H^{a'}_\epsilon,\bS)$
	\item\label{item:localeq} if $p$ is the action of a non-constant $k$-fold orbit $\gamma$ of $\overline{H}^a$, then the spectrum $F^{(p-\delta,p+\delta]} HF_S(H^a_\epsilon,\bS)$ 
	is $S^1$-equivariantly equivalent to $Im(\gamma)^\nu\simeq (S^1/C_k)^\nu$, for a virtual equivariant vector bundle $\nu$ on $Im(\gamma)$
\end{enumerate}
We will prove \ref{item:f0eq} in \Cref{subsec:equivariantPSS} and \ref{item:localeq} in \Cref{subsec:localfloer}. Assuming these, we can complete the proof of \Cref{thm:tatecohforsh}:
\begin{proof}[Proof of \Cref{thm:tatecohforsh}]
Consider a sequence of real numbers $0<a_1< a_2< \dots$ with $a_i \to \infty$ and each $a_i$ is different from the lengths of non-constant Reeb orbits. Associated to this, we have a sequence $H^{a_k}_\epsilon$ of approximately autonomous Hamiltonians\footnote{The subscript $\epsilon$ is an abuse of notation, as each of $H^{a_k}_\epsilon$ requires a sequence of small numbers, and these sequences can be different from each other; see \Cref{subsubsection:borel-jingyu-data}.} and we choose equivariant continuation data $H^{a_1}_{\epsilon} \rightsquigarrow H^{a_2}_{\epsilon} \rightsquigarrow \dots$ 

We can now use the filtration $HF_S(H^{a_1}_{\epsilon},\bS)\to HF_S(H^{a_2}_{\epsilon},\bS)\to\dots$ of $ SH_S(M,\bS)$ to compute the completed Tate cohomology of $SH_S(M,\bS)$. This filtration is equivalent to 
\begin{equation}
	\Sigma^\infty (M/\partial_\infty M)\to HF_S(H^{a_1}_{\epsilon},\bS)\to HF_S(H^{a_2}_{\epsilon},\bS)\to\dots 
\end{equation}
where the composition $\Sigma^\infty (M/\partial_\infty M)\to HF_S(H^{a_k}_{\epsilon},\bS)$ is equivalent to the PSS map itself by the compatibility statement in \ref{item:f0eq}. Therefore, to conclude the proof, we only need to show that this map induces an equivalence 
\begin{equation}
	\tate{R}(HF_S(H^{a_k}_{\epsilon},\bS))\to \tate{R}(\Sigma^\infty (M/\partial_\infty M))
\end{equation}
for all $k$. We prove $\tate{R}(F^pHF_S(H^{a_k}_{\epsilon},\bS))\to \tate{R}(\Sigma^\infty (M/\partial_\infty M))$ is an equivalence for each $p$, which completes the proof as $F^pHF_S(H^{a_k}_{\epsilon},\bS))=HF_S(H^{a_k}_{\epsilon},\bS))$ for $p\gg 0$. For $p=0$, this follows from \ref{item:f0eq}. As one increases $p$, as long as one does not cross a $\delta$ neighborhood of the action on an orbit of $\overline{H}^{a_k}$, $F^pHF_S(H^{a_k}_{\epsilon})$ does not change. On the other hand, by \ref{item:localeq}, the homotopy fiber of 
\begin{equation}
\tate{R}	(F^{p+\delta}HF_S(H^{a_k}_{\epsilon}))\to \tate{R}	(F^{p-\delta}HF_S(H^{a_k}_{\epsilon}))
\end{equation}
is given by $\tate{R}((S^1/C_m)^\nu)$ for some $m\geq 1$ and for some $S^1$-equivariant bundle $\nu$ on $S^1/C_m$. We have shown $\tate{R}((S^1/C_m)^\nu)\simeq 0$ in \Cref{lem:nontrivaction}. As a result $\tate{R}(F^{p-\delta}HF_S(H^{a_k}_{\epsilon}))$ and $\tate{R}(F^{p+\delta}HF_S(H^{a_k}_{\epsilon}))$ are the same for $p$ in the action spectrum. Therefore, 
\begin{equation}
	\tate{R}(F^{p}HF_S(H^{a_k}_{\epsilon}))\simeq \tate{R}(F^{0}HF_S(H^{a_k}_{\epsilon}))\simeq \tate{R}(\Sigma^\infty (M/\partial_\infty M))
\end{equation}
for $p\gg 0$. This finishes the proof. 
\end{proof}

\subsubsection{Approximately autonomous Hamiltonians on the Borel construction and coarse action filtrations}\label{subsubsection:borel-jingyu-data}
Here we construct the promised Hamiltonians arising in \eqref{equation:equivariant-hams}. Recall that Floer trajectories do not need to decrease the action; however, we will perturb autonomous Hamiltonians in such a way that Floer trajectories are ``almost action decreasing'', guaranteeing us a filtration that approximates the action filtration on orbits in $M$.

Fix a slope $a>0$ and let $\overline H^a:M\to \bR$ be as before. Assume without loss of generality that the actions of non-constant orbits are positive and different from each other. As a result, there exists $\delta>0$ such that $\delta$ is smaller than the action difference between any two non-constant orbits, as well the action difference between a constant and a non-constant orbit. 

To begin with, we let 
\begin{equation}
    H^a: S \times S^1 \times M \to \bR
\end{equation} be the pullback of $\overline{H}^a$ under the map $S \times S^1 \times M \to S^1 \times M$ which forgets the first component.

We would like to input $H^a$ into the Borel construction from \Cref{subsubsection:data-borel}. However, this is not allowed, because $H^a_z= H(z, -, -)$ fails to be non-degenerate over all $z \in crit(\tilde{f})$.

We will therefore deform $H^a$ by a function $h:S\times S^1\times M\to \bR$ supported on a tubular neighborhood of the non-constant orbits. More precisely, for a non-constant $k$-fold orbit $\gamma$ of $\overline H^a$, let $V_\gamma$ denote a tubular neighborhood, and let $b_0,b_1,b_2,\dots \in S$ be a choice of a critical point from each component of $crit (\tilde f)$. Consider a function $h:S\times S^1\times M\to [0,1/2]$ such that

\begin{enumerate}
	\item\label{item:firstitem} $h$ is equivariant with respect to the diagonal action of $S^1$
	\item $h|_{\{b_i\}\times S^1\times Im(\gamma)}: S^1\times Im(\gamma)\to\bR$ is equal to $\epsilon_ih_t$ where $h_t$ is as before and $\epsilon_i>0$
	\item\label{item:thirditem} $h|_{\{b_i\}\times S^1\times V_\gamma}$ is obtained by extending $h|_{\{b_i\}\times S^1\times Im(\gamma)}$ as in \cite{cieliebakhoferwysockiapplsymplchlg} and $h|_{\{b_i\}\times S^1\times M}$ is supported in the union of $\{b_i\}\times S^1\times V_\gamma$
	\item\label{item:locconstant} $h$ is constant in the horizontal directions near the $S^1$-orbit of $b_i$ (see \Cref{subsubsection:data-borel})
\end{enumerate}
The conditions so far ensure that the restriction of $H^a+h$ to $\{b_i\}\times S^1\times M$ is a perturbed Hamiltonian of the form considered in \Cref{subsubsection:perturbing-autonomous}; in particular, it has a constant orbit corresponding to each critical point of $\overline H^a$ and two non-constant orbits corresponding to every non-constant orbit of $\overline H^a$. We also assume
\begin{enumerate}[resume]
	\item the numbers $\epsilon_i$ are sufficiently small so that, for any $i,j$ and pair of (possibly constant) orbits $\gamma,\gamma'$ of $\overline{H}^a$ of different action, the action gap between an orbit of $(H^a+h)|_{\{b_i\}\times S^1\times M}$ obtained by perturbing $\gamma$ and between an orbit of $(H^a+h)|_{\{b_j\}\times S^1\times M}$ obtained by perturbing $\gamma'$ is still strictly more than $\delta$
\end{enumerate}
In other words, the generators of $\cM_{\tilde{f}, H^a+h,J}$ obtained by perturbing different non-constant orbits of $\overline{H}^a$ still have an action gap of $\delta$. Moreover, the action of every such orbit is more than $\delta$. We can filter the objects of $\cM_{\tilde{f}, H^a+h,J}$ and to ensure the morphisms decrease the action (up to a small error), we impose
\begin{enumerate}[resume]
    \item\label{item:integralbound} along any gradient trajectory of $\tilde f$, the integral $\int_{\bR}\sup_{S^1\times M} |dh(\nabla_{\tilde{g}}\tilde f)|ds $ is strictly less than $\delta/2$
\end{enumerate}
Note that $\sup_{S^1\times M} |dh(\nabla_{\tilde{g}}\tilde f)|$ is compactly supported because of \eqref{item:locconstant}. More precisely, $h$ is constant along horizontal directions and $\nabla_{\tilde{g}}\tilde f$ is horizontal. 
\begin{lem}
Functions $h:S\times S^1\times M\to [0,1/2]$ satisfying \eqref{item:firstitem}-\eqref{item:integralbound} exist.
\end{lem}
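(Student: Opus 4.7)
\emph{Proof sketch.} The plan is to construct $h$ as a sum $h=\sum_i \rho_i\,\widetilde h_i$, where each $\widetilde h_i$ is a horizontal lift of a model perturbation near the critical circle $S^1\cdot b_i$ and each $\rho_i$ is an $S^1$-invariant cutoff function on $S$. This splits the construction into a slice-wise problem (handled by the recipe of \cite{cieliebakhoferwysockiapplsymplchlg}) and a horizontal interpolation problem (handled by partition of unity on $S$).

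First, pick pairwise disjoint $S^1$-invariant open neighborhoods $U_i\subset S$ of $S^1\cdot b_i$, each contained in the canonically trivialized neighborhood on which the flat connection is defined. Choose a sequence $\epsilon_i>0$ to be constrained below. On $\{b_i\}\times S^1\times M$, define a function which equals $\epsilon_i h_t$ on each $\{b_i\}\times S^1\times Im(\gamma)$, is extended to $\{b_i\}\times S^1\times V_\gamma$ by the recipe of \cite{cieliebakhoferwysockiapplsymplchlg}, and is zero elsewhere. Using the flat connection and the local trivialization, horizontally extend this model to an $S^1$-equivariant function $\widetilde h_i: U_i\times S^1\times M\to[0,\epsilon_i]$; by construction $\widetilde h_i$ is constant in horizontal directions throughout $U_i$. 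Let $\rho_i:S\to[0,1]$ be an $S^1$-invariant smooth bump equal to $1$ on a smaller neighborhood of $S^1\cdot b_i$ and supported in $U_i$, and set $h=\sum_i \rho_i\widetilde h_i$, extended by zero where needed. Conditions \eqref{item:firstitem}--\eqref{item:locconstant} are then automatic from the construction.

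For \eqref{item:firstitem}--(5), the time-$1$ orbits of $H^a+h$ restricted to $\{b_i\}\times S^1\times M$ deviate in action from the corresponding orbits of $\overline H^a$ by $O(\epsilon_i)$ (cf.\ \cite{cieliebakhoferwysockiapplsymplchlg}); requiring $\epsilon_i<\delta/3$ for all $i$ preserves the action gaps of size at least $\delta$. For \eqref{item:integralbound}, since $\widetilde h_i$ is horizontally constant on $U_i$ and $\nabla_{\tilde g}\tilde f$ is horizontal,
\begin{equation}
dh(\nabla_{\tilde g}\tilde f)=\sum_i d\rho_i(\nabla_{\tilde g}\tilde f)\cdot \widetilde h_i.
\end{equation}
Any gradient trajectory $z:\mathbb{R}\to S$ of $\tilde f$ joins two critical circles and, by the Morse--Smale structure, meets only finitely many $U_i$; thus
\begin{equation}
\int_{\mathbb{R}}\sup_{S^1\times M}\bigl|dh(\nabla_{\tilde g}\tilde f)(z(s),\cdot,\cdot)\bigr|\,ds\;\le\;\sum_i \epsilon_i\int_{\mathbb{R}}\bigl|d\rho_i(\nabla_{\tilde g}\tilde f)(z(s))\bigr|\,ds\;\le\;\sum_i \epsilon_i,
\end{equation}
where the last bound uses that the total variation of $\rho_i\circ z$ on $\mathbb{R}$ is at most $1$. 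Choosing $\epsilon_i$ summable with $\sum_i\epsilon_i<\delta/2$ (e.g.\ $\epsilon_i=\delta/(3\cdot 2^{i+2})$) gives \eqref{item:integralbound}.

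The main obstacle is the tension between \eqref{item:locconstant} and \eqref{item:integralbound}: the former forces $dh(\nabla_{\tilde g}\tilde f)$ to be supported precisely in the transition region where the cutoffs $\rho_i$ vary, so we must simultaneously control the $S^1\times M$-sup of $\widetilde h_i$ and the integral of $|d\rho_i(\nabla_{\tilde g}\tilde f)|$ along arbitrary gradient trajectories. The horizontal lift preserves the slice bound, giving $|\widetilde h_i|\le \epsilon_i$ uniformly on $U_i$, and the integral of $|d\rho_i(\nabla_{\tilde g}\tilde f)|$ along a trajectory is bounded by the total variation of $\rho_i$ along it, hence by $1$. Together with summability of the $\epsilon_i$, this closes the argument.
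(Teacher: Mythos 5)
Your construction follows essentially the same route as the paper: define the model perturbation over each critical circle $S^1\cdot b_i$, extend it horizontally to a flat-connection neighborhood $U_i$ so that the extension is horizontally constant, patch with $S^1$-invariant cutoff functions $\rho_i$, and bound the integral in condition \eqref{item:integralbound} by noting that $dh(\nabla_{\tilde g}\tilde f)=\sum_i d\rho_i(\nabla_{\tilde g}\tilde f)\,\widetilde h_i$ (since $d\widetilde h_i$ kills the horizontal $\nabla_{\tilde g}\tilde f$) and then using the sup bound $|\widetilde h_i|\le\epsilon_i$ together with summability of the $\epsilon_i$.

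The one inaccuracy is quantitative: you assert that the total variation of $\rho_i\circ z$ on $\mathbb{R}$ is at most $1$, but a gradient trajectory that both enters and exits $U_i$ gives $\rho_i\circ z$ going $0\to 1\to 0$, for total variation $2$. (If the trajectory starts or ends at $X_i$, the variation is $1$; but the generic case for $i$ strictly between the indices of the endpoints is $2$.) Correspondingly the paper's estimate uses $2\epsilon_i$ per crossing and imposes $2\sum_i\epsilon_i<\delta/2$. This does not affect the conclusion — just shrink the $\epsilon_i$ by a constant — but the claim ``at most $1$'' as written is false. One should also note that, as with the paper's proof, you are implicitly using that each gradient trajectory crosses each $U_i$ a uniformly bounded number of times; this holds for generic metrics and small enough $U_i$ because $\tilde f$ is strictly decreasing along trajectories and $\tilde f(U_i)$ lies in a small interval, but it is worth flagging.
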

\begin{proof}
One can construct such an $h$ over each $S^{2i+1}\subset S$, and can guarantee \eqref{item:integralbound} by rescaling it by a small constant. The main difficulty is to guarantee it over the entire $S$. 

Let $U_i\subset S$ denote a small $S^1$-equivariant neighborhood of the orbit of $b_i$, on which the connection is flat, and let $S_i$ denote a horizontal slice along $b_i$ such that $U_i$ is the union of orbits of elements of $S_i$. Construct $h$ over $b_i$, extend $S^1$-equivariantly to its orbit and extend horizontally to $U_i$. This is the same as constructing a function on $\{b_i\}\times S^1\times M\to \bR$, and extending to $h_{pre}:S_i\times S^1\times M\to \bR$ constantly in $S_i$ direction, and then extending $S^1$-equivariantly to $h_{pre}:U_i\times S^1\times M\to \bR$. Observe that $dh_{pre}(\nabla_{\tilde{g}}\tilde f)=0$. Indeed, $\nabla_{\tilde{g}}\tilde f$ is tangent to any horizontal slice $zS_i\subset U_i$ ($z\in S^1)$, and $h_{pre}$ is constant in $zS_i$ direction by equivariance. 

We obtain a function over $\bigcup U_i\times S^1\times M$ satisfying \eqref{item:firstitem}-\eqref{item:locconstant}. Using an equivariant cutoff function $\rho:S\to\bR$ that has arbitrarily large support in $\bigcup U_i$, extend $h_{pre}$ to all of $S$ (i.e. $h=\rho h_{pre}$). We only have to check \eqref{item:integralbound} holds. 
The function $\sup_{S^1\times M} |dh(\nabla_{\tilde{g}}\tilde f)|$ is supported near the boundary of $U_i$, as $h$ and $h_{pre}$ agree over the set $\{\rho=1\}\subset U_i$ and $dh_{pre}(\nabla_{\tilde{g}}\tilde f)=0$. Moreover, 
\begin{equation}
   dh(\nabla_{\tilde{g}}\tilde f)=\rho dh_{pre}(\nabla_{\tilde{g}}\tilde f)+h_{pre} d\rho(\nabla_{\tilde{g}}\tilde f) =h_{pre} d\rho(\nabla_{\tilde{g}}\tilde f) 
\end{equation}
Therefore, $\sup_{S^1\times M} |dh(\nabla_{\tilde{g}}\tilde f)|= \sup_{S^1\times M} |h_{pre} d\rho(\nabla_{\tilde{g}}\tilde f)|\leq \epsilon_i |d\rho(\nabla_{\tilde{g}}\tilde f)|$ over the points of $U_i$ (the right hand side does not depend on $S^1\times M$ component). The integral of $|d\rho(\nabla_{\tilde{g}}\tilde f)|$ along a partial negative gradient trajectory of $\tilde f$ crossing $U_i$ is bounded above by $2$ (one can split this integral into two intervals, where $d\rho(\nabla_{\tilde{g}}\tilde f)$ is positive and negative and apply fundamental theorem of calculus. As a result, the integral of $\sup_{S^1\times M} |dh(\nabla_{\tilde{g}}\tilde f)|$ along a partial negative gradient trajectory of $\tilde f$ crossing the support near $\partial U_i$ is bounded above by $2\epsilon_i$. \eqref{item:integralbound} holds under the assumption that $2\sum\epsilon_i<\delta/2$. See \cite[Lem.\ 5.4]{zhaoperiodic} for a similar computation.
\end{proof} 
We denote $H^a+h$ also by $H^a_\epsilon$ to stress its dependency on $\{\epsilon_i\}$. The purpose of conditions \eqref{item:firstitem}-\eqref{item:integralbound} is to guarantee: 
\begin{lem}
The morphisms of $\cM_{\tilde{f}, H^a_\epsilon,J}$ between objects obtained by perturbing different orbits of $\overline{H}^a$ decrease the action. More generally, a morphism can increase the action by at most $\delta/2$.
\end{lem}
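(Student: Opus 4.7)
The plan is to derive both statements from the energy identity for the parameterized Floer equation \eqref{equation:borel-equation}, using the construction of $h$ to bound the resulting $s$-dependent error term by $\delta/2$. Let $(\gamma, u)$ be a morphism in $\cM_{\tilde f, H^a_\epsilon, J}$ with asymptotes $(a_\pm, x_\pm)$. First, I would differentiate the function $s \mapsto \cA_{(H^a_\epsilon)_{\gamma(s)}}(u(s,\cdot))$; the $u$-variation contributes a nonnegative term $|\partial_s u|_{L^2}^2$ via the Floer equation, while the $\gamma$-variation, combined with $\dot\gamma = -\nabla_{\tilde g}\tilde f(\gamma)$, contributes an error involving $d_S H^a_\epsilon$ paired with $\nabla_{\tilde g}\tilde f$, where $d_S$ denotes differentiation in the $S$-direction. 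Integrating from $-\infty$ to $+\infty$ and using that $H^a$ is pulled back from $S^1 \times M$ (so $d_S H^a \equiv 0$ and only $h$ contributes), I expect to obtain
\begin{equation}
    \cA_{(H^a_\epsilon)_{a_+}}(x_+) - \cA_{(H^a_\epsilon)_{a_-}}(x_-) = E(u) + \int_{\bR}\!\int_{S^1} d_S h\big(\gamma(s),t,u(s,t)\big)\!\left(\nabla_{\tilde g}\tilde f(\gamma(s))\right) dt\, ds,
\end{equation}
where $E(u):=\int_{\bR\times S^1}|\partial_s u|^2\, dt\, ds \geq 0$.

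Next, I would bound the error term. Because $\tilde f$ descends to $\mathbb{CP}^\infty$ and the metric on $S$ is lifted through the chosen connection, $\nabla_{\tilde g}\tilde f$ lies in the horizontal distribution; condition \eqref{item:locconstant} then forces $dh(\nabla_{\tilde g}\tilde f)$ to vanish near the critical orbits of $\tilde f$, which are precisely the asymptotic limits of $\gamma$. So the integrand in $s$ has compact support in $\bR$, and a pointwise sup bound combined with condition \eqref{item:integralbound} yields
\begin{equation}
    \left|\int_{\bR}\!\int_{S^1} d_S h(\gamma(s),t,u(s,t))(\nabla_{\tilde g}\tilde f(\gamma(s)))\, dt\, ds\right| \;\leq\; \int_{\bR} \sup_{S^1\times M}\!|dh(\nabla_{\tilde g}\tilde f)|(\gamma(s))\, ds \;<\; \delta/2.
\end{equation}
Dropping the nonnegative $E(u)$ then gives $\cA(x_-) \leq \cA(x_+) + \delta/2$, which is the general (``at most $\delta/2$'') half of the lemma.

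For the first half, assume further that $x_\pm$ are perturbations of \emph{distinct} orbits of $\overline H^a$. By the action-gap condition imposed on the sizes $\epsilon_i$ in the item preceding \eqref{item:integralbound}, the actions of any two such perturbed orbits differ by strictly more than $\delta$; combined with the bound $\cA(x_-) - \cA(x_+) \leq \delta/2 < \delta$ just established, this forces $\cA(x_-) < \cA(x_+)$, i.e.\ strict decrease. The main subtlety I anticipate is matching the paper's sign conventions in the energy identity (the Floer equation in \eqref{equation:borel-equation} is positive-gradient flow of $\cA_{H_{\gamma(s)}}$, so the $|\partial_s u|^2$ term enters with a $+$ sign and the estimate has the sign claimed above); once this is set up, the rest is a routine calculation and the required vanishing of $dh(\nabla_{\tilde g}\tilde f)$ near critical points, which is built into the construction of $h$.
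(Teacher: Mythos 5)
Your proposal is correct and follows essentially the same route as the paper: both use the energy identity for the parameterized continuation equation, observe that only $h$ (not the pulled-back $H^a$) contributes to the $s$-derivative of the Hamiltonian so that $\partial_s H_s = -dh(\nabla_{\tilde g}\tilde f)$, bound the error term by $\delta/2$ via condition \eqref{item:integralbound} (with compact support from \eqref{item:locconstant}), and deduce the first claim from the second via the action-gap condition. The only cosmetic difference is that you phrase the energy estimate via differentiating $s \mapsto \cA_{H_s}(u(s,\cdot))$ rather than quoting the integrated energy formula directly, which amounts to the same computation.
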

\begin{proof}
The first claim follows from the second: two such objects have an action gap of at least $\delta$. Therefore, a morphism between them cannot increase the action. 

To prove the second claim, consider a morphism $(\eta,u)$ of $\cM_{\tilde{f}, H^a_\epsilon,J}=\cM_{\tilde{f}, H^a+h,J}$. Here $\eta$ is a negative gradient trajectory of $\tilde f$ and $u$ satisfies 
\begin{equation}
	\partial_s u + J_{\eta(s)}(\partial_t u - X_{H^a+h_{\eta(s)}})=0.
\end{equation}
Let $H_s=H^a+h_{\eta(s)}$. The energy of such a cylinder is given by 
\begin{equation}\label{eq:actionintegralformula}
	E(u)=\cA(x_{inp})-\cA(x_{out})+\int_{\bR \times S^1}\partial_s H_s (u(s,t))dsdt.
\end{equation} 

Observe that $\partial_s H_s (u(s,t))=-dh(\nabla_{\tilde{g}}\tilde f)$ (because $H^a$ is independent of $s$). 
Therefore, 
\begin{equation}
\bigg|	\int_{\bR \times S^1}\partial_s H_s (u(s,t))dsdt\bigg|\leq 	\int_{\bR \times S^1}|dh(\nabla_{\tilde{g}}\tilde f)|dsdt\leq \int_{\bR}\sup_{S^1\times M}|dh(\nabla_{\tilde{g}}\tilde f)|ds\leq \delta/2.
\end{equation}

\end{proof}
As a result, given $p'$ that is at least $\delta$ away from any action of any orbit of $\overline{H}^a$, the span of objects of action at most $p'$ defines a flow subcategory $F^{p'}\cM_{\tilde{f}, H^a_\epsilon,J}$ (more precisely, there is an inclusion $F^{p'}\cM_{\tilde{f}, H^a_\epsilon,J} \hookrightarrow\cM_{\tilde{f}, H^a_\epsilon,J}$ in the sense of \cite{cotekartalequivariantfloerhomotopy}). This may not be the case for $p'$ closer to the action spectrum of $\overline H^a$. Clearly, varying $p'$ continuously without getting closer to the action spectrum than $\delta$ does not change the subcategory or its realization. Therefore, we will be specifically interested in $F^{p\pm\delta}\cM_{\tilde{f}, H^a_\epsilon,J}$, for $p$ in the action spectrum of $\overline{H}^a$, their geometric realizations, and the homotopy cofiber
\begin{equation}
F^{(p-\delta,p+\delta]}HF_S(H^a_\epsilon):=cofib(|F^{p-\delta}\cM_{\tilde{f}, H^a_\epsilon,J}|\to |F^{p+\delta}\cM_{\tilde{f}, H^a_\epsilon,J}|).
\end{equation}

\section{An equivariant PSS map}\label{subsec:equivariantPSS}
The goal of this section is to prove \Cref{item:f0eq} from \Cref{subsubsection:perturbing-autonomous}. This follows from the following:

\begin{prop}\label{proposition:equivariant-pss}  
    There is an \emph{$S^1$-equivariant PSS map} 
	\begin{equation}
     \Sigma^\infty (M/\partial_\infty M) \simeq \Sigma^\infty (S\times M/S\times \partial_\infty M) \to HF_S(H; \bS)
	\end{equation} 
satisfying the following properties:
	\begin{itemize}
		\item Given $H \rightsquigarrow H'$, we have a homotopy-commutative diagram
\begin{equation}
	\begin{tikzcd}
		\Sigma^\infty (M/\partial_\infty M) \ar[r] \ar[dr] & HF_S(H; \bS) \ar[d] \\
		& HF_S(H'; \bS)
	\end{tikzcd}
\end{equation}
		\item when $H=H^a_\epsilon$, it factors through an equivalence $ \Sigma^\infty (M/\partial_\infty M)\xrightarrow{\simeq} F^0HF_S(H^a_\epsilon; \bS)$.		
	\end{itemize}
\end{prop}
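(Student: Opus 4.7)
The plan is to construct the equivariant PSS map by realizing a parameterized moduli as a framed flow bimodule in the sense of \cite{cotekartalequivariantfloerhomotopy}, and then to analyze its restriction to the $F^0$-sector directly. Fix an auxiliary Morse function $g:\overline{M}\to \bR$ with critical points in the interior, and a cutoff $\beta:\bR\to [0,1]$ with $\beta(s)=0$ for $s\leq 0$ and $\beta(s)=1$ for $s\geq 1$. Consider the moduli $\cP(H)$ of triples $(\gamma,\eta,u)$, where $\gamma$ is a negative gradient trajectory of $\tilde{f}$ on $S$ asymptotic to $a_\pm$, $\eta$ is a negative gradient half-trajectory of $g$ asymptotic to some $m_-\in \mathrm{crit}(g)$, and $u:\bR\times S^1\to M$ satisfies
\begin{equation}
\partial_s u + J_{\gamma(s)}\bigl(\partial_t u - \beta(s)\,X_{H_{\gamma(s)}}\bigr)=0,
\end{equation}
with $u(s,\cdot)\to \eta(0)\in \overline{M}$ as $s\to -\infty$ and $u(s,\cdot)\to x$ as $s\to +\infty$ for some time-$1$ orbit $x$ of $H_{a_+}$. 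A standard maximum principle argument, using that $H$ is linear at infinity, ensures pre-compactness modulo $m_-$ escaping to infinity, which produces the relative factor $\Sigma^\infty (S\times M / S\times \partial_\infty M)$.

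These moduli assemble into an $S^1$-equivariant framed flow bimodule from the product Morse flow category of $(\tilde{f},g)$ on $S\times \overline{M}$ to $\cM_{\tilde{f},H}$, whose bimodule realization is the desired map
\begin{equation}
\Sigma^\infty(S\times M / S\times \partial_\infty M)\simeq \Sigma^\infty(M/\partial_\infty M)\longrightarrow HF_S(H;\bS),
\end{equation}
where the first equivalence uses that $S$ is free and $S^1$-contractible. Compatibility with a continuation $H\rightsquigarrow H'$ (first bullet) is established by constructing a $1$-parameter family of such bimodules interpolating between the PSS bimodule for $H$ composed with the continuation bimodule $\cM_{\tilde{f},H}\to \cM_{\tilde{f},H'}$ and the PSS bimodule for $H'$; the geometric realization of the resulting multi-module supplies the required homotopy.

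For the third bullet, specialize to $H=H^a_\epsilon$. By construction the perturbation $h$ is supported away from the constant orbits of $\overline{H}^a$, whose actions are bounded by $|F(x_c)|<\epsilon_a$, and by the $\delta$-gap set up in the preceding construction the objects of $F^0\cM_{\tilde{f},H^a_\epsilon,J}$ are exactly the pairs $(b_i,x_c)$ with $b_i\in \mathrm{crit}(\tilde{f})$ and $x_c\in \mathrm{crit}(F)$. Choosing $J$ to split in a neighborhood of these constants (so that the parameterized Floer equation decouples into the $S$-gradient flow and the $M$-Floer flow for $-F$), and invoking the standard Floer--Hofer--Salamon comparison for $C^2$-small autonomous Hamiltonians, one identifies $F^0\cM_{\tilde{f},H^a_\epsilon,J}$, as a framed equivariant flow category, with the product Morse flow category $\cM_{\tilde{f}}\times \cM_F$. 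Its realization is the smash product $\Sigma^\infty S_+\wedge \Sigma^\infty(M/\partial_\infty M)\simeq \Sigma^\infty(M/\partial_\infty M)$, and the analogous local analysis of the PSS bimodule in this low-energy regime (where the $\beta$-cutoff becomes inessential) identifies its restriction to $F^0$ with the tautological Morse comparison bimodule. The claimed factorization through an equivalence follows.

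The principal technical obstacle is the matching in the third paragraph: showing that $F^0\cM_{\tilde{f},H^a_\epsilon,J}$ and the restricted PSS bimodule agree with the product Morse model as framed equivariant flow categories, not merely as abstract spectra. This requires three ingredients: (i) an energy estimate excluding parameterized Floer trajectories in the $F^0$-sector that leave the autonomous regime, which follows from the integral bound \eqref{item:integralbound} combined with the $\delta$-gap between constant and non-constant orbit actions; (ii) a transversality argument compatible with the split almost-complex structure near the constants, arranged by perturbing $g$ in the $M$-directions away from the autonomous region; and (iii) verification that the stable framings on the Floer and PSS moduli match the product of the corresponding Morse framings on $S\times \overline{M}$, which reduces to the classical identification of tangent spaces of Floer moduli with those of Morse moduli at constant orbits, carried out equivariantly in the parameter $\gamma$ on $S$.
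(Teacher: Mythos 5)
Your overall architecture — realize the equivariant PSS map as a framed flow bimodule built from parameterized ``spiked'' moduli over the product of the Morse--Bott category on $S$ and a Morse category on $M$, and then argue compatibility with continuations by an interpolating family of bimodules — is the same as the paper's, and the first two bullets are handled essentially identically (the cylinder-with-cutoff domain you use and the paper's disc $D = \mathbb{CP}^1 \setminus \{0\}$ with a smoothed projection function are standard equivalent formulations of the PSS domain, and both the domain category $\cM_{\tilde f}\times\cM_g$ and the paper's $\cM_{\tilde f, f}$ with $f = -\overline{H}^a$ proper and bounded above give $\Sigma^\infty(M/\partial_\infty M)$).

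The genuine gap is in the third bullet. You propose to identify $F^0\cM_{\tilde f, H^a_\epsilon, J}$ \emph{as a framed equivariant flow category} with the product Morse flow category $\cM_{\tilde f}\times\cM_F$, invoking ``the standard Floer--Hofer--Salamon comparison for $C^2$-small autonomous Hamiltonians,'' and then to identify the restricted PSS bimodule with a tautological Morse comparison bimodule. That comparison is standard at the level of chain complexes, but is very far from standard in the framed flow category setting used here: one would have to match, for every pair of constant orbits, the $\langle k\rangle$-manifold structures, the stable framings, and the corner strata of the Floer moduli with those of the Morse moduli, and do so $S^1$-equivariantly in the $S$-parameter. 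There is no citable reference for this in the framed flow category framework, and nothing in your items (i)--(iii) actually delivers it; item (ii) in particular proposes achieving transversality by perturbing $g$, which does not control transversality of the Floer equation at all. The paper avoids this entirely by a much more elementary argument: it observes that the bimodule respects the secondary filtration by the Morse value $\tilde f + f$ on objects (critical points on the domain side, constant orbits on the Floer side), that there is a canonical identification of objects of $\cM_{\tilde f, f}$ and $F^0\cM_{\tilde f, H^a_\epsilon, J}$, and that the \emph{only} bimodule trajectory preserving the filtration level is the constant spiked disc, which is rigid and transverse for free. Hence the bimodule map is upper-triangular with identities on the diagonal with respect to this filtration, and so an equivalence on geometric realizations, without ever proving a Floer-to-Morse equivalence of flow categories. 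You should replace your third paragraph with this filtration-by-$\tilde f + f$ subquotient argument; as written, the claimed identification of framed flow categories is an unjustified and technically heavy assertion that the rest of your argument leans on.
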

(Here $HF_S(H; \bS)$ is defined as in \Cref{subsubsection:data-borel}, and depends implicitly on a choice of almost-complex structure. The notation $H \rightsquigarrow H'$ just indicates a homotopy of Floer data.)

As a warm-up for the equivariant case, we prove the following non-equivariant version first
\begin{lem}\label{lem:nonequivariant-pss}
	There is a \emph{PSS map} 
	\begin{equation}
		\Sigma^\infty (M/\partial_\infty M)  \to HF(H; \bS)
	\end{equation}
	satisfying the following properties:
	\begin{itemize}
		\item Given $H \rightsquigarrow H'$, we have a homotopy-commutative diagram
		\begin{equation}\label{equation:non-equivariant-pss-commute}
			\begin{tikzcd}
				\Sigma^\infty (M/\partial_\infty M)
				\ar[r] \ar[dr] & HF(H; \bS) \ar[d] \\
				& HF(H'; \bS)
			\end{tikzcd}
		\end{equation}
		\item when $H=\overline H^a_\epsilon$, it factors through an equivalence $\Sigma^\infty (M/\partial_\infty M) \xrightarrow{\simeq} F^0HF(\overline H^a_\epsilon; \bS)$ 
	\end{itemize}
\end{lem}
(Here $HF(H; \bS)$ is defined as in \Cref{subsection:spectral-sh}, and depends implicitly on a choice of almost-complex structure.)

To construct the non-equivariant map, fix a function $f$ that is Morse, proper, with critical points only in the interior, and bounded above (also fix a Morse--Smale metric for $f$). The same argument as in \cite[Thm.\ 3.11]{cotekartalequivariantfloerhomotopy} shows that $|\cM_{-f}|\simeq \Sigma^\infty M_+$, as $-f$ is bounded below; \emph{mutatis mutandis} the argument also shows that $|\cM_{f}|\simeq \Sigma^\infty (M/\partial_\infty M)$. (Later on, it will be convenient to choose $f=-\overline{H}^a$, which matches $F$ in the interior, but this is not necessary yet.)

Let $(H, J)$ be Floer data on $M$ satisfying the usual properties reviewed in \Cref{subsection:spectral-sh}.   That is, $H$ is a map from $S^1$ to the space of linear at infinity non-degenerate Hamiltonians on $M$, and $J$ is a map from $S^1$ to the space $\mathcal{J}(M)$ of cylindrical compatible almost-complex structures on $M$. Let $D = \mathbb{CP}^1\setminus \{0\}$ and fix coordinates
\begin{equation}\label{equation:ends-disk}
\mathbb{R} \times S^1 \simeq D- \{\infty\}, (s, t) \mapsto e^{s+it}
\end{equation}
We now introduce Floer data on $M$ parametrized by $D$, which we shall temporarily denote by $(H_{ext}, J_{ext})$. Here $H_{ext}$ is a $1$-form on $D$ valued in the space of smooth functions on $M$ (which are linear at infinity), while $J_{ext}$ is a map from $D$ to $\mathcal{J}(M)$. For $s \gg 1$, we require that $(H_{ext}, J_{ext})_{s,t}= (H_t dt, J_t)$. Otherwise, the only condition which we impose on $(H_{ext}, J_{ext})$ is that $J_{ext}$ is sufficiently generic to ensure transversality of the moduli spaces and evaluation maps which we will shortly introduce.  (Later on, it will be useful to impose additional conditions on $H$, but these are not needed yet.)

By abuse of notation, we now drop the subscripts ``${ext}$'' and instead just denote the Floer data parametrized by $D$ by $(H, J)$.  (To avoid notational misunderstandings: we started with Floer data $(H, J)$, then we used this data to define a \emph{family} of Floer data $(H_{ext}, J_{ext})$. But now, we will stop writing the subscript ``${ext}$'' and just write $(H, J)$. In other words, the new $(H,J)$ is an extension of the old one.) 

We now define a bimodule $\cN^{PSS}$ over $\cM_{f}$-$\cM_{H,J}$ by associating to $(x,y)\in ob(\cM_f) \times ob(\cM_{H, J})$ the usual compactified moduli space of ``spiked discs''. More precisely, this is the compactified moduli space of pairs $(\theta, u)$ such that:
\begin{itemize}
\item $\theta: (-\infty, 0] \to M$ is a negative gradient (half-)trajectory for the above Morse data
\item $u: D \to M$ satisfies the Cauchy–Riemann equation $$(du - X_H)^{0,1}=0$$ with respect to the chosen $D$-parametrized Floer data above. 
\item $\theta$ is asymptotic to $x$, $u$ is asymptotic to $y$ and $\theta(0)=u(\infty)$.
\end{itemize}



The construction of smooth charts on these moduli spaces can be done using the methods of \cite[Sec.\ 6]{largethesis}. More precisely, we are dealing with a fiber product of two moduli spaces, both of which have the structure of a smooth manifold with corners with smooth evaluation maps. (For the Morse moduli space, this is by \cite[Thm.\ 2.3, Rmk.\ 2.4]{wehrheim2012smooth}; for the Floer moduli space, this follows from the methods of \cite[Sec.\ 6]{largethesis}.) Transversality of the evaluation maps is a property which can be checked on pairs of strata (see \cite[Def.\ 16, Thm.\ 4]{hajek2014manifolds}). It thus reduces to the well-known fact (cf.\ \cite[Sec.\ 3.1]{abouzaid2011cotangent}) that the Floer evaluation map can be made transverse to any submanifold for generic choice of $J$.

To define a map $|\cM_{f}|\to |\cM_{ H,J}|$, we need to frame this bimodule. For this purpose, it is more natural to use the description of the framings on $\cM_{H,J}$ in terms of negative caps (see \cite[Lem.\ 5.5]{cotekartalequivariantfloerhomotopy} and the discussion preceding it). If $W_x$ denotes the framing bundle over $\{x\}\subset crit(f)$ and $V_y^-$ is the bundle over $\{y\}\subset orb(H)$ defined as in loc.\ cit.\, then a point in the moduli of spiked discs has tangent space given by 
\begin{equation}\label{equation:framing-pss}
    W_x+V_y^--\bR^{2k}-T_M,
\end{equation} where $k$ is the stabilization constant (i.e. we fix $T_M\oplus \bC^k\simeq \bC^{n+k}$). This follows from the description of the moduli space of spiked disks as the fiber product of the moduli of half gradient trajectories from $x$ and the moduli of half discs to $x$ over their evaluation maps to $M$. The former has tangent space that can be identified with $W_x$ and the tangent space of the discs can be identified with $V_y^--\bC^k$. By subtracting $T_M$ pulled back along the evaluation map at the spike, we obtain the asserted equivalence. Combining this with \cite[Lem.\ 5.5]{cotekartalequivariantfloerhomotopy} gives us the framings. Therefore, we have a map \begin{equation}\label{equation:pss-last}
	\Sigma^\infty (M/\partial_\infty M)=|\cM_{f}|\to |\cM_{ H,J}|=HF(H,\bS).
\end{equation} 

The verification that \eqref{equation:pss-last} does not depend on $f$ and $(H,J)$ up to homotopy is standard. More precisely, in the formalism of \cite{cotekartalequivariantfloerhomotopy}, moduli spaces of continuation trajectories associated to an interpolation of Morse (resp.\ Floer) data are organized into a flow bimodule \cite[Sec.\ 3.1]{cotekartalequivariantfloerhomotopy}. These bimodules induce maps on geometric realisations, and homotopies of bimodules give rise homotopic maps, etc. One can therefore repeat the same formal arguments as in ordinary Floer homology theory. Analogous considerations also imply the commutativity of \eqref{equation:non-equivariant-pss-commute}.

We now turn to establishing the second bullet point in \Cref{lem:nonequivariant-pss}.

To this end, it is now convenient to set $f= -\overline{H}_a$, which matches $F$ in the interior. It is also convenient to impose some additional conditions on the Hamiltonian-valued $1$-form $H$ on $D$. Referring to the coordinates \eqref{equation:ends-disk}, let us write $H_s(-)=H(s,-)$. Then we require that
\begin{itemize}
\item for $s\leq 0$, $H_s = \overline{H}_a^{\epsilon}  dt$
\item for $0 \leq s \leq 1$, $H_s$ interpolates between $\overline{H}_a^{\epsilon}$ and $\overline{H}_a$. (More precisely, let $\chi: \mathbb{R} \to \mathbb{R}$ be a non-increasing function which is $1$ for $s \leq 0$ and $0$ for $s \geq 1$. Then set $H_s=\overline{H}_a^\epsilon \chi  + \overline{H}_a (1- \chi)$.)
\item for $1 \leq s \leq 2$, $H_s$ interpolates between $\overline{H}_a$ and $e^{-2s} \overline{H}_a$.
\item for $s \geq 2, H_s = e^{-2s} \overline{H}_a dt$
\end{itemize}
Note that this construction ensures that $\partial_s H_s \leq 0$.
We leave to the reader to check that $H$ extends (uniquely) to a Hamiltonian-valued $1$-form on $D$, due to the decay condition as $s \to \infty$.

The virtue of this choice of Hamiltonian is the following: if $y \in orb(\overline{H}^a_\epsilon)$ denotes the output of the spiked disk, then the energy identity \eqref{eq:actionintegralformula}, implies that $-\cA(y) \geq 0$. Since, by \Cref{lemma:perturbed-orbits-characterization}, the non-constant orbits have action roughly $\ell^2/2+ \ell >0$, it follows that $y$ cannot be a non-constant orbit.  Hence our bimodule $\cN^{PSS}$ factors through $F^0\cM_{\overline H^a_\epsilon,J}$ (and the induced map factors through $F^0 HF(\overline H^a_\epsilon,\bS)$).
\begin{rk}\label{remark:h-positive-reason}
    This argument is the reason why we required $\overline{H}^a$ in \Cref{lemma:orbits-characterization} to be non-negative, since otherwise we could not ensure $\partial_s H_s \leq 0$.
\end{rk}

Having shown this, the desired equivalence $\Sigma^\infty (M/\partial_\infty M)\to F^0 HF(\overline H^a_\epsilon,\bS)$ now reduces to classical Floer theory. For example, one can further filter $\cM_{f}$ and $F^0\cM_{\overline H^a_\epsilon,J}$ by index. Since $F$ is $C^2$-small, one can show that the induced map on associated gradeds is the identity, because the only index zero spiked disks which contribute nontrivially are constants (cf.\ \cite[p.\ 462]{mcduff2012j}). Alternatively, one can extend scalars via $\mathbb{S} \to \mathbb{Z}$. The resulting map is the ordinary PSS map on integral Floer homology, which is well-known to be an isomorphism, and the claimed equivalence then follows by Hurewicz. (The cone of $\Sigma^\infty (M/\partial_\infty M)\to F^0 HF(\overline H^a_\epsilon,\bS)$ is a finite spectrum with vanishing integral homology, which is therefore finite.)

We now turn to the construction of the $S^1$-equivariant PSS map and the proof of \Cref{proposition:equivariant-pss}. The proof essentially consists in rerunning the above arguments equivariantly. To do this, we will use the Morse--Bott techniques developed in  \cite{cotekartalequivariantfloerhomotopy}.

Namely, let $f: M \to \bR$ be Morse as above and let $\tilde{f}: S \to \bR$ be the standard Morse-Bott function introduced in \Cref{subsubsection:data-borel}. Let us now consider the Morse--Bott flow category $\cM_{\tilde f, f}$ where: 
\begin{itemize}
	\item $ob(\cM_{\tilde f, f})=crit(\tilde f)\times crit(f)$
	\item the morphisms are negative gradient trajectories of $\tilde f+f$ with respect to the product metric
\end{itemize}
This is an $S^1$-equivariant Morse--Bott flow category. In fact, $\cM_{\tilde f, f}$ is a trivial instance of the type of flow category considered in \cite[\S4]{cotekartalequivariantfloerhomotopy}. Hence Theorem 4.6 of loc.\ cit.\ implies that the geometric realization $|\cM_{\tilde f, f}|$ is $S^1$-equivariantly equivalent to $\Sigma^\infty (M/\partial_\infty M)$ (strictly speaking we are appealing to an open manifold analog of the aforementioned theorem with the same proof).

To write the promised equivariant PSS map $\Sigma^\infty (M/\partial_\infty M)\to HF_S(H,\bS)$, we shall consider an equivariant $\cM_{\tilde f, f}$-$\cM_{\tilde f, H,J}$-bimodule $\cN_{S}^{PSS}$, similarly to before. To define $\cN_{S}^{PSS}$, the essential idea is to consider pairs given by a negative gradient $\eta$ trajectory of $\tilde f$ and a PSS-trajectory ``above $\eta$''; see \Cref{figure:parametrizedspikeddisc}. To implement this idea rigorously, we choose:
\begin{itemize}
	\item an $S^1$-equivariant function $S\times M\times (-\infty,0]\to\bR$ that agrees with $\tilde f+f$ near $-\infty$ and a family of $S^1$-equivariant metrics on $S\times M$ parametrized by  $(-\infty,0]$ that agrees with the product metric near $-\infty$ 
	\item Floer data on $M$ parametrized by $S\times D$ (where $D=\bC\bP^1\setminus \{0\}$ with fixed negative cylindrical end)  that agrees outside a compact subset of $D$ with the $S\times S^1$-parametric data chosen to define $HF_S(H,\bS)$ (in particular, outside a compact set, the data only depends on the argument of a point in $D$). We further assume that this data is circle equivariant with respect to the diagonal action on $S\times D$.
\end{itemize}
Let $\phi:D\to [0,\infty)$ denote an $S^1$-equivariant smoothing of $q\mapsto |q|^{-1}$ near $q=\infty$, obtained by composing $q\mapsto|q|^{-1}$ with a monotone increasing function $[0,\infty)\to[0,\infty)$ that is equal to $cx^2$ near $0$ and that is equal to the identity outside a small neighborhood of $0$. In other words, $\phi$ is a smoothed projection function $D\to [0,\infty)$. See the right half of \Cref{figure:parametrizedspikeddisc}. 

We define a bimodule $\cN_S^{PSS}$ by assigning to $(p,x)\in ob(\cM_{\tilde f, f})$ and $(p',y)\in (\cM_{\tilde f, H,J})$ the set of $(\eta,\theta,u)$ such that 
\begin{itemize}
	\item $\eta$ is a trajectory of $-\nabla \tilde f$ from $p$ to $p'$
	\item $\theta:(-\infty,0]\to M$ such that $(\eta|_{(-\infty,0]},\theta):(-\infty,0]\to S\times M$  is a negative (half) gradient trajectory with respect to the chosen Morse data above 
	\item $u:D\to M$ satisfies the Cauchy--Riemann equation with respect to the chosen Floer data above. In other words, if we denote the Floer data (temporarily) by $(H_{(s,q)},J_{(s,q)})$ (where $H_{(s,q)}$ denotes the Hamiltonian valued $1$-form on $D$), then we have a Floer datum over $D$ given by $(H_{(\eta(\phi(q)),q)},J_{(\eta(\phi(q)),q)})$ and $u$ satisfies 
	\begin{equation}
		(du-X_{H_{(\eta(\phi(q)),q)}})^{0,1}=0
	\end{equation}
	\item $\theta$ is asymptotic to $x$, $u$ is asymptotic to $y$, and $\theta(0)=u(\infty)$
\end{itemize}

In other words, we are considering pairs of Morse trajectories $\eta$ from $p$ to $p'$ and PSS-trajectories within $S\times M$ from $(p,x)$ to $(p',y)$ ``above $\eta$''; see \Cref{figure:parametrizedspikeddisc}.

By construction, this moduli space admits a \emph{free} $S^1$-action, so transversality is standard (cf.\ \cite{bourgeoisoancea}). The smooth structures are similarly induced from smooth structures on the $S^1$-quotiented moduli spaces, as explained in \cite[Sec.\ 7]{cotekartalequivariantfloerhomotopy}. 
Finally, we have natural framings analogous to \eqref{equation:framing-pss}.
\begin{figure}\centering
	\includegraphics[height=4 cm]{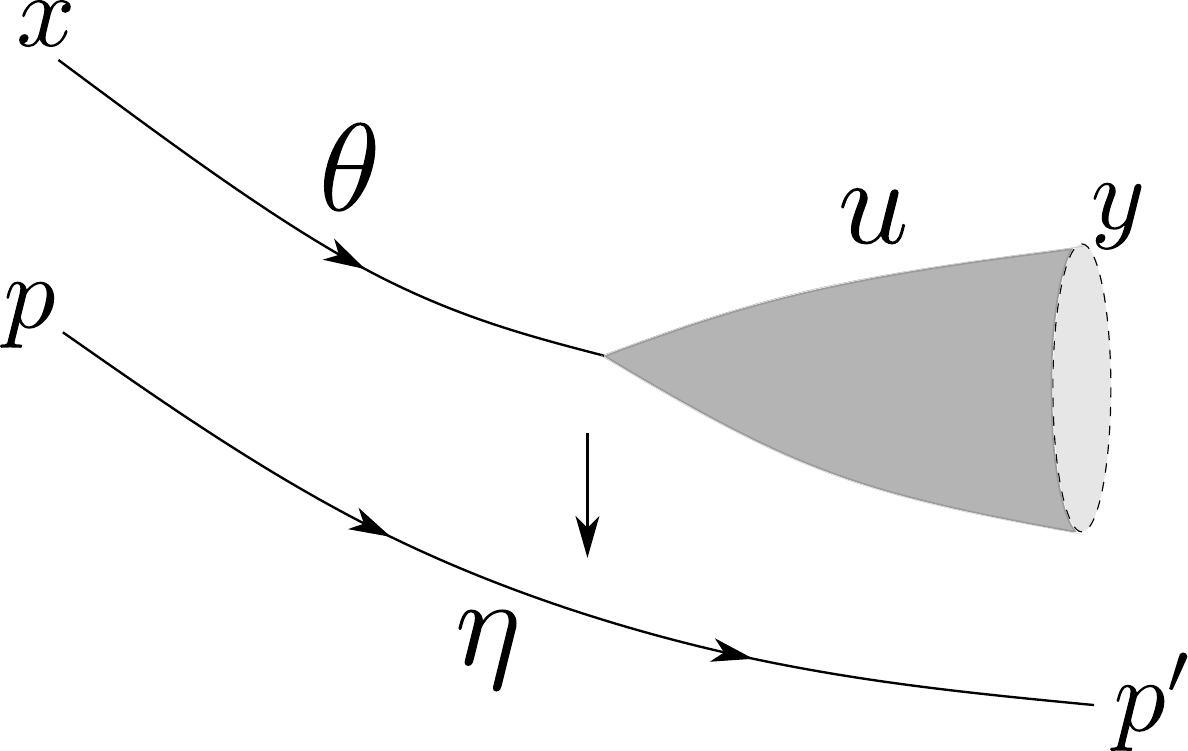}
\caption{A spiked disc ``over $\eta$''}
\label{figure:parametrizedspikeddisc}
\end{figure}

As a result, \cite[Prop.\ 3.4]{cotekartalequivariantfloerhomotopy} furnishes an equivariant PSS-map 
\begin{equation}\label{equation:equivariant-pss}
    \Sigma^\infty (M/\partial_\infty M)\simeq \Sigma^\infty (S\times M/S\times \partial_\infty M) \to HF_S(H,\bS).
\end{equation} 
Similarly to the non-equivariant case, the compatibility with the continuation map follows from standard Floer theory and \cite[\S3.4]{cotekartalequivariantfloerhomotopy}. 

Finally, we prove the equivalence $\Sigma^\infty (M/\partial_\infty M) \xrightarrow{\simeq} F^0HF_S(H^a_\epsilon; \bS)$, corresponding to the second bullet point of \Cref{proposition:equivariant-pss}. 
We will use a filtration on the $S$ component to reduce to the proof of \Cref{lem:nonequivariant-pss} (similarly to \cite[Proof of Thm.\ 4.6]{cotekartalequivariantfloerhomotopy}).

To begin with, and as in the proof of \Cref{lem:nonequivariant-pss}, it is convenient to make a more careful choice of Floer data.  Consider the inclusion $S \times (\mathbb{R} \times S^1)= S \times (D- \{\infty\})  \subset S \times D$. Using the forgetful map $S \times (\mathbb{R} \times S^1) \to \mathbb{R}$, we can think of $S^1$-equivariant Floer data on $S \times (D- \{\infty\})$ as an $\mathbb{R}$-parametrized family of equivariant Floer data over $S \times S^1$.  We write this as $(f_s, H_s)_{s \in \mathbb{R}}$, for $H_s \in Maps(S \times S^1, C^\infty(M))$. For $s  \leq 0$, $(f_s, H_s)_{s \in \mathbb{R}}$ is constant and agrees with the $S \times S^1$-parametric data $(\tilde{f}, H^a_\epsilon)$ constructed in \Cref{subsubsection:borel-jingyu-data}.  We now choose the interpolation of the form $(f_s, \chi(s) H_a^\epsilon + (1-\chi(s)) H_a)$ for $0 \leq s \leq 1$; and on $1 \leq s \leq 2$, we interpolate between $H_a$ and  $e^{-2s} H_a$. We again leave it to the reader to check that this data exends to $S \times D$.
Arguing exactly as in the proof of \Cref{lem:nonequivariant-pss}, this choice ensures that there are no $S$-parametrized PSS trajectories with input a constant trajectory and output a non-constant trajectory. (Note that as $(f_s,H_s)$ varies equivariantly in $S \times S^1$, the induced map is also equivariant.)

The flow categories $\cM_{\tilde f, f}$ and $\cM_{\tilde f, H,J}$ are both filtered by the value of $\tilde f$.  The bimodule $\cN^{PSS}_S$ respects this filtration, so it is enough to prove that the induced map is an equivalence on associated gradeds.  
Let the $S^1$-torsor $X$ be the critical set of $\tilde{f}$ at level $p$. Let $s_0 \in X$. 
The subquotients of $\cM_{\tilde f, H,J}$ do not see the non-constant trajectories, and are homotopy equivalent to the $S^1$-spectrum $X^{T^dX} \wedge |\cM_{\tilde f, H_{s_0},J_{s_0}}|$, where $T^dX$ is the descending tangent bundle of $X$ (i.e.\ the tangent bundle of the descending manifold or equivalently the directions in $TX$ along which the Hessian is negative). 
The subquotients of $\cM_{\tilde f, f}$ are similarly $X^{T^dX} \wedge |\cM_f|$.   
The induced map is just $id \wedge \cN^{PSS}_*$, where $\cN^{PSS}_*$ refers to the non-equivariant PSS map which was already shown to be an equivalence in \Cref{lem:nonequivariant-pss}. This verifies the asserted equivalence on associated gradeds, and thus completes the proof. 

\section{Local Floer homotopy computation}\label{subsec:localfloer}
The goal of this section is to prove the claims \eqref{item:localfloer} and \ref{item:localeq} that were stated in  \Cref{subsubsection:perturbing-autonomous}. We carry over the notation from the previous sections. In particular, we fix a slope $a>0$ and let $\gamma$ be a $k$-fold orbit of the autonomous Hamiltonian $\overline{H}^a$ of action $p$. Let $0<\delta \ll 1$ be as in \Cref{subsubsection:borel-jingyu-data} (in particular, smaller than the action gap between orbits of $\overline{H}^a$). Let $\epsilon$ be as in \Cref{subsubsection:borel-jingyu-data}.\footnote{Recall from \Cref{subsubsection:perturbing-autonomous} that $0<\epsilon \ll \delta$ controls the size of the perturbation of the autonomous Hamiltonian $\overline{H}^a$. When we extend this construction to the Borel-equivariant setting in \Cref{subsubsection:borel-jingyu-data}, then recall that $\epsilon$ is secretly shorthand for a decreasing sequence $\{\epsilon_i\}$, where $\epsilon_i$ controls the size of the perturbation over the unique index $2i$ critical points of $f: \mathbb{CP}^\infty \to \bR$. }

First observe
\begin{lem}\label{lem:onelemma}
A framed flow category $\cM$ with two objects $x,y$, and two morphisms in $\cM(x,y)$ has geometric realization equivalent to either $\Sigma^\infty(S^1)_+$ or $\Sigma^\infty \bR\bP^2$ up to shift. In other words, it is the same as a Thom spectrum over $S^1$.
\end{lem}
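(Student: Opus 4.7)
The strategy is to identify $|\cM|$ as the cofiber of a stable map between two shifted sphere spectra, and then to run a short case analysis on the possible values of the attaching map.

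First, for a framed flow category with only two objects $x, y$ and a single nontrivial morphism space $\cM(x,y)$, the geometric realization construction of \cite{cotekartalequivariantfloerhomotopy} collapses (since there are no intermediate objects and no higher broken trajectories to organize) to a cofiber sequence of spectra
\begin{equation}
\Sigma^{n_y}\bS \longrightarrow |\cM| \longrightarrow \Sigma^{n_x}\bS,
\end{equation}
where $n_x, n_y$ are the (virtual) dimensions of the framing bundles on $x, y$, and whose connecting map $\Sigma^{n_x}\bS \to \Sigma^{n_y+1}\bS$ is precisely the Pontryagin--Thom stable map of the framed $0$-manifold $\cM(x,y)$. Since $\dim \cM(x,y) = 0$ forces $n_x = n_y + 1$, this connecting map is an element of $\pi_0\bS = \bZ$.

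Second, as the Pontryagin--Thom invariant of a framed $0$-manifold consisting of two points, the connecting map equals the sum $\pm 1 \pm 1$ of the two framing signs and hence lies in $\{-2,0,+2\}$. If it is zero, then $|\cM| \simeq \Sigma^{n_y}\bS \vee \Sigma^{n_x}\bS$, which up to shift is $\Sigma^\infty (S^1)_+$, the Thom spectrum of the trivial rank-$0$ bundle over $S^1$. If it is $\pm 2$, then $|\cM|$ is a shift of the mod-$2$ Moore spectrum $\bS/2$; using the cell structure $\bR\bP^2 = S^1 \cup_{\cdot 2} D^2$ we have $\bS/2 \simeq \Sigma^{-1}\Sigma^\infty \bR\bP^2$, and since the Möbius line bundle $L \to S^1 = \bR\bP^1$ has Thom space $\operatorname{Th}(L)= \bR\bP^2$, this is again (up to suspension) the Thom spectrum of a line bundle over $S^1$. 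In either case we conclude that $|\cM|$ is, up to suspension shift, a Thom spectrum of a virtual line bundle over $S^1$, as required.

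The main subtlety is justifying the first step: one needs to confirm that for a two-object framed flow category the realization machinery of \cite{cotekartalequivariantfloerhomotopy} genuinely produces the above cofiber sequence with attaching map the Pontryagin--Thom collapse of $\cM(x,y)$. This is implicit in the ``chain complex in spectra'' formulation of loc.\ cit.; in the present very restrictive setting there is no combinatorial bookkeeping beyond the two cells and one differential between them, so the reduction to a $\pi_0\bS$-computation is clean, and the rest is standard stable homotopy theory.
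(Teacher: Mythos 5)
Your argument is correct, and it takes a genuinely different route from the paper's. The paper identifies the underlying unframed flow category $\cM$ with the Morse flow category of the standard height function on $S^1$, observes that any framing of $\cM$ is the standard one twisted by a virtual bundle $\nu$ over $S^1$, and then invokes \cite[Theorem 4.6']{cotekartalequivariantfloerhomotopy} to conclude $|\cM|\simeq(S^1)^\nu$ directly; the dichotomy in the statement is then simply the classification $\widetilde{KO}^0(S^1)\cong\bZ/2$ of stable virtual bundles on $S^1$ (trivial versus M\"obius). You instead unwind the realization to a two-cell cofiber sequence and compute the attaching map in $\pi_0\bS\cong\bZ$ via Pontryagin--Thom applied to the framed $0$-manifold $\cM(x,y)$, getting $\pm1\pm1\in\{-2,0,2\}$ and then splitting into the $0$ and $\pm2$ cases. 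Both arguments are valid and of comparable length. The paper's route produces the Thom-spectrum identification $|\cM|\simeq(S^1)^\nu$ intrinsically and in one stroke, which keeps the statement uniform with the equivariant version used later; your route is more elementary and self-contained, bypassing the realization theorem for Morse flow categories and reducing everything to $\pi_0\bS$ together with the cell structure of $\bR\bP^2$, and it makes the contribution of the two framing signs completely explicit. The one point worth spelling out a bit more in your write-up is the claim that the realization of a two-object framed flow category \emph{is} the cofiber of the Pontryagin--Thom map of $\cM(x,y)$: this is indeed how the ``chain complex in spectra'' formalism of \cite{cotekartalequivariantfloerhomotopy} specializes when there are no compositions to organize, but since it is the load-bearing step it deserves a sentence anchoring it to the relevant construction in loc.\ cit.
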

\begin{proof}
$\cM$ is the flow category of the standard Morse function on $S^1$, and different framings correspond to the standard framing twisted by a virtual bundle $\nu$ over $S^1$. By \cite[Theorem 4.6']{cotekartalequivariantfloerhomotopy}, $|\cM|\simeq (S^1)^\nu$. The virtual bundles on $S^1$ are are equivalent to the trivial bundle or the M\"obius strip (up to shift). In the former case, $|\cM|$ is equivalent $\Sigma^\infty(S^1)_+$, and in the latter to $\Sigma^\infty \bR\bP^2$ (up to shift). 
\end{proof}

As explained in \Cref{subsubsection:perturbing-autonomous}, it is a consequence of \cite[Lem.\ 2.2]{cieliebakhoferwysockiapplsymplchlg} that the spectrum $F^{(p-\delta,p+\delta]} HF(\overline H^a_\epsilon,\bS)$ is the geometric realization of a flow category satisfying the conditions of \Cref{lem:onelemma}. This implies \eqref{item:localfloer}. However, this argument is harder to implement in the equivariant case. In \cite{cotekartalequivariantfloerhomotopy}, we introduced the formalism of \emph{$P$-relative modules}. Recall that for a space $P$, a $P$-relative module is a module over the flow category with an extra evaluation map into $P$. The examples discussed in loc.\ cit.\ include: (i) For the flow category associated to a Morse/Morse--Bott function, the descending manifolds. The evaluation map is the inclusion into the manifold itself. (ii) For a Liouville manifold $M$, and $P=\cL Q$, the free loop space of an exact Lagrangian $Q\subset M$, the holomorphic half cylinders with boundary on $Q$. The evaluation map is the boundary of the half cylinder which is a loop in $Q$. With an appropriate notion of framing, such modules allow us to define maps into (Thom spectra over) $P$. Here also we will define a map using this formalism, where $P$ is an autonomous orbit. This will give an alternative proof of \eqref{item:localfloer}, which we will then rerun equivariantly to produce the map in \ref{item:localeq}. 

We begin by considering the (non-equivariant) flow category $\cM_{\overline H^a_\epsilon,J}$. 
Let $X_\gamma\simeq S^1/C_k\simeq S^1$ denote the image of $\gamma$. We define a $X_\gamma$-relative module on $F^{p+\delta}\cM_{\overline H^a_\epsilon,J}$ by using the moduli space of continuation trajectories with output given by (some rotation of) $\gamma$. Note that, heuristically, this is like using continuation maps from $HF(\overline H^a_\epsilon,\bS)$ to ``$HF(\overline H^a,\bS)$''. We prefer to avoid formally defining $HF(\overline H^a,\bS)$ as $\overline H^a$ is autonomous and we do not wish to discuss gluing of trajectories along the orbit of an autonomous Hamiltonian.

Choose generic Floer data parametrized by the cylinder that is equal to $(\overline H^a_\epsilon,J)$ on the input (positive) end and whose Hamiltonian term is the same as $\overline H^a+\epsilon$ on the output end. We also assume the Hamiltonian has negative derivative in the $s$-direction. 
Given $x\in ob(F^{p+\delta}\cM_{\overline H^a_\epsilon,J})$, define $\cN(x)$ to be the moduli space of broken trajectories from $x$ to an orbit of $\overline H^a$ supported at $X_\gamma$ (i.e.\ some $\gamma(t+\theta)$). 
\begin{lem}\label{lemma:rel-module-compact}
    $\cN$ is a $X_\gamma$-relative module over $F^{p+\delta}\cM_{\overline H^a_\epsilon,J}$.
\end{lem}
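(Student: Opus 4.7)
The plan is to verify the three ingredients that make $\cN$ a $X_\gamma$-relative module over $F^{p+\delta}\cM_{\overline{H}^a_\epsilon,J}$: (i) each $\cN(x)$ is a compact smooth manifold with corners equipped with an evaluation map to $X_\gamma$ of the expected dimension, (ii) the codimension-one boundary strata decompose according to the composition operations of $\cM_{\overline{H}^a_\epsilon,J}$, and (iii) the natural framings match the framings on $\cM_{\overline{H}^a_\epsilon,J}$ twisted by data on $X_\gamma$.

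The main work goes into (i). First, a standard maximum principle applies because the output Hamiltonian is linear at infinity and $\partial_s H \le 0$ along the continuation, confining all trajectories to a uniformly compact subset of $M$. For the action bound, the input action is at most $p+\delta$ and the output action is approximately $p$, so the energy is uniformly bounded. The Gromov--Floer compactification therefore consists of broken cylinders in which every breaking orbit lies either in $ob(\cM_{\overline H^a_\epsilon,J})$ (at the input side) or is a time-$1$ orbit of $\overline{H}^a$ (at the output side). By the choice of $\delta$ and the action bound, any breaking orbit of $\overline{H}^a$ at the output lies in the $S^1$-family $X_\gamma$, since $\gamma$ is the unique critical family of $\overline{H}^a$ with action in $[p-\delta,p+\delta]$. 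To avoid Morse--Bott breaking at the output end it is convenient to define $\cN(x)$ as the space of \emph{unbroken} continuation cylinders with output asymptotic to some $\gamma(\cdot + \theta)$, modulo $\bR$-translation but not $S^1$-rotation; the evaluation map then records $\theta \in X_\gamma$. Transversality for such configurations, including at the Morse--Bott output, is achieved by choosing generic $s$-dependent Floer data compatible with the diagonal $S^1$-action along the family, using the argument of \cite{bourgeoisoancea}; this endows each $\cN(x)$ with the structure of a smooth manifold with corners and the desired evaluation to $X_\gamma$.

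For (ii), the codimension-one boundary strata correspond to breaking at the input end only (output breaking would be codimension $\ge 2$ once we quotient by the Morse--Bott parameter, equivalently is absorbed into the evaluation to $X_\gamma$). Standard gluing then identifies these strata with disjoint unions
\begin{equation}
    \partial \cN(x) \;=\; \bigsqcup_{x'} \cM_{\overline H^a_\epsilon,J}(x,x') \times \cN(x')
\end{equation}
over orbits $x'$ of action at most $p+\delta$, which is precisely the axiom for a relative module. For (iii), the tangent bundle of $\cN(x)$ at a point above $\theta \in X_\gamma$ acquires the identification
\begin{equation}
    T\cN(x) \;\simeq\; W_x + V^-_{\gamma,\theta} - \bR^{2k} - T_M + T_\theta X_\gamma,
\end{equation}
in direct analogy with \eqref{equation:framing-pss}, where the extra $T_\theta X_\gamma$ summand records the evaluation direction; pulling back the framings on $\cM_{\overline H^a_\epsilon,J}$ via the boundary identification of (ii) yields compatible framings on $\cN$ as a relative module, just as in \cite{cotekartalequivariantfloerhomotopy}.

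The main obstacle I anticipate is (i): ensuring that no bubbling from the Morse--Bott $S^1$-family at the output produces unwanted boundary, and establishing the Morse--Bott transversality with a family of parameterized outputs. The trick is to bake the output rotation into $\cN(x)$ via the evaluation map so that Morse--Bott breaking at the negative end becomes codimension at least two, and to use the free $S^1$-action to reduce transversality to the quotient, as in the PSS argument of the previous section.
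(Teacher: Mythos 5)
Your overall strategy --- verifying compactness, boundary structure, and framings --- is the right shape, and you correctly identify that the crucial observation is that orbits of $\overline{H}^a$ with action within $\delta$ of $p$ all lie in the family $X_\gamma$. But there are two real problems in the compactness step, which is where the lemma lives.

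First, you write that $\cN(x)$ consists of unbroken cylinders ``modulo $\bR$-translation.'' This is not correct: the Floer data is $s$-dependent continuation data (it equals $(\overline{H}^a_\epsilon, J)$ on the positive end and has Hamiltonian term $\overline{H}^a + \epsilon$ on the negative end, with $\partial_s H \le 0$), so the equation is not translation-invariant and there is no $\bR$-action to quotient by. Second, and more substantively, your explanation of why output breaking can be ignored is not the right argument. You assert that output breaking ``would be codimension $\ge 2$ once we quotient by the Morse--Bott parameter, equivalently is absorbed into the evaluation.'' In fact, breaking through a Morse--Bott critical manifold is codimension one, not two, and in any case the point is not one of codimension. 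What the paper actually shows is that output breaking simply cannot occur: if a sequence in $\cN(x)$ broke on the $\overline{H}^a$ side, the continuation piece would run from an orbit $x$ with $\cA(x) \le p+\delta$ to an orbit $y$ of $\overline{H}^a$ with $\cA(y)$ strictly above $p$ (since the residual $\overline{H}^a$-trajectory is nonconstant), hence $\cA(y) > p+\delta$ by the choice of $\delta$; but the condition that the Hamiltonian term has nonpositive $s$-derivative forces continuation trajectories to decrease action, which rules this out. This action argument --- not a codimension count --- is what makes the compactification work and is the heart of the lemma. It is also precisely the reason the authors can avoid discussing gluing along the Morse--Bott family of an autonomous Hamiltonian, which your ``codimension $2$'' heuristic obscures rather than resolves.

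A smaller point: invoking $S^1$-equivariant transversality à la Bourgeois--Oancea here is premature. Lemma \ref{lemma:rel-module-compact} concerns the non-equivariant module $\cN$, whose moduli spaces carry no circle action (the input $x$ is a fixed nondegenerate orbit of $\overline{H}^a_\epsilon$). Transversality is achieved by generic Floer data; what does need care is the Morse--Bott degeneracy at the output, handled via weighted Sobolev spaces as in Bourgeois's treatment of Morse--Bott symplectic homology. The equivariant transversality machinery enters only later, for $\cN^{eq}$, where the moduli spaces genuinely do carry a free $S^1$-action.
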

\begin{proof}
 The key point is compactness. Given $x\in ob(F^{p+\delta}\cM_{\overline H^a_\epsilon,J})$, we are asserting that a sequence of trajectories from $x$ to an orbit of $t\mapsto \gamma(t+\theta)$ of $\overline{H}^a$ breaks into a broken Floer trajectory $\overline{H}_\epsilon^a$ from $x$ to some $x' \in ob(F^{p+\delta}\cM_{\overline H^a_\epsilon,J})$, followed by a unbroken trajectory from $x'$ to some $\gamma(t+\theta_0)$. In other words, breakings on the $\overline{H}^a$ side cannot occur.  This is true for elementary action reasons: if our sequence of curves broke on the $\overline{H}^a$ side, then we would have a trajectory from an object of $F^{p+\delta}\cM_{\overline H^a_\epsilon,J}$ to a higher action orbit of $\overline H^a$. This is not possible due to the negative derivative condition in $s$-direction. 

Smoothness of the moduli spaces is achieved by choosing our Floer data generically. Since trajectories can break on the input end, $\cN$ defines a module over $F^{p+\delta}\cM_{\overline H^a_\epsilon,J}$.  Moreover, there is a compatible evaluation map $\cN(x)\to X_\gamma$: send a trajectory that is asymptotic to $\gamma(t+\theta)$ at the negative end to $\gamma(\theta)\in X_\gamma$ (one can identify $X_\gamma$ with these orbits, we are sending a trajectory to the orbit it is asymptotic to). 
\end{proof}
We now discuss framings, which are similar to the standard continuation maps. More precisely, recall that we fix an identification $TM\oplus\bC^k\cong \bC^{n+k}$, and consider the space $\cU_\gamma$ of abstract caps given by the tuples $(y,L,J,Y,g)$ where
\begin{itemize}
	\item $y\in X_\gamma$, also thought of as an orbit of $\overline{H}^a$ of the form $t\mapsto \gamma(t+\theta_0)$
	\item $\epsilon:[0,\infty)\times S^1\to S_-:=\bC$ is a tubular end
	\item $L>0$ is a real number 
	\item $(J,Y)$ is a Floer datum on $S_-\times \bC^{n+k}$ that matches over $\epsilon([L,\infty)\times S^1)$ with the datum for the continuation trajectories under stabilization and the identification $TM\oplus\bC^k\cong \bC^{n+k}$
	\item $g$ is a metric on $S_-$ that matches over $\epsilon([L,\infty)\times S^1)$ with the standard metric
\end{itemize}
This is an autonomous version of \cite[Definition 5.2]{cotekartalequivariantfloerhomotopy}. We have
\begin{enumerate}
	\item a homotopy equivalence $\cU_\gamma\to X_\gamma$ given by $(y,L,J,Y,g)\mapsto y$
	\item a map $\cU_\gamma\to Fred$, the space of Fredholm operators (also see the discussion in \cite[Note 6.7]{cotekartalequivariantfloerhomotopy})
\end{enumerate}

Concretely (1) is just saying that the space of abstract caps over each $y \in X_\gamma$ is contractible; meanwhile (2) is a consequence of the fact that each abstract cap determines a Cauchy--Riemann operator $D_w: W_1^{2, \delta_0}(S_-,\mathbb{C}^{n+k}) \to L^{2, \delta_0}(S_-, \Omega^{0,1}_{S^-} \otimes \mathbb{C}^{n+k})$, which is Fredholm.\footnote{Here $W_1^{2, \delta_0}(\dots), L^{2, \delta}(\dots)$ denotes the subspaces of $W^2_1(\dots), L^2(\dots)$ of sections which decay at infinity at least as fast as $e^{-\delta_0 s}$, for some $0<\delta_0 \ll 1$ which depends only on $M$; see e.g. \cite{bourgeois2002morse, bourgeois2009symplectic} and \Cref{note:weighted-sobolev}.}

We denote by $\nu$ the index bundle over $X_\gamma \simeq U_\gamma$ induced by the map  $\cU_\gamma\to Fred$.  Although we do not need this yet, let us observe already that $\cU_\gamma$ carries a circle action, and both the homotopy equivalence and the map into $Fred$ are compatible with the circle action. As a result (see \cite[Remark 5.7]{cotekartalequivariantfloerhomotopy}) $\nu$ is an $S^1$-equivariant bundle over $\cU_\gamma$ and over $X_\gamma$.

Following \cite[Sec.\ 8]{largethesis}, linear gluing of index bundles furnishes isomorphisms
\begin{equation}
	V_x= T_{\cN(x)}\oplus \nu,
\end{equation}
giving us a framing of $\cN$, in the sense of \cite[Definition 3.5]{cotekartalequivariantfloerhomotopy}.

\begin{note}\label{note:weighted-sobolev}
The appearance of the weighted Sobolev spaces $W^{2,\delta}_1(\dots), L^{2, \delta}(\dots)$ is unavoidable because $X_\gamma$ has a Morse--Bott degeneracy. Since Large's construction of coherent framings \cite[Sec.\ 8]{largethesis} is written in the non-degenerate setting, let us briefly sketch his construction and explain why it generalizes to our setting. Given a curve $u \in \cM(x,y)$ and an abstract cap $w \in \cU(y)$, one first constructs a cap $G(u,w) \in \cU(x)$ by using cutoff functions to glue the underlying Riemann surfaces and Floer data;  given sections $f \in ker(D_{u})$ and $g \in ker(D_w)$, one similarly defines an approximate solution $f \# g$ via cutoffs. This approximate solution does not lie in the kernel of $D_{G(u,w)}$, but it is close (in a precise sense depending on the gluing length). Now compose with the projection to land in the kernel. This defines a map $ker(D_u) \oplus ker(D_w) \to ker(D_{G(u,w)})$; an elaboration of this argument allows one to glue the index bundles (after modifying $D_w, D_{G(u,w)}$ by a linear term to kill the cokernel). To run this construction, one needs to prove estimates which ensure that the approximate solution is close to an honest solution; this is the content of \cite[Lem.\ 8.7]{largethesis}.  Happily, analogous estimates are known in the Morse--Bott setting: indeed, the proof of \cite[Lem.\ 8.7]{largethesis} needs (i) exponential decay estimates and (ii) an estimate on the operator norm of the right inverse.  For (i), see e.g.\ \cite[Prop.\ A.2]{bourgeois2009symplectic} and for (ii), see e.g.\ \cite[Prop.\ 5]{bourgeois2004coherent}.  
\end{note}
\begin{note}
    In contrast to \Cref{note:weighted-sobolev}, generalizing \cite[Sec.\ 6]{largethesis} to Morse--Bott flow categories would require control of \emph{derivatives} of gluing maps, going beyond the estimates available in ``classical'' sources such as \cite{bourgeois2004coherent, bourgeois2009symplectic}. This is precisely the reason why we have not attempted to define $\cN$ as a $\cM_{\overline H^a,J}-\cM_{\overline H^a_\epsilon,J}$-bimodule, and more generally why we never consider flow categories associated to an autonomous Hamiltonian.
\end{note}



Therefore, we get a map 
\begin{equation}\label{eq:thismap}
	F^{p+\delta}HF(\overline H^a_\epsilon,\bS):=|F^{p+\delta}\cM_{\overline H^a_\epsilon,J}|\to (X_\gamma)^\nu.
\end{equation}By the same action considerations as in the proof of \Cref{lemma:rel-module-compact}, given any $x\in ob(F^{p-\delta}\cM_{\overline H^a_\epsilon,J})$, the set $\cN(x)$ is empty. Therefore \eqref{eq:thismap} factors through \begin{equation}\label{eq:localmap}
	F^{(p-\delta,p+\delta]} HF(\overline H^a_\epsilon,\bS)\to  (X_\gamma)^\nu.
\end{equation} 
Before moving on to the equivariant case, we show 
\begin{prop}\label{prop:localeqnoneq}
	\eqref{eq:localmap} is an equivalence.
\end{prop}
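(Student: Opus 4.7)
The plan is to analyze the bimodule $\cN$ directly, using action constraints to reduce to the two objects $\gamma^\pm$, and then to identify the resulting map with the Pontryagin--Thom realization of a bimodule comparing the perturbed-Morse flow category $F^{(p-\delta,p+\delta]}\cM_{\overline H^a_\epsilon,J}$ with a Morse--Bott model built directly from the autonomous $S^1$-family $X_\gamma$. This strategy has the advantage that it will generalize cleanly to the equivariant setting required for \ref{item:localeq}.

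First, I would verify that $\cN(x)=\emptyset$ for every $x\in \text{ob}(F^{p+\delta}\cM_{\overline H^a_\epsilon,J})\setminus\{\gamma^+,\gamma^-\}$. Any such $x$ satisfies $\cA(x)\leq p-\delta$, while the output orbits $\gamma(t+\theta)$ have action $p-\epsilon$ as orbits of $\overline H^a+\epsilon$; monotonicity of the continuation data together with energy nonnegativity (as in the proof of \Cref{lemma:rel-module-compact}) then force $\cA(x)\geq p-\epsilon$, a contradiction since $\epsilon<\delta$. Hence \eqref{eq:localmap} is determined by $\cN(\gamma^\pm)$.

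Next, I would identify $\cN(\gamma^+)$ with $X_\gamma$ via the output evaluation. The key input is that $\gamma^+(t)=\gamma(t+1/(2k))$ is \emph{itself} an orbit of $\overline H^a$ of action matching the outputs, so for continuation data obtained as a small generic perturbation of the tautological monotone homotopy $\overline H^a+\epsilon h_t\rightsquigarrow \overline H^a+\epsilon$, there is an ``obvious'' near-constant Floer cylinder from $\gamma^+$ to each $\gamma(t+1/(2k)+\theta)$. These cylinders assemble into an inverse to the evaluation. For sufficiently small $\epsilon$, a Gromov-type compactness argument rules out any other trajectories: a hypothetical sequence of extra solutions would, as $\epsilon\to 0$, converge to a nontrivial finite-energy Floer cylinder for the autonomous Hamiltonian $\overline H^a$, ruled out by action considerations. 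The moduli $\cN(\gamma^-)$, of formal dimension $2$, is then analyzed by standard gluing: its codimension-one boundary is identified with $\cM_{\overline H^a_\epsilon,J}(\gamma^-,\gamma^+)\times_{\gamma^+}\cN(\gamma^+)\cong\{1,2\}\times X_\gamma$, exhibiting $\cN(\gamma^-)$ as a cobordism over $X_\gamma$ between two copies of $X_\gamma$, each mapping by the identity.

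The framings on $\cN(\gamma^\pm)$ are, by construction, governed by the same Fredholm index bundle $\nu$ over $X_\gamma$ that defines the target $(X_\gamma)^\nu$. Combined with the previous step, this means that $\cN$ realizes a direct comparison between $F^{(p-\delta,p+\delta]}\cM_{\overline H^a_\epsilon,J}$ and the one-family Morse--Bott flow category with underlying space $X_\gamma$ and framing $\nu$; the latter has geometric realization $(X_\gamma)^\nu$ by (an open-manifold analogue of) \cite[Thm.\ 4.6]{cotekartalequivariantfloerhomotopy}, and the induced map is tautologically the identity. The main obstacle I anticipate is the compactness assertion above---ruling out ``long'' Floer trajectories that leave a neighborhood of $X_\gamma$ before returning---which requires a uniform-in-$\epsilon$ estimate, likely via neck-stretching or a Gromov compactness argument together with action bounds. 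The gluing step is routine but demands care with the weighted Sobolev spaces adapted to Morse--Bott asymptotics, as in \cite{bourgeois2009symplectic}; once these analytic inputs are in place, the proof reduces to the general principle that a framed bimodule realizing an equivalence of flow categories induces an equivalence of geometric realizations.
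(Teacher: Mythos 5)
Your approach differs substantially from the paper's, and there are two real gaps. The paper does \emph{not} attempt to identify the moduli spaces $\cN(\gamma^\pm)$ explicitly. Instead, it introduces a second ``reverse'' framed bimodule $\cN'$ inducing a map $X_\gamma^\nu\to F^{(p-\delta,p+\delta]}HF(\overline H^a_\epsilon,\bS)$, shows by a cobordism-in-$\epsilon$ argument that the composite $X_\gamma^\nu\to F^{(p-\delta,p+\delta]}HF(\overline H^a_\epsilon,\bS)\to X_\gamma^\nu$ is homotopic to the identity, then invokes the stable Hurewicz theorem to reduce to integral homology and finally uses \Cref{lem:onelemma} (both spectra are shifts of either $\Sigma^\infty S^1_+$ or $\Sigma^\infty \bR\bP^2$) to upgrade the resulting split injection on homology to an isomorphism. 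This indirect route is designed precisely to sidestep the difficult analysis near an autonomous Morse--Bott orbit; as the paper remarks when introducing $\cN$, the authors ``do not wish to discuss gluing of trajectories along the orbit of an autonomous Hamiltonian.'' Your proposal reinstates exactly that analysis.

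Concretely, the first gap is your claim that $\cN(\gamma^+)$ evaluates by a framed diffeomorphism to $X_\gamma$ for small $\epsilon$. The ``near-constant cylinder'' heuristic is not right as stated: for a fixed small $\epsilon$, only the output $\gamma(t+1/(2k))$ is close to $\gamma^+$, while the other rotations $\gamma(t+1/(2k)+\theta)$ are a bounded distance away, so the trajectories to them are not small-energy. What you are really asserting is a uniqueness/surjectivity statement about continuation trajectories to a Morse--Bott family, proved by a uniform-in-$\epsilon$ Gromov compactness and gluing argument with weighted Sobolev asymptotics; this is a substantial piece of analysis that you gesture at but do not carry out, and whose degree of difficulty is what motivated the paper's choice of proof. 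The second gap is the final sentence: ``the general principle that a framed bimodule realizing an equivalence of flow categories induces an equivalence of geometric realizations'' is not something the paper's framework provides, and it is not clear what ``realizing an equivalence of flow categories'' would mean here --- the source has two objects with two morphisms and the target $\cM_\gamma$ is a single Morse--Bott family with no morphisms. Even granting your moduli identifications and the cobordism structure of $\cN(\gamma^-)$, you would still need an argument (some version of a filtration/spectral-sequence comparison, or precisely the Hurewicz step the paper uses) to conclude that the induced map of realizations is an equivalence. Without that, the proof is incomplete.
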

Before starting the proof, observe that one can see $(X_\gamma,\nu)$ as a framed Morse--Bott flow category with no morphisms (where $\nu$ is the framing bundle). We denote it by $\cM_\gamma$. Similarly, the module $\cN$ can be seen as a framed $F^{(p-\delta,p+\delta]}\cM_{\overline{H}^a_\epsilon,J}$-$\cM_\gamma$-bimodule. If we had formally constructed the category ``$\cM_{\overline{H}^a,J}$'' associated to the autonomous Hamiltonian $\overline{H}^a$, then $\cM_\gamma$ would be $F^{(p-\delta,p+\delta]}\cM_{\overline{H}^a,J}$. In particular, $\cM_\gamma$ can be interpreted as the local Floer homology of the open set $V_\gamma\subset M$. 

We now introduce a $\cM_\gamma$-$F^{(p-\delta,p+\delta]}\cM_{\overline{H}^a_\epsilon,J}$-bimodule $\cN'$ that will be a partial inverse to $\cN$. Fix Floer data over the cylinder whose Hamiltonian term is $\overline{H}^a$ on the positive (input) end and $\overline{H}^a_\epsilon$ on the negative (output) end. Given $y\in ob(\cM_\gamma)$, $x\in F^{(p-\delta,p+\delta]}\cM_{\overline{H}^a_\epsilon,J}$ define $\cN'(y,x)$ to be the compactified moduli space of cylinders that is asymptotic to $y$ on the positive (input) end (i.e. to some $\gamma(t+\theta)$) and to $x$ on the negative (output) end. This varies continuously in $y$ and defines a $\cM_\gamma$-$F^{(p-\delta,p+\delta]}\cM_{\overline{H}^a_\epsilon,J}$-bimodule. The framing of $\cN'$ is analogous to that of $\cN$.

We would like to warn the reader that $\cN'$ does not extend to a $\cM_\gamma$-$F^{p+\delta}\cM_{\overline{H}^a_\epsilon,J}$-bimodule. Indeed, to extend to such a bimodule, we would need to add trajectories from $y\in X_\gamma$ to lower action orbits of $\overline{H}^a_\epsilon$, as they naturally appear in composition. But the moduli space of such trajectories is not compact, as it does not include the breakings on the autonomous side. Morally, $\cN'$ extends to a ``$F^{p+\delta}\cM_{\overline{H}^a,J}$''-$F^{p+\delta}\cM_{\overline{H}^a_\epsilon,J}$-bimodule; however, we avoid this as we have not defined the former category.  
\begin{proof}[Proof of \Cref{prop:localeqnoneq}]
$\cN'$ gives us a map $X_\gamma^\nu\to 	F^{(p-\delta,p+\delta]}HF(\overline{H}^a_\epsilon,\bS)$. By composing this with $\cN$, we obtain a map $X_\gamma^\nu\to X_\gamma^\nu$. This map is homotopic to a map induced by a $\cM_\gamma$-$\cM_\gamma$-bimodule $\cN''$ that associates to $(y,y')$ the moduli of trajectories from $y$ to $y'$ with respect to continuation data that is equal to $\overline{H}^a$ on the input end, and to $\overline{H}^a+\epsilon$ on the output end. 

This map is homotopic to the identity, hence an isomorphism. More precisely, a $\cM_\gamma$-$\cM_\gamma$-bimodule is simply a smooth manifold with two evaluation maps to $X_\gamma$. By letting $\epsilon$ vary, we see that $\cN''$ is cobordant to a similarly defined bimodule for $\epsilon=0$. But the latter is clearly equal to $X_\gamma$ with evaluation maps given by the identity. The cobordism is compatible with the framings, which tells us the induced maps are homotopic. 

In summary, the composition 
\begin{equation}\label{eq:composelocal}	(X_\gamma)^\nu\to F^{(p-\delta,p+\delta]} HF(\overline H^a_\epsilon,\bS)\xrightarrow{\eqref{eq:localmap}}(X_\gamma)^\nu
\end{equation}
is homotopic to the identity. 

As both sides of \eqref{eq:localmap} are connective, it suffices by the stable Hurewicz theorem to prove the map induced on integral homology is an isomorphism. In other words, if the induced map is an isomorphism, then the cone of \eqref{eq:localmap} is a connective spectrum with vanishing homology; therefore it is zero. 

It follows from \eqref{eq:composelocal} that the composition 
\begin{equation}\label{equation:local-floer-hurewicz}	\widetilde{H}_*(X_\gamma^\nu,\bZ)\to \widetilde{H}_*(F^{(p-\delta,p+\delta]} HF(\overline H^a_\epsilon,\bS),\bZ)\xrightarrow{\eqref{eq:localmap}_*}\widetilde{H}_*(X_\gamma^\nu,\bZ)
\end{equation}
is the identity. In other words, $\widetilde{H}_*(X_\gamma^\nu,\bZ)$ is a direct summand of $\widetilde{H}_*(F^{(p-\delta,p+\delta]} HF(\overline H^a_\epsilon,\bS),\bZ)$. 

Observe that both are isomorphic to a shift of $\bZ\oplus\bZ[1]$ or $\bZ/2\bZ$. Indeed, being a virtual bundle on the circle, $\nu$ is either a trivial bundle, or a M\"obius bundle plus a trivial bundle. In the first case, $(X_\gamma)^\nu$ is a shift of $\Sigma^\infty S^1_+$; in the second case it is a shift of $\Sigma^\infty\bR \bP^2$. The same is true for $F^{(p-\delta,p+\delta]} HF(\overline H^a_\epsilon,\bS)$:  being the realization of a framed flow category $F^{(p-\delta,p+\delta]}\cM_{\overline{H}^a_\epsilon,J}$ with two objects, and two morphisms from one to the other, it is either $\Sigma^\infty S^1_+$ or $\Sigma^\infty\bR \bP^2$, up to a shift by \Cref{lem:onelemma}. This proves the claim about their homology groups. 

The only way for a group isomorphic to $\bZ\oplus\bZ[1]$ or $\bZ/2\bZ$ to be a direct summand of another such group is that they are the same. This implies $\eqref{eq:localmap}_*$ is an isomorphism, finishing the proof.
\end{proof}

Next we turn our attention to the equivariant case. This combines the $X_\gamma$-relative module defined above, with techniques from \cite{cotekartalequivariantfloerhomotopy} which were also used in \Cref{subsec:equivariantPSS}. To conclude the proof, we will use a filtration argument to reduce to \Cref{prop:localeqnoneq}.

We define an $S\times X_\gamma$-relative $S^1$-equivariant framed module $\cN^{eq}$ over $F^{p+\delta}\cM_{\tilde{f}, H^a_\epsilon,J}$. Let $\epsilon:S\to\bR$ be a bounded $S^1$-equivariant function such that $0\leq h\leq \epsilon(s)$ and $\epsilon(b_i)=\epsilon_i$. Choose Floer data parametrized by $S\times \bR\times S^1$ that is 
\begin{itemize}
	\item equivariant with respect to the diagonal $S^1$-action on $S\times S^1$
	\item the Hamiltonian term is non-increasing in the $\bR$ direction 
	\item the Hamiltonian term is equal to $H^a_\epsilon=H+h:S\times S^1\times M\to\bR$ on the input (positive) end
	\item the Hamiltonian term is equal to $H^a+\epsilon(s):S\times M\to\bR$ on the output (negative) end
\end{itemize}

To define $\cN^{eq}$, we associate to a pair $(q,x)\in ob (F^{p+\delta}\cM_{\tilde{f}, H^a_\epsilon,J})$ the moduli of broken pairs $(\beta,u)$ where
\begin{itemize}
	\item $\beta:(-\infty,0]\to\bR$ is a negative, half gradient trajectory of $\tilde{f}$ from $q$
	\item $u:\bR\times S^1\to M$ is a solution from $x$ to some $t\mapsto\gamma(t+\theta)$ to the equation \begin{equation}\label{equation:paramterized-continuation}
		\partial_s u+J_{(\beta(-e^s),s,t)}(\partial_t u- X_{H_{(\beta(-e^s),s,t)}})=0
	\end{equation}
\end{itemize}
In other words, $u$ is a continuation trajectory from $x$ to $\gamma$ ``over $\beta(-e^s):\bR\to S$''. 
\begin{lem}
$\cN^{eq}$ is a module over $F^{p+\delta}\cM_{\tilde{f}, H^a_\epsilon,J}$. 
\end{lem}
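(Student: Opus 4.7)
The plan is to run the standard Floer-theoretic argument for continuation-type moduli spaces in an $S^1$-equivariant and Morse--Bott parametrized setting, closely following the non-equivariant construction from the proof of \Cref{lemma:rel-module-compact} and the equivariant Morse--Bott techniques used in the proof of \Cref{proposition:equivariant-pss}. Four tasks must be carried out: smoothness and transversality of $\cN^{eq}(q,x)$; compactness of its closure; identification of the codimension-one boundary with the module composition maps and with the evaluation stratum; and construction of the framings and of the evaluation map to $S \times X_\gamma$.

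Smoothness and transversality are obtained by perturbing the parametrized Floer data generically while preserving the prescribed asymptotic conditions and $S^1$-equivariance. Since the diagonal $S^1$-action is free on $S\times S^1$ away from the critical loci of $\tilde f$ and the second component of the continuation is non-autonomous, one achieves transversality by the equivariant arguments of \cite{bourgeoisoancea} combined with those of \cite[Sec.\ 4, 7]{cotekartalequivariantfloerhomotopy}. Smooth corner charts on the compactification are constructed as in \cite[Sec.\ 6-7]{cotekartalequivariantfloerhomotopy} and \cite[Sec.\ 6]{largethesis}.

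The main obstacle is compactness. As in the proof of \Cref{lemma:rel-module-compact}, the essential point is to rule out Floer breakings on the output ($\overline H^a$) side: the condition $\partial_s H \le 0$ together with the action gap $\delta$ between distinct orbits of $\overline H^a$ forces any putative intermediate $\overline H^a$-orbit $y$ to satisfy $p \le \cA(y) \le p+\delta$, which by the construction of $\overline H^a$ means $y$ lies in the Morse--Bott $S^1$-family $X_\gamma$ itself, so no genuine $\overline H^a$-breaking of positive dimension occurs. The surviving codimension-one strata are Floer breakings of the continuation cylinder at generators of $F^{p+\delta}\cM_{\tilde f, H^a_\epsilon, J}$ along the input end, together with Morse--Bott breakings of the half-trajectory $\beta$ at critical sets of $\tilde f$; together these realize precisely the $F^{p+\delta}\cM_{\tilde f, H^a_\epsilon, J}$-composition maps in the Morse--Bott module structure.

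Finally, the evaluation map sends $(\beta, u) \mapsto (\beta(0), \gamma(\theta)) \in S \times X_\gamma$, where $\theta$ records the rotation of the asymptotic output of $u$. This is $S^1$-equivariant with respect to the diagonal action, and continuous on the compactification by construction. The framings are assembled from a Morse framing bundle $W_q$ on the $\beta$-factor together with an $S^1$-equivariant index bundle on the $u$-factor coming from autonomous abstract caps over $X_\gamma$, exactly as in the construction of the framing of $\cN$ in the non-equivariant discussion and in \cite[\S 5]{cotekartalequivariantfloerhomotopy}; equivariance of the index bundle is automatic since the cap data is $S^1$-equivariant. These ingredients satisfy the axioms of an $S^1$-equivariant framed $(S \times X_\gamma)$-relative module over $F^{p+\delta}\cM_{\tilde f, H^a_\epsilon, J}$ in the sense of \cite[Def.\ 3.5]{cotekartalequivariantfloerhomotopy}, proving the lemma.
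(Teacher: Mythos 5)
Your proposal is correct and follows essentially the same line as the paper: compactness is obtained by ruling out output-side $\overline H^a$-breakings via the action-decreasing property and the $\delta$-gap, and transversality, gluing, framings, and the evaluation map are deferred to \cite{bourgeoisoancea} and the machinery of \cite{cotekartalequivariantfloerhomotopy}, as in the paper. The paper additionally records explicitly the asymptotics of the parametrized data as $s\to\pm\infty$ (reduction to the standard Cauchy--Riemann equation at the input, exponential convergence to $\overline{H}^a+\epsilon(\beta(0))$ at the output), which you leave implicit; also note the $S^1$-action on $S=S^\infty$ is free everywhere, so the hedge ``away from the critical loci of $\tilde f$'' is unnecessary.
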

\begin{proof}
First of all, observe that for $s \gg 1$, $\beta(s)$ is contained in a slice of the $S^1$-action and $(H_{(\beta(-e^s),s, t)}, J_{(\beta(-e^s),s, t)})$ are independent of $s$ on this slice for $s \gg 1$. This follows immediately from the construction of $H^a_\epsilon, J$ and of the $S^1$-equivariant metric on $S$, all of which is explained in \Cref{subsubsection:data-borel} and \Cref{subsubsection:borel-jingyu-data}. Hence \eqref{equation:paramterized-continuation} reduces to the standard Cauchy--Riemann equation for $s \gg 1$. 

For $s \ll -1$, the Hamiltonian term of the pair $(H_{(\beta(-e^s),s, t)}, J_{(\beta(-e^s),s, t)})$ converges exponentially in $s$ to $H^a_{(\beta(0), \epsilon(\beta(0)),t)}=\overline{H}^a + \epsilon(\beta(0))$, and similarly with its $J$-term.
%
Similarly to the proof of \Cref{lemma:rel-module-compact}, we observe that there can be no breaking as $s \to -\infty$ (output). Indeed, any such breaking would cause the appearance of a trajectory from an orbit of $H^a_\epsilon(b,\cdot,\cdot)$ of action less than $p+\delta$ to an orbit of $\overline{H}^a + \epsilon(\beta(0))$ of higher action. This is impossible due to our assumption that the Hamiltonian term has non-increasing derivative in the $\bR$-direction. 

We do not spell out the details of transversality and gluing, which follow from similar considerations as for $\cN_S^{PSS}$ in \Cref{subsec:equivariantPSS}. 
\end{proof}
We define the evaluation map  by 
\begin{equation}
	\cN^{eq}(q,x)\to S\times X_\gamma\atop (\beta,u)\mapsto (\beta(0),\gamma(\theta))
\end{equation}
where $\gamma(t+\theta)$ is the output asymptotic of $u$. It carries an equivariant framing relative to the pull-back of the virtual equivariant bundle $\nu$ to $S\times X_\gamma$, which we still denote by $\nu$. As a result, we have an $S^1$-equivariant map
\begin{equation}\label{eq:thisequivmap}
	F^{p+\delta}HF_S(H^a_\epsilon,\bS):=|F^{p+\delta}\cM_{\tilde f, H^a_\epsilon,J}|\to \Sigma^\infty S_+\wedge (X_\gamma)^\nu\simeq (X_\gamma)^\nu.
\end{equation} As before, $\cN^{eq}$ is empty over $ob(F^{p-\delta}\cM_{\tilde f,\overline H^a_\epsilon,J})$; therefore, \eqref{eq:thisequivmap} factors through \begin{equation}\label{eq:localequivmap}
	F^{(p-\delta,p+\delta]} HF_S(\overline H^a_\epsilon,\bS)\to  \Sigma^\infty S_+\wedge (X_\gamma)^\nu\simeq (X_\gamma)^\nu.
\end{equation} 
\begin{prop}\label{prop:localequiv}
\eqref{eq:localequivmap} is an equivalence. 
\end{prop}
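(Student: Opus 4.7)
The plan is to reduce to \Cref{prop:localeqnoneq} by filtering both sides of \eqref{eq:localequivmap} using the Morse filtration of $\tilde{f}$ on $S = S^\infty$. First, I would equip both sides with filtrations indexed by $n \geq 0$. On the left-hand side, let $F^{(p-\delta,p+\delta]} HF_{S^{2n+1}}(H^a_\epsilon, \bS)$ denote the geometric realization of the flow subcategory of $F^{(p-\delta,p+\delta]} \cM_{\tilde{f}, H^a_\epsilon, J}$ spanned by objects $(a, x)$ with $a$ lying in one of the first $n+1$ critical circles of $\tilde{f}$ (equivalently, those parametrized by $S^{2n+1} \subset S$). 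Since gradient trajectories of $\tilde{f}$ preserve the filtration of $S$ by $S^{2n+1}$, this is a well-defined subcategory and its colimit as $n \to \infty$ recovers $F^{(p-\delta,p+\delta]} HF_S(H^a_\epsilon, \bS)$. On the right-hand side, use the evident filtration of $\Sigma^\infty S_+ \wedge (X_\gamma)^\nu$ by $\Sigma^\infty S^{2n+1}_+ \wedge (X_\gamma)^\nu$. The moduli spaces defining $\cN^{eq}$ respect these filtrations, since the half gradient trajectories of $\tilde{f}$ appearing in their definition stay within $S^{2n+1}$ once their endpoints do.

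Next, I would identify the induced maps on successive cofibers with non-equivariant maps to which \Cref{prop:localeqnoneq} applies. The cofiber $F^{(p-\delta,p+\delta]} HF_{S^{2n+1}}/F^{(p-\delta,p+\delta]} HF_{S^{2n-1}}$ is built from the flow subcategory obtained by adjoining the single critical $S^1$-orbit of $\tilde{f}$ of index $2n$, together with the resulting parametrized Floer trajectories. By the Morse--Bott realization techniques of \cite[\S4]{cotekartalequivariantfloerhomotopy}, this cofiber can be written as an appropriate shift and Thom-twist of the non-equivariant local Floer spectrum $F^{(p-\delta,p+\delta]} HF(\overline{H}^a_\epsilon, \bS)$. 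The corresponding cofiber on the right, $\operatorname{cofib}\bigl(\Sigma^\infty S^{2n-1}_+ \wedge (X_\gamma)^\nu \to \Sigma^\infty S^{2n+1}_+ \wedge (X_\gamma)^\nu\bigr)$, is the analogous shift and twist of $(X_\gamma)^\nu$. Under these identifications, the map induced on cofibers is a shift and twist of the non-equivariant map \eqref{eq:localmap}, which is an equivalence by \Cref{prop:localeqnoneq}. Passing to the colimit over $n$ then yields the desired equivalence in $\Sp^{BS^1}$.

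The main obstacle is the bookkeeping required to identify the cofibers and verify that the equivariant map restricts, under these identifications, to the non-equivariant one. This entails matching the Thom-twists coming from the normal bundles of the critical circles of $\tilde{f}$ on both sides, and carefully tracking how the equivariant framings on the parametrized moduli spaces descend to the non-equivariant framings on the subquotients. Once these identifications are in place, the proof proceeds cleanly one filtration level at a time, with \Cref{prop:localeqnoneq} applied as a black box at each level.
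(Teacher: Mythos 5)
Your proposal takes essentially the same approach as the paper's proof: filter both sides by the Morse filtration of $\tilde{f}$ on $S = S^\infty$, identify each successive subquotient on the left with a Thom-twist $X_i^{V_i} \wedge F^{(p-\delta,p+\delta]}HF(\overline{H}^a_\epsilon,\bS)$ and each subquotient on the right with $X_i^{V_i} \wedge X_\gamma^\nu$, observe that the induced map on subquotients is $\eqref{eq:localmap}$ smashed with $X_i^{V_i}$, apply \Cref{prop:localeqnoneq}, and pass to the colimit. One small imprecision worth noting: you claim the gradient trajectories of $\tilde{f}$ stay inside the spheres $S^{2n+1}$, but this is not automatic for the generic metric needed for transversality; the cleaner justification (as in the paper) is simply that trajectories strictly decrease $\tilde{f}$, so the span of objects with $\tilde{f}$-value at most $r_i$ is a flow subcategory regardless of whether trajectories remain in $S^{2i+1}$.
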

To prove this, we reduce to \Cref{prop:localeqnoneq} by using a filtration argument.
\begin{proof}
$F^{(p-\delta,p+\delta]} HF_S(\overline H^a_\epsilon,\bS)$ is given by the geometric realization of $\cM_{loc}:=F^{(p-\delta,p+\delta]} \cM_{\tilde{f}, H^a_\epsilon,J}$, which carries another filtration by the value of $\tilde{f}$. The category $F^r\cM_{loc}=F^rF^{(p-\delta,p+\delta]} \cM_{\tilde{f}, H^a_\epsilon,J}$ jumps when $r_i=\tilde{f}(b_i)$ for some $i$, i.e.\ at the critical values of $\tilde{f}$. Let $X_i$ denote the $S^1$-orbit of $b_i$, i.e.\ the critical set of $\tilde{f}$ of index $2i$. Observe that the quotient category $F^{r_i}\cM_{loc}/F^{r_{i-1}}\cM_{loc}$ can be identified with the Morse--Bott flow category given by taking the product of $X_i$ with the ordinary flow category $F^{(p-\delta,p+\delta]}\cM_{\overline{H}^a_{\epsilon_i},J}$. Here $F^{(p-\delta,p+\delta]}\cM_{\overline{H}^a_{\epsilon_i},J}$ is the flow category corresponding to restriction of the $S$-parametric Floer data to $b_i$. In other words, we take products of every object and the morphism space with $X_i$. The framing bundles on $X_i\times F^{(p-\delta,p+\delta]}\cM_{\overline{H}^a_{\epsilon_i},J}$ are the sum of these for $F^{(p-\delta,p+\delta]}\cM_{\overline{H}^a_{\epsilon_i},J}$ plus the descending bundle $V_i$ over $X_i$. Its realization is the same as $X_i^{V_i}\wedge F^{(p-\delta,p+\delta]}HF(\overline{H}^a_\epsilon,\bS)$. 

Similarly, the target $S\times X_\gamma$ is filtered by the value of $\tilde{f}$, and jumps in the homotopy type only occur at the critical points $r_i$. Indeed, this filtration depends only on the $S$-component, and $F^{r_i}\Sigma^\infty S_+/F^{r_{i-1}}\Sigma^\infty S_+$ is equivalent to $X_i^{V_i}$ (see \cite[Sec.\ 4]{cotekartalequivariantfloerhomotopy} for more details). Therefore we have a map
\begin{equation}\label{eq:somelongmap}
X_i^{V_i}\wedge F^{(p-\delta,p+\delta]}HF(\overline{H}^a_\epsilon,\bS) \simeq |F^{r_i}\cM_{loc}|/|F^{r_{i-1}}\cM_{loc}|\to\atop 	F^{r_i}\big(\Sigma^\infty S_+\wedge (X_\gamma)^\nu\big)\big/ 	F^{r_{i-1}}\big(\Sigma^\infty S_+\wedge (X_\gamma)^\nu\big)\simeq X_i^{V_i}\wedge X_\gamma^\nu
\end{equation}
and it is not difficult to see that this map matches \eqref{eq:localmap} smashed with $X_i^{V_i}$. Hence, by \Cref{prop:localeqnoneq}, the map \eqref{eq:somelongmap} is an equivalence for every $i$. As a result, by induction, the map
\begin{equation}
	F^{r_i}HF_S(H^a_\epsilon)\simeq |F^{r_i}\cM_{loc}|\to F^{r_i} \big(\Sigma^\infty S_+\wedge (X_\gamma)^\nu\big)
\end{equation}
induced by $\cN^{eq}$ is an isomorphism for every $r_i$. This implies the same for their colimit as $i\to\infty$, which concludes the proof. 
\end{proof}

\bibliographystyle{alpha}
\bibliography{biblio}

\end{document}